\documentclass[12pt]{amsart}
\usepackage[margin=2.0cm]{geometry}

\usepackage[backend=biber,maxnames=50,
  giveninits=true]{biblatex}
\AtEveryBibitem{%
	\clearfield{number}}

\usepackage{orcidlink}

\usepackage{amssymb}
\usepackage{amsfonts}
\usepackage{mathrsfs}
\usepackage{graphicx}
\usepackage{mathtools}

\usepackage{float}

\usepackage{hyperref}
\hypersetup{colorlinks,linkcolor={blue!75!black},citecolor={blue!75!black},urlcolor={blue!75!black}}

\newcommand{\R}{{\mathbb R}}
\newcommand{\N}{{\mathbb N}}

\newtheorem{theorem}{Theorem}[section]

\newtheorem{corollary}[theorem]{Corollary}
\newtheorem{lemma}[theorem]{Lemma}
\newtheorem{proposition}[theorem]{Proposition}
\newtheorem{conjecture}[theorem]{Conjecture}

\theoremstyle{definition}
\newtheorem{definition}[theorem]{Definition}

\newtheorem{question}[theorem]{Question}

\theoremstyle{remark}
\newtheorem{remark}[theorem]{Remark}

\DeclareMathOperator{\diam}{diam}
\DeclareMathOperator{\conv}{conv}
\DeclareMathOperator{\pos}{pos}

\DeclareMathOperator{\vol}{vol}
\DeclareMathOperator{\GL}{GL}
\DeclareMathOperator{\SL}{SL}

\let\emptyset\varnothing

\usepackage{enumitem}

\newcommand{\Knn}{\mathcal{K}_n^n}
\newcommand{\Ktt}{\mathcal{K}_2^2}
\newcommand{\transp}{{\mkern-1.5mu\mathsf{T}}}

\DeclarePairedDelimiter\scpr{\langle}{\rangle}
\newcommand{\sphere}{\mathbb{S}^{n-1}}
\newcommand{\sphereone}{\mathbb{S}^{1}}

\newcommand{\Sulambda}{S_u^\lambda}
\newcommand{\norm}[1]{\|#1\|}

\DeclareMathOperator{\linh}{span}
\newcommand{\Prob}{\mathbb{P}}
\newcommand{\dint}{\,\mathrm{d}}
\DeclareMathOperator{\aff}{aff}
\DeclareMathOperator{\interior}{int}
\DeclareMathOperator{\relint}{relint}
\DeclareMathOperator{\SO}{SO}
\DeclareMathOperator{\Id}{Id}

\newcommand{\cdotbox}{\hspace{0.4mm}\cdot\hspace{0.4mm}}

\newcommand{\MID}{\,\middle|\,}
\newcommand{\E}{\mathbb{E}} 
\newcommand{\dotnorm}{\|\cdot\|}
\newcommand{\dId}{d_{\{\Id\}}}
\newcommand{\deltaId}{\delta_{\{\Id\}}}
\DeclareMathOperator{\Orthogonal}{O}

\DeclareRobustCommand{\One}{\text{\usefont{U}{bbold}{m}{n}1}}

\DeclareMathOperator{\bd}{bd}
\newcommand{\goestoinfty}[1]{\underset{#1 \rightarrow \infty}{\longrightarrow}}

\title{Random convergence of generalized Steiner processes}

 \author[C. Kipp]{Christian Kipp$^{\orcidlink{0009-0001-6029-6902}}$}

\address{Technion -- Israel Institute of Technology, Department of Mathematics, Haifa 32000, Israel}
\email{ckipp@campus.technion.ac.il}

\author[\'A. Sagmeister]{\'Ad\'am Sagmeister$^{\orcidlink{0000-0002-6863-6481}}$}

\address{\parbox{\linewidth}{Bolyai Institute, University of Szeged, Aradi v\'ertan\'uk tere 1, 6720 Szeged, Hungary\\HUN-REN Alfr\'ed R\'enyi Institute of Mathematics, Re\'altanoda u. 13-15, 1053 Budapest, Hungary}}

\email{sagmeister@server.math.u-szeged.hu, sagmeister.adam@gmail.com}

\subjclass[2010]{52A05 (primary), 60J05 (secondary)}
\keywords{Steiner symmetrization, shaking, Markov processes}

\mathtoolsset{showonlyrefs=true}

\begin{document}

\begin{abstract}
    It is well-known that a random Steiner process converges almost surely to a Euclidean ball. On the other hand, as we show in this article, a planar random shaking process converges almost surely to a triangle (in the Banach--Mazur distance). Asking whether these phenomena are generic or exceptional, we study random iterations of an operation called $\lambda$-Steiner symmetrization, which includes Steiner symmetrization and shaking as special cases. We establish abstract criteria for almost sure convergence and divergence of such processes, obtaining criteria for many other symmetrization operations from convex geometry as a byproduct. Based on a joint characterization of ellipses and triangles, we give a full resolution to our question in the planar case.
\end{abstract}

\maketitle

\setcounter{tocdepth}{1}

\tableofcontents

\section{Introduction} \label{section:introduction}

For the purposes of this article, we define a \emph{convex body} to be a convex compact set with non-empty interior; the set of convex bodies in $\R^n$ is denoted by $\Knn$. Throughout the article, we do not really distinguish between a convex body and its translates. We equip $\Knn$ with the \emph{geometric distance}
    \begin{equation}
        d(K,L)\coloneqq \inf \{ \lambda\mu\mid \lambda,\mu\geq 1,\;\exists y,z \in \R^n\colon \lambda (K-y)\supset L-z \supset \mu^{-1} (K-y)\},
    \end{equation}
and when we refer to topological concepts like continuity, the underlying topology is generated by $\delta \coloneqq \log d$, which is a metric on $\Knn$ modulo translations. By a \emph{geometric functional}, we mean a translation-invariant, continuous map $\Knn \rightarrow \R$. %
In order to prove a geometric inequality for such a functional (and to identify the extremal bodies), one often applies a suitable \emph{operation}, i.e., a family of maps $\Knn \rightarrow \Knn$.

The classical example of an operation is \emph{Steiner symmetrization}, which was developed by Steiner to solve the classical isoperimetric problem. Let $K \in \Knn$ and let $\pi_H$ be the orthogonal projection onto a hyperplane $H$. For every direction $u \in \sphere$, there exists a unique pair of convex functions $f_{K,u},g_{K,u} \colon \pi_{u^\bot}(K) \rightarrow \R$ with
\begin{equation}
    K = \{x \in \R^n \mid \pi_{u^\bot}(x) \in \pi_{u^\bot}(K) \text{ and } f_{K,u}(\pi_{u^\bot}(x)) \leq \scpr{x,u} \leq -g_{K,u}(\pi_{u^\bot}(x))\}.
\end{equation}
Conversely, the convex body $K$ is uniquely determined by $\pi_{u^\bot}(K)$ and the pair $f_{K,u},g_{K,u}$. The \emph{Steiner symmetral of $K$ in direction $u$} can be defined as the unique convex body $S_uK$ with
\begin{equation} \label{eq_def_steiner}
    \pi_{u^\bot}(S_uK) = \pi_{u^\bot}(K) \quad \text{and} \quad f_{S_uK,u}=g_{S_uK,u}=\tfrac12 (f_{K,u} + g_{K,u}).
\end{equation}
Intuitively speaking, $S_uK$ is obtained from $K$ by moving all chords in direction $u$ in their respective affine hulls until they are precisely cut in half by the hyperplane $u^\bot$. The relevance of this operation stems from the fact that numerous geometric functionals $\varphi$ are \emph{monotone} with respect to Steiner symmetrization, i.e., they satisfy $\varphi(S_uK)\leq\varphi(K)$ for all $u$ and $K$, and that any convex body $K \in \Knn$ can be brought arbitrarily close to a Euclidean ball by applying a suitable sequence of Steiner symmetrizations. This provides a powerful generic proof strategy to establish that a geometric functional $\varphi$ is minimized by Euclidean balls, and this method has been used to prove numerous important isoperimetric-type inequalities; for examples, we refer to \cite[Ch.~9]{gar06}, \cite[Ch.~9]{gru07} and the references therein. Considering the remarkable properties of Steiner symmetrization, it is natural to ask to what extent these properties are coincidental or, alternatively, reflect deeper principles. A natural approach to this question is to compare Steiner symmetrization with other similar operations, as done by Bianchi--Gardner--Gronchi \cite{bgg17,bgg22} and Ulivelli \cite{uli23}. %

Motivated by Sylvester's four-point problem, a closely related operation was introduced by Blaschke in \cite{bla17}. The \emph{shaking of $K$ in direction $u$} can be defined as the unique convex body $B_uK$ with
\begin{equation} \label{eq_def_blaschke}
    \pi_{u^\bot}(B_uK) = \pi_{u^\bot}(K), \quad f_{B_uK,u}=0 \quad \text{and} \quad g_{B_uK,u}=f_{K,u} + g_{K,u}.
\end{equation}
The motivation to introduce shaking was Blaschke's observation that the expected volume of a random triangle in a planar convex body does not decrease under shaking, and that any planar convex body $K \in \Ktt$ can be brought arbitrarily close to a triangle by applying a suitable sequence of shakings. This resolves Sylvester's four-point problem in the plane, identifying triangles as the maximizers, by an argument that exhibits a remarkable duality to the technique of Steiner symmetrization.
In higher dimensions, it was shown by Campi--Colesanti--Gronchi that any $K \in \Knn$ can be brought arbitrarily close to any simplex with non-obtuse angles \cite{ccg01} by applying a sequence of shakings. The analogy with Steiner symmetrization is not complete: Since an $n$-dimensional simplex fails to be invariant under shaking for $n\geq 3$, any functional that is monotone under shaking cannot be uniquely maximized by simplices in dimension $n \geq 3$, and it seems  like such functionals are either trivial or rather exotic. On the other hand, shaking in well-chosen directions can be helpful. Overall, the method of shaking and discrete variants of it have been useful in a handful of further contexts \cite{sch73,kle79,bl91,uhr94,bb01,fl22}.

A goal of this article is to study Steiner symmetrization and shaking in a unified framework. 
In view of the similarity of the two definitions \eqref{eq_def_steiner} and \eqref{eq_def_blaschke}, it is natural to construe the two operations as the extreme points of a 1-parameter family of operations, which is introduced in the following definition.

\begin{definition} \label{definition:lambda_steiner_symmetrization}
    Let $K \subset \R^n$ be a convex body and let $\lambda \in [0,\frac12]$. We define the \emph{$\lambda$-Steiner symmetral of $K$ in direction $u$} as the unique convex body $\Sulambda K$ with
    \begin{equation} \label{eq_def_lambda_steiner}
    \pi_{u^\bot}(\Sulambda K) = \pi_{u^\bot}(K), \quad f_{\Sulambda K,u}= \lambda (f_{K,u} + g_{K,u}) \quad \text{and} \quad g_{\Sulambda K,u}=(1-\lambda)(f_{K,u} + g_{K,u}).
    \end{equation}
\end{definition}
Broadly speaking, our goal is to understand which of the  features of Steiner symmetrization and shaking are generic in the sense that they are satisfied by $\lambda$-Steiner symmetrization for generic values of $\lambda$. To make this question more concrete, we focus on the convergence behavior of $\lambda$-Steiner processes, i.e., sequences of convex bodies that result from iterated  applications of $\lambda$-Steiner symmetrization. If the directions $u_1,u_2,\dots$ are chosen independently from the rotation-invariant probability measure on $\sphere$, we call the resulting random sequence $(S^\lambda_{u_i}\cdots S^\lambda_{u_1} K)_{i \in \N}$ the \emph{random $\lambda$-Steiner process starting at $K$}. In the special cases $\lambda=\frac12$ and $\lambda=0$, we call the random sequence the \emph{random Steiner process} and the \emph{random shaking process starting at $K$}, respectively. Our starting point is the fact that the random Steiner process starting at $K$ converges to a Euclidean ball almost surely, which was shown by Mani-Levitska \cite{ml86} (see \cite{vs06crit,v13,cd14,bbgv12,bf13} for generalizations and variations of this result). In view of this fact, we ask the following, still somewhat broad question:

\begin{question} \label{question:broad_question}
    Let $K \in \Knn$ and $\lambda \in [0,\frac12]$. What can be said about convergence of the random $\lambda$-Steiner process starting at $K$?
\end{question}

Considering that any planar convex body can be brought arbitrarily close to a triangle by a sequence of shakings, and assuming that there is some common principle that connects Steiner symmetrization and shaking, %
one might conjecture that any planar random shaking process converges almost surely to a triangle. However, although shaking a triangle yields another triangle, the second triangle is generally not a translate of the first. This suggests to consider a notion of convergence that does not distinguish between the two triangles. Indeed, we will see later that the random shaking process converges almost surely to the triangle in the Banach--Mazur distance (this follows from Corollary~\ref{corollary:random_shaking_process_converges_to_any_triangle}). More generally, considering any closed subgroup $G$ of $\GL(n,\R)$, we define $K,L\in \Knn$ to be \emph{$G$-equivalent} and write $K \sim_G L$ if there exists an $A \in G$ such that $AK$ is a translate of $L$. We denote the equivalence class of $K$ by $[K]_G$ and say that a sequence of convex bodies $K_1,K_2,\dots \in \Knn$ \emph{converges modulo $G$} if it converges in the topological quotient space $(\Knn,\log d)/\mathord\sim_G$. In light of this terminology, we can rephrase Question \ref{question:broad_question} more precisely:

\begin{question} \label{question:precise_question}
    Let $K \in \Knn$, $\lambda \in [0,\frac12]$ and let $G \leq \GL(n,\R)$ be closed. What is the probability that the random $\lambda$-Steiner process starting at $K$ converges modulo $G$?
\end{question}

In the first part of this article, we seek to establish criteria for almost sure convergence and divergence of the random $\lambda$-Steiner process. Since the arguments we employ are mostly topological, it is convenient to study the problem in a somewhat more abstract setting. For our purposes, it is practical to introduce the notion of a \emph{Steiner operation}, by which we mean a family of translation-class preserving, inclusion-monotone and positively homogeneous maps $\Knn \rightarrow \Knn$, which is parameterized by $\Orthogonal(n)$ in a rotation-equivariant way (see Definition~\ref{definition:steiner_operation} for a precise definition). This notion encompasses $\lambda$-Steiner symmetrization as well as Schwarz symmetrization, Minkowski symmetrization and other well-known symmetrization operations in convex geometry (see Remark~\ref{remark:examples_of_steiner_operations}). Basic properties of these operations are discussed in Section~\ref{section:basic_properties_of_steiner_operations}.

If $\Sigma$ is a Steiner operation, we denote the \emph{forward orbit of $K \in \Knn$ under $\Sigma$} by
\begin{equation}
    \Sigma^*K=\{\Sigma_{U_k}\cdots\Sigma_{U_1}K\mid k \in \N, U_1, \dots,U_k \in\Orthogonal(n)\}
\end{equation}
and its topological closure by $\overline{\Sigma^*K}$. We refer to a sequence of the form $(\Sigma_{U_i}\cdots\Sigma_{U_1}K)_{i\in \N}$ as a \emph{generalized Steiner process}; if the matrices $U_i$ are chosen independently according to the Haar probability measure on $\Orthogonal(n)$, we call the resulting random sequence the \emph{random $\Sigma$-process starting at $K \in \Knn$} (see Definition~\ref{definition:random_sigma_process_starting_at_K}).

In Section~\ref{section:random_convergence_of_generalized_steiner_processes}, we state and prove criteria for almost sure convergence and divergence of generalized Steiner processes, obtaining conditions for $\lambda$-Steiner processes as a special case. The precise conditions require notions that we defer to Section~\ref{section:basic_properties_of_steiner_operations}, but let us shortly look at some deterministic special cases in order to illustrate how the conditions relate to the examples of random Steiner processes and random planar shaking processes. For this, we need two definitions.

The first definition generalizes the special roles that Euclidean balls and non-obtuse simplices play for Steiner symmetrization and shaking, respectively.

\begin{definition}
    Let $\Sigma$ be a Steiner operation. We say that $L^*\in \Knn$ is \emph{$\Sigma$-universal} if for every $K\in \Knn$ there exists a sequence $U_1,U_2,\dots \in \Orthogonal(n)$ and a number $r>0$ with
    \begin{equation}
        \delta(\Sigma_{U_i}\cdots\Sigma_{U_1}K,rL^*) \goestoinfty{i} 0.
    \end{equation}
\end{definition}

We have already mentioned that shaking a triangle yields another triangle, and it is obvious that the Steiner symmetral of a Euclidean ball $B$ is a translate of $B$. The second definition provides a suitable notion of invariance for general Steiner operations.

\begin{definition} \label{definition:sigma_invariant}
    Let $\Sigma$ be a Steiner operation and let $G\leq\GL(n,\R)$ be closed. We say that $L^* \in \Knn$ is \emph{$\Sigma$-invariant modulo $G$} if $\Sigma_{U_k}\cdots\Sigma_{U_1}L^*\in [L^*]_G$ for every finite sequence $U_1,\dots,U_k\in \Orthogonal(n)$. 
\end{definition}

With these definitions at hand, we can now state the deterministic special cases of our more general criteria. In both criteria, $G$ is an arbitrary closed subgroup of $\GL(n,\R)$. The first criterion concerns almost sure convergence of generalized Steiner processes.%
\begin{theorem} \label{theorem:sufficient_condition_for_convergence_simple_version}
    Let $\Sigma$ be a Steiner operation and let $K\in\Knn$ such that $\overline{\Sigma^*K}$ is compact. If there exists a $\Sigma$-universal body $L^*\in \Knn$ that is $\Sigma$-invariant modulo $G$, then the random $\Sigma$-process $\Sigma_{U_k}\cdots\Sigma_{U_1}K$, $k \in \N$, converges modulo $G$ almost surely and its limit is almost surely $G$-equivalent to a homothet of $L^*$.
\end{theorem}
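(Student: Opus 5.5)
The plan is a Borel--Cantelli (or Lévy zero--one) argument that uses the universality of $L^*$ uniformly on the compact set $\overline{\Sigma^*K}$, combined with a "trapping" property of neighbourhoods of $[rL^*]_G$.

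\textbf{Step 1 (uniform approximation).} We use universality to find, for every $\varepsilon>0$ and every $M \in \overline{\Sigma^*K}$, a finite word $U_1,\dots,U_k$ and $r>0$ with $\delta(\Sigma_{U_k}\cdots\Sigma_{U_1}M, rL^*)<\varepsilon$. The maps $\Sigma_U$ are continuous in $(U,M)$; this should be among the basic properties of Steiner operations in Section~\ref{section:basic_properties_of_steiner_operations}, and it uses monotonicity and homogeneity together with the fact that $\delta$-balls are defined by sandwiching. Hence the same inequality holds with $2\varepsilon$ for all $(U_1',\dots,U_k')$ in an open neighbourhood of $(U_1,\dots,U_k)$ in $\Orthogonal(n)^k$, and for all $M'$ in a neighbourhood of $M$. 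Covering the compact set $\overline{\Sigma^*K}$ by finitely many such neighbourhoods gives a uniform length $k_\varepsilon$ and probability $p_\varepsilon>0$. Concretely: for every $M$ in the orbit closure, $k_\varepsilon$ further independent Haar steps bring $M$ within $2\varepsilon$ of some homothet of $L^*$ with probability at least $p_\varepsilon$. Words of different lengths are padded by using invariance of $L^*$ modulo $G$ together with continuity. Padding also needs $\delta$-closeness to $[rL^*]_G$ to be preserved, which is Step 2.

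\textbf{Step 2 (trapping).} We show that once a body is within $\eta$ of $[rL^*]_G$ in the quotient metric, it stays within $\eta$ of it forever. If $\lambda(M-y)\supset A(rL^*)-z\supset \mu^{-1}(M-y)$ with $A\in G$, then applying $\Sigma_U$, which is monotone, homogeneous and translation-class preserving, gives the same sandwich between $\Sigma_U M$ and $\Sigma_U(A rL^*)$. Here one must be careful: $\Sigma_U(ArL^*)$ must again lie in $[rL^*]_G$. Invariance modulo $G$ gives this for $\Sigma$ applied to $L^*$ itself. For $AL^*$ with $A\in G$ I would instead sandwich $M$ directly against an element of the forward orbit of $rL^*$, which all lie in $[rL^*]_G$ by invariance. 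So the quantity $\delta_G(M_k):=\inf\{\delta(M_k,L)\mid L\in\Sigma^*(r L^*)\cup\{rL^*\}\}$ is nonincreasing once we track the orbit of a fixed $rL^*$.

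This is the main obstacle. The quotient distance to $[rL^*]_G$ need not be monotone, because $G$ may not commute with $\Sigma$. The fix is to work with distance to the forward orbit of the specific homothet reached in Step 1, and to note that the $r$ is then fixed. If the distance to that orbit is nonincreasing and that orbit lies in one $G$-class, then the distance modulo $G$ to $[rL^*]_G$ stays $\le 2\varepsilon$.

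\textbf{Step 3 (conclusion).} Divide time into blocks of length $k_\varepsilon$. By the Markov property and Step 1, each block independently succeeds with probability $\ge p_\varepsilon$, so almost surely some block succeeds. After that, by Step 2, all subsequent bodies lie within $2\varepsilon$ modulo $G$ of $[r_\varepsilon L^*]_G$. Intersect over $\varepsilon=1/m$. The radii $r_{1/m}$ are eventually comparable, since $\delta(r L^*, r'L^*)$ modulo $G$ is bounded below by something like $|\log(r/r')|$ when volumes or determinants are controlled (if $G$ contains dilations, homothety is absorbed anyway). By compactness they have a limit point $r$, and the sequence is Cauchy in the quotient, converging to $[rL^*]_G$. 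This yields convergence modulo $G$ almost surely, with limit $G$-equivalent to a homothet of $L^*$.
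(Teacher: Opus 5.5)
Your route is genuinely different from the paper's, and Steps~1, 2 (as you corrected it) and the block argument of Step~3 are sound.

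\textbf{How the paper argues.} The paper does not argue directly. It notes that compactness of $\overline{\Sigma^*K}$ gives compact recurrence, and that universality plus invariance makes every body $\Sigma$-tethered. It then invokes Theorem~\ref{theorem:characterization_tethered}, which is a Burchard--Fortier summability argument for the truncated distance to $\mathcal I_G(\Sigma)$ along stopping times. Finally it identifies the limit using closedness of $\mathcal I_G(\Sigma)$ (Lemma~\ref{lemma:set_of_invariant_bodies_is_closed}) and Lemma~\ref{lemma:sigma_invariance_and_sigma_universality}.

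\textbf{How your route compares.} You run a repeated-trials argument with a uniform hitting probability and an absorbing neighbourhood. This is more elementary and self-contained in the precompact case. The paper's route is built to work under mere compact recurrence and tetheredness, which it needs later for shaking. Your corrected Step~2 is essentially the second part of Lemma~\ref{lemma:steiner_operations_are_non-expansive}, and you are right that the $\delta_G$-distance to $[rL^*]_G$ itself need not be monotone.

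Two small points. First, continuity in $U$ comes from rotation-equivariance, not from monotonicity and homogeneity (Lemma~\ref{lemma:steiner_operations_continuity}); joint continuity then follows because non-expansiveness is uniform in $U$. Second, the blocks are not independent, but the Markov property gives the needed conditional bound $1-p_\varepsilon$ per block. Convergence modulo $G$ itself already follows from your Cauchy estimate and completeness of $\delta_G$.

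\textbf{The gap: identifying the limit.} The step that fails as written is showing the limit is $G$-equivalent to a homothet of $L^*$. Your claimed lower bound of $\delta_G(rL^*,r'L^*)$ by a multiple of $|\log(r/r')|$ is false for general closed $G$. Take $G=\{\operatorname{diag}(e^k,e^{\sqrt2\,l})\mid k,l\in\mathbb Z\}$, which contains no nontrivial dilation and does not preserve volume. Then $\delta_G(L^*,e^{-k}L^*)\to0$ along integers $k$ with $\sqrt2\,l-k\to0$. So mutual closeness of the classes $[r_mL^*]_G$ gives no control on $r_m$, and nothing in your argument rules out $r_m\to0$ or $r_m\to\infty$. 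In that case the limit class is not automatically of the form $[rL^*]_G$; for instance, $\mathbb R_{>0}G$ is not closed in this example. Hence ``by compactness they have a limit point $r$'' is unsupported.

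\textbf{Fix within your setup.} At the success time the body is within $2\varepsilon\le 2$ of $r_mL^*$ in the translation metric $\delta$, not merely modulo $G$. It also lies in $\overline{\Sigma^*K}$, whose volumes lie in some $[v_-,v_+]\subset(0,\infty)$ by Lemma~\ref{lemma:characterization_of_compactness}. Since $\delta(A,B)<\eta$ forces $e^{-n\eta}\vol A\le\vol B\le e^{n\eta}\vol A$, you get $r_m^n\vol L^*\in[e^{-2n}v_-,e^{2n}v_+]$. So a subsequence satisfies $r_{m_j}\to r\in(0,\infty)$, and $\delta_G(K_t,rL^*)\le 2/m_j+|\log(r_{m_j}/r)|$ for $t\ge T_{m_j}$.

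\textbf{Alternative fix, as in the paper.} Take a $\delta$-accumulation point $K'$ of $(K_t)$ in $\overline{\Sigma^*K}$. It satisfies $\delta(K',\mathcal I_G(\Sigma))=0$, so $K'\in\mathcal I_G(\Sigma)$. Lemma~\ref{lemma:sigma_invariance_and_sigma_universality} then places it in $[rL^*]_G$ without any control of radii.
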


In this article, Theorem~\ref{theorem:sufficient_condition_for_convergence_simple_version} and its more general version (Theorem~\ref{theorem:characterization_tethered}) serve as tools to solve Question~\ref{question:precise_question}, but the results can be detached from this purpose. From a broader perspective, Theorem~\ref{theorem:sufficient_condition_for_convergence_simple_version} shows a deeper principle behind Mani-Levitska's result \cite{ml86}, and extends it to many other symmetrization operations (see Corollary~\ref{corollary:convergence_to_ball_for_examples}).

The second criterion provides a deterministic sufficient condition for almost sure divergence of certain random $\Sigma$-processes.

\begin{theorem} \label{theorem:sufficient_condition_for_divergence_simple_version}
    Let $\Sigma$ be a Steiner operation and let $K\in\Knn$ such that $\overline{\Sigma^*K}$ is compact. If there exists no $\Sigma$-invariant body $L^*\in \Knn$ modulo $G$, then the random $\Sigma$-process $\Sigma_{U_k}\cdots\Sigma_{U_1}K$, $k \in \N$, diverges modulo $G$ almost surely.
\end{theorem}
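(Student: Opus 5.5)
The idea is to argue by contradiction. Suppose that, with positive probability, the random $\Sigma$-process $K_k=\Sigma_{U_k}\cdots\Sigma_{U_1}K$ converges modulo $G$. Let $\ell$ be a limit class and $L$ a representative. I will show that $L$ (or a suitable limit derived from it) is $\Sigma$-invariant modulo $G$, which contradicts the hypothesis.

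The first step is compactness and normalization. Since $\overline{\Sigma^*K}$ is compact in the $\delta$-topology (modulo translations), every $K_k$ lies in a fixed compact set. So any limit class modulo $G$ has a representative $L\in\overline{\Sigma^*K}$, and by passing to a subsequence we may assume $K_{k_j}\to L$ in $\delta$, up to $G$-equivalence. Here I would use that the quotient map is open. The one thing to check is that convergence modulo $G$ plus compactness upstairs yields an actual subsequential limit $L$ with $[L]_G=\ell$. I expect this to follow from closedness of $G$ and from the fact that $\delta$-limits of bodies $G$-equivalent to elements of a compact set can be realized upstairs.

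The second step shows that a limit body must be invariant; this uses randomness. Fix $m$ and a finite sequence $V_1,\dots,V_m\in\Orthogonal(n)$. Assume the maps $\Sigma_U$ are continuous on $\Knn$, jointly in $U$, which I take from the basic properties of Steiner operations in Section~\ref{section:basic_properties_of_steiner_operations}. Then there is a neighbourhood $N$ of $(V_1,\dots,V_m)$ in $\Orthogonal(n)^m$ such that for all $(W_i)\in N$ and all $M$ close to $L$, $\Sigma_{W_m}\cdots\Sigma_{W_1}M$ is close to $\Sigma_{V_m}\cdots\Sigma_{V_1}L$. The set $N$ has positive Haar measure $p>0$. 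The times $k_j$ can be chosen as stopping times, for instance the successive times at which $K_k$ enters a small neighbourhood of the compact set of representatives of $\ell$, with disjoint blocks. By the strong Markov property and independence of the $U_i$, a Borel--Cantelli argument for independent trials shows the following: almost surely on the convergence event, infinitely often the next $m$ directions lie in $N$. Along those times, $K_{k_j+m}$ is close to $\Sigma_{V_m}\cdots\Sigma_{V_1}L$. Since the whole sequence converges modulo $G$ to $\ell$, we get $\Sigma_{V_m}\cdots\Sigma_{V_1}L\in\overline{[L]_G}$. A countable dense set of tuples, together with continuity, handles all tuples simultaneously almost surely.

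The main obstacle is the final step, passing from $\Sigma_{V}L\in\overline{[L]_G}$ to $\Sigma_VL\in[L]_G$, that is, showing the $G$-orbit of $L$ is closed in $\Knn$. I would first try to exploit the compactness of $\overline{\Sigma^*K}$. All relevant bodies stay within bounded geometric distance of each other, so approximating elements $A_iL$ with $A_i\in G$ must have $A_i$ with bounded condition number after normalizing by scalars. If the paper's notion of $G$-equivalence already absorbs homotheties, or if one normalizes by a positive scalar, then $A_i$ ranges in a compact subset of $G$, and a convergent subsequence $A_i\to A\in G$ (using that $G$ is closed) gives $\Sigma_VL=AL$ up to translation. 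If scalars are not in $G$, I would replace ``invariant'' by the homothety-inclusive version used elsewhere in the paper, or observe that positive homogeneity and translation-class preservation keep the volumes or inradii of the relevant iterates controlled. Once closedness holds, $L$ is $\Sigma$-invariant modulo $G$, and this contradiction proves almost sure divergence.
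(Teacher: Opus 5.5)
Your route differs from the paper's, and the probabilistic step as you describe it does not work. The neighbourhood $N$ of $(V_1,\dots,V_m)$ and the times $k_j$ (``successive times at which $K_k$ enters a small neighbourhood of the representatives of $\ell$'') are defined through the limit class $\ell$. But $\ell$ is a function of the whole future sequence $U_1,U_2,\dots$, so the $k_j$ are not stopping times for $\mathcal F_t=\sigma(U_1,\dots,U_t)$. You therefore cannot use the strong Markov property to say that the block $(U_{k_j+1},\dots,U_{k_j+m})$ is independent of the past and lies in $N$ with probability $\mu^{\otimes m}(N)$. Whether the path converges, and to which class, depends on exactly these directions, so a priori convergence could be correlated with avoiding $N$. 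Passing to a countable dense set of tuples does not help, because the problematic dependence is on $L$, not on $\mathbf V$. To repair the argument you must decouple from the random limit before applying Borel--Cantelli. Run it for every set $B$ in a countable base of the compact set $\overline{\Sigma^*K}$ (entrance times into a fixed $B$, spaced by at least $m$, are stopping times), every tuple from a countable dense set, and every neighbourhood from a countable base. Intersect these countably many almost sure events, and only then fix a path and its subsequential limit $L$.

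The paper never identifies a limit. Since $\mathcal I_G(\Sigma)=\emptyset$, every body is $\Sigma$-errant, and compactness of $\overline{\Sigma^*K}$ gives compact recurrence. In the errant-divergence theorem, compactness of $S$ and openness of the sets $M_k$ give uniform constants $k^*$ and $\eta^*>0$: from every $L\in S$, the process moves a $\delta_G$-distance at least $\eta^*$ within $k^*$ steps with probability at least $\eta^*$. Applied at genuine stopping times, this forces infinitely many such jumps, so the path is not Cauchy.

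Your ``main obstacle'' is not actually an obstacle, and replacing invariance by a homothety-inclusive notion would change the statement. The metric $\delta$ is not scale-invariant (for instance $d(K,2K)=2$). So $\delta(A_iL,M)\to0$ already gives $\sup_i\diam(A_iL)<\infty$ and $\inf_i\vol(A_iL)>0$ by Lemma~\ref{lemma:characterization_of_compactness}. Hence $\norm{A_i}$ and $\norm{A_i^{-1}}$ are bounded, and a subsequence converges in the closed group $G$. This is the argument of Lemma~\ref{lemma:set_of_invariant_bodies_is_closed}, and closedness of $G$-orbits is recorded in Section~\ref{section:basic_properties_of_steiner_operations}; the paper's proof relies on it too.
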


We note that Theorem~\ref{theorem:sufficient_condition_for_convergence_simple_version} and Theorem~\ref{theorem:sufficient_condition_for_divergence_simple_version} cannot resolve Question~\ref{question:precise_question}, because the forward orbit of $K \in \Knn$ under shaking is never precompact for $n \geq 2$ (see Section \ref{section:compact_recurrence_of_planar_lambda_steiner_processes}). The more general criteria from Section~\ref{section:random_convergence_of_generalized_steiner_processes} encompass arbitrary planar $\lambda$-Steiner processes; instead of requiring $\overline{\Sigma^*K}$ to be compact, they are based on the probabilistic assumption of compact recurrence (see Definition \ref{definition:compact_recurrence}). %
In Section~\ref{section:compact_recurrence_of_planar_lambda_steiner_processes}, we verify that planar $\lambda$-Steiner processes are compactly recurrent. In view of the established criteria, the planar version of Question \ref{question:precise_question} is thereby reduced to a  deterministic problem, namely, for which values of $\lambda \in [0,\frac12]$ there exist $S^\lambda$-invariant planar convex bodies. We solve this problem in Section~\ref{section:a_characterization_of_ellipses_and_triangles}, showing that the property of being $S^\lambda$-invariant for some $\lambda\in[0,\frac12]$ characterizes triangles and ellipses. Using this characterization, we are able to fully resolve the planar version of Question \ref{question:precise_question} as follows.
\begin{theorem} \label{theorem:solution_to_planar_question}
    Let $K \in \Ktt$, $\lambda \in [0,\frac12]$ and $G\leq\GL(2,\R)$ be closed. The random $\lambda$-Steiner process starting at $K$ converges modulo $G$ almost surely exactly in the following cases:
    \begin{enumerate}
        \item $\lambda =\frac12$ and $G$ is any closed subgroup of $\GL(2,\R)$, or
        \item $\lambda =0$ and $G$ contains $\SL(2,\R)$.
    \end{enumerate}
    In the first case, the process converges to (the equivalence class modulo $G$ of) the Euclidean ball of volume $\vol K$. In the second case, the process converges to the class of triangles of volume $\vol K$. For all other choices of $\lambda$ and $G$, the process diverges modulo $G$ almost surely.
\end{theorem}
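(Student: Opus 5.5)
The plan is to reduce Theorem~\ref{theorem:solution_to_planar_question} to the general criteria announced in the introduction. These are the tethered versions of Theorem~\ref{theorem:sufficient_condition_for_convergence_simple_version} and Theorem~\ref{theorem:sufficient_condition_for_divergence_simple_version}, namely Theorem~\ref{theorem:characterization_tethered} together with its divergence counterpart. In those versions, precompactness of $\overline{\Sigma^*K}$ is replaced by compact recurrence. First I would invoke the result of Section~\ref{section:compact_recurrence_of_planar_lambda_steiner_processes} that every planar random $\lambda$-Steiner process is compactly recurrent, for every $\lambda\in[0,\frac12]$. This makes the general criteria applicable, and the question becomes deterministic: for which pairs $(\lambda,G)$ does there exist an $S^\lambda$-invariant body modulo $G$, and is it universal?

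Second, I would use the characterization from Section~\ref{section:a_characterization_of_ellipses_and_triangles}: a planar body that is $S^\lambda$-invariant modulo $\GL(2,\R)$ for some $\lambda$ is an ellipse or a triangle. Ellipses are invariant only for $\lambda=\frac12$, and triangles only for $\lambda=0$. Since $\lambda$-Steiner symmetrization preserves area, any invariant class modulo $G$ is also invariant modulo $G\cap\SL^{\pm}(2,\R)$, up to the volume normalization. I would then refine the characterization according to $G$.

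For $\lambda=\frac12$, the Euclidean disk is invariant modulo the trivial group. It is universal by the classical argument of Steiner and Mani-Levitska, so case (1) follows for every closed $G$.

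For $\lambda=0$, a triangle $T$ has shakings that exhaust all triangles of the same area, since any two triangles of equal area are $\SL(2,\R)$-equivalent. It follows that $T$ is invariant modulo $G$ exactly when $G\supseteq\SL(2,\R)$, after checking that random shakings realize every $\SL$-orbit direction. Triangles are universal by Blaschke's result, which gives case (2).

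For every other $(\lambda,G)$, no invariant body exists. When $\lambda\in(0,\frac12)$, this is the content of the characterization. When $\lambda=0$ and $G\not\supseteq\SL(2,\R)$, I would show that the shakings of a triangle produce a family of area-normalized triangles that is not contained in a single $G$-class; ellipses are excluded because they are not shaking-invariant. The divergence criterion then yields almost sure divergence.

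The main obstacle is this last subcase. One must argue that the set $\{B_{U}T\}$ cannot lie in one $G$-orbit unless $G\supseteq\SL(2,\R)$. I would do this by showing that the affine maps relating $T$ to its shakings generate a subgroup whose closure contains $\SL(2,\R)$. Concretely, they include shears in all directions, and shears in two independent directions generate $\SL(2,\R)$. The point to check carefully is that $G$-equivalence of all shakings forces $G$ to contain these shears modulo the finite stabilizer of $T$. A second technical point is matching the limit volume $\vol K$, which follows from area preservation and continuity of volume in $\delta$.
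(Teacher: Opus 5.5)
Your reduction matches the paper's: compact recurrence of planar $\lambda$-Steiner processes, the characterization of $\lambda$-invariant planar bodies, and the tethered/errant criteria settle every case except divergence for $\lambda=0$ and $G\not\supseteq\SL(2,\R)$. That case amounts to showing that if a triangle $T$ is $S^0$-invariant modulo $G$, then $G\supseteq\SL(2,\R)$, and this is where your proposal has a genuine gap.

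Your mechanism is ``the relating maps include shears in all directions, and shears in two directions generate $\SL(2,\R)$.'' It is not justified, and as stated it does not work. Shaking a triangle parallel to one of its sides does give a shear along that side, but with one specific amount. For $T=\conv\{0,e_2,e_1+ae_2\}$, $B_{e_2}T$ is a translate of $\conv\{0,e_1,e_2\}$ for every $a$, and shaking in $\pm e_2$ just toggles between two fixed shapes. Shears with prescribed amounts need not generate a group whose closure contains $\SL(2,\R)$; the two unit shears generate the discrete group $\SL(2,\mathbb{Z})$. What you need is elements of $G$ accumulating at $I_2$ in enough directions, i.e.\ an open family of triangles in the closed forward orbit of $T$. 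Nothing in your plan produces this; Blaschke's theorem only gives approximation by \emph{some} triangle. Moreover, $G$ need not contain a relating map $A$ itself, only some element of the coset $A\cdot\mathrm{Stab}(T)$ (six vertex permutations), and the choice may vary with $A$. So facts about the group generated by the $A$'s say nothing about $G$ until this ambiguity is removed. You flag this as ``the point to check carefully,'' but it is the heart of the matter.

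The missing ingredient is the Campi--Colesanti--Gronchi theorem that \emph{every} non-obtuse triangle is $S^0$-universal. Set $H\coloneqq G\cap\SL^{\pm}(2,\R)$; this is legitimate because shaking preserves area. Lemma~\ref{lemma:sigma_invariance_and_sigma_universality} then puts all non-obtuse triangles of a given area into a single $H$-class. Take a regular triangle $T$ and suppose $M_i\to I_2$ in $\SL^{\pm}(2,\R)$ with $M_i\notin H$. For large $i$, $M_iT$ is non-obtuse, so $M_iP_i\in H$ for some vertex permutation $P_i$. Along a subsequence $P_i=P^*$ is constant, and letting $i\to\infty$ gives $P^*\in H$, hence $M_i\in H$, a contradiction. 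Thus $H$ contains a neighbourhood of $I_2$, so it is open and closed in $\SL^{\pm}(2,\R)$ and contains $\SL(2,\R)$.

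With this input your shear idea can also be completed. Small shears of $T$ are non-obtuse for a continuum of amounts, and a fixed permutation occurs for uncountably many of them. Differences then give arbitrarily small shears in $G$, and closedness yields the full shear subgroups in two directions. Without universality of an open family of triangles, however, this step of your argument is unsupported.
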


Theorem~\ref{theorem:solution_to_planar_question} admits several interpretations. Broadly speaking, we might say that the natural setting of Steiner symmetrization is Euclidean geometry, whereas shaking belongs into the realm of equiaffine geometry. Moreover, Theorem~\ref{theorem:solution_to_planar_question} suggests that the parallels between Steiner symmetrization and shaking in the plane are best understood as a duality between two exceptional geometric operations; they are exceptional in the sense that their shared properties are not inherited by the family of operations that forms their ``convex hull''. Finally, Theorem~\ref{theorem:solution_to_planar_question} highlights the special role of ellipses and triangles among planar convex bodies. As we show in Section~\ref{section:a_characterization_of_ellipses_and_triangles}, there are no intermediate bodies that would play similar roles for $\lambda$-Steiner symmetrization for $\lambda \in (0,\frac12)$; the absence of such ``attractors'' forces the corresponding random $\lambda$-Steiner processes to diverge almost surely.

The article ends with a section with concluding remarks, addressing some of the questions that have been left open in the preceding sections.

\section{Basic properties of Steiner operations} \label{section:basic_properties_of_steiner_operations}

Throughout the section, let $G$ be a closed subgroup of $\GL(n,\R)$.

\begin{definition}
    We define the \emph{$G$-distance} of $K,L \in \Knn$ by $d_G(K,L)\coloneqq d(K,[L]_G)$.
\end{definition}
Equivalently, the $G$-distance is given by %
\begin{equation} \label{equation:def_g_distance}
        d_G(K,L)=\inf \{\lambda\mu\mid \lambda,\mu\geq 1,\;\exists y,z \in \R^n\colon \exists A \in G\colon \lambda (K-y)\supset A(L-z) \supset \mu^{-1} (K-y)\}.
    \end{equation}
In the extremal case $G=\GL(n,\R)$, the $G$-distance $d_G$ is simply the Banach--Mazur distance $d_{\operatorname{BM}}$. As in the special case of $d_{\operatorname{BM}}$, one readily verifies that $\delta_G\coloneqq\log d_G$ is a metric on the quotient $\Knn/\mathord\sim_G$ for arbitrary closed $G\leq\GL(n,\R)$. The notion of convergence that is induced by this metric is precisely convergence modulo $G$. The other extremal case $d=\dId$ has already been introduced in Section~\ref{section:introduction}. %
It is easy to show that $\delta(K_i,L)$ converges to $0$ if and only if there exists a sequence of vectors $(y_i)_{i\in\N}$ such that $K_i-y_i$ converges to $L$ with respect to the Hausdorff metric $d_{\operatorname H}$. Since the topology of a metric space is fully determined by its sequentially closed sets, this shows that the topology induced by $\delta$ is the restriction of the Hausdorff topology modulo translations to $\Knn$. However, in contrast to $(\Knn, d_{\operatorname H})$, the pseudometric space $(\Knn,\log d)$ is complete, and because polytopes with rational vertices form a dense countable subset, separable. Since any closed $G\leq \GL(n,\R)$ acts isometrically on $(\Knn,\delta)$ and its orbits are closed, the same holds for the pseudometric space $(\Knn,\delta_G)$. This fact will be used in Section~\ref{section:random_convergence_of_generalized_steiner_processes}.

We now formally introduce the notion of a Steiner operation, which has already been discussed in Section~\ref{section:introduction}. %

\begin{definition} \label{definition:steiner_operation}
    A map $\Sigma\colon \Orthogonal(n) \times \Knn \rightarrow \Knn$, $(U,K)\mapsto \Sigma_U(K)$ is called a \emph{Steiner operation} if it has the following properties for all $U \in \Orthogonal(n)$:
    \begin{enumerate}[label=(\roman*)]
        \item \label{item:def_steiner_operation:translation_class_preservation} if $K$ is a translate of $L$, then $\Sigma_UK$ is a translate of $\Sigma_UL$ (translation-class preservation),
        \item \label{item:def_steiner_operation:monotonicity} $K \subset L$ implies $\Sigma_U K \subset \Sigma_UL+z$ for some $z \in \R^n$ (monotonicity),
        \item \label{item:def_steiner_operation:homogeneity} $\Sigma_U\lambda K$ is a translate of $\lambda\Sigma_UK$ for all $\lambda > 0$ (positive homogeneity),
        \item \label{item:def_steiner_operation:rotation_equivariance}$\Sigma_{OU}OK$ is a translate of $O\Sigma_UK$ for all $O \in \SO(n)$ (rotation-equivariance).
    \end{enumerate}
\end{definition}

In this section, we study some basic metric and topological properties of Steiner operations, providing the deterministic foundations for our results on convergence and divergence of random $\Sigma$-processes in the next section. 

Before we proceed, let us briefly discuss how the notion of a Steiner operation generalizes $\lambda$-Steiner symmetrization. Let $\lambda \in [0,\frac12]$ and let $\pi_1\colon \Orthogonal(n)\rightarrow \sphere$ be the projection onto the first column. Then the map $\Sigma^\lambda$ given by $\Sigma^\lambda_{U}K\coloneqq S^\lambda_{\pi_1(U)}K$ is a Steiner operation: Properties \ref{item:def_steiner_operation:translation_class_preservation}-\ref{item:def_steiner_operation:homogeneity} are immediate from Definition~\ref{definition:lambda_steiner_symmetrization}, and property~\ref{item:def_steiner_operation:rotation_equivariance} is also easy to see (see Lemma~\ref{lemma:lambda_steiner_affine_transformation} for a proof). Since $\pi_1$ is a quotient map, $u \mapsto \Sulambda K$ is continuous if and only if $U\mapsto \Sigma^\lambda_UK$ is continuous (for a given $K \in \Knn$). It is therefore natural to identify $S^\lambda$ and $\Sigma^\lambda$ and to say that $S^\lambda$ \emph{is} a Steiner operation. Moreover, the pushforward of the Haar probability measure on $\Orthogonal(n)$ is exactly the rotation-invariant probability measure on $\sphere$, so the random $\Sigma^\lambda$ process and the random $\lambda$-Steiner process starting at $K\in \Knn$ have the same distribution. In particular, our criteria for random convergence of generalized Steiner processes are applicable to random $\lambda$-Steiner processes.

\begin{remark} \label{remark:examples_of_steiner_operations}
    Formally, $\lambda$-Steiner symmetrization is a map $\sphere \times \Knn \rightarrow \Knn$, or equivalently, a map $\tilde{\mathcal{G}}(n,1)\times \Knn \rightarrow \Knn$, where $\tilde{\mathcal{G}}(n,1)$ denotes the oriented Grassmannian, consisting of all oriented one-dimensional subspaces of $\R^n$. The projection $\pi_1 \colon \Orthogonal(n)\rightarrow \sphere$ describes how $\tilde{\mathcal{G}}(n,1)$ can be constructed as a quotient of $\Orthogonal(n)$, which allows us to identify $S^\lambda$ with a Steiner operation. In the case $\lambda=\frac12$, we can forget the orientation, and work with the usual Grassmannian $\mathcal G(n,1)$ of one-dimensional subspaces of $\R^n$. But there are also many other well-studied symmetrization operations in convex geometry, which can be described as maps $\mathcal{G}(n,k)\times \Knn \rightarrow \Knn$ for $k \in \{0,\dots,n\}$. In the following, we collect a non-exhaustive list of examples that can be identified with Steiner operations, which entails that our criteria for random convergence are applicable to these operations as well.
    \begin{enumerate}
        \item Given a $k$-dimensional linear subspace $H$ of $\R^n$, the \emph{Schwarz symmetral} $S_HK$ of $K$ with respect to $H$ is obtained by replacing each non-empty slice $K\cap(H^\bot+x)$, $x\in H$, by a Euclidean ball with same $(n-k)$-dimensional volume and center in $H$. If $H=u^\bot$ is a hyperplane, then the Schwarz symmetral $S_HK$ coincides with the Steiner symmetral $S_uK$. Denoting the projection $\Orthogonal(n) \rightarrow \sphere$ onto the $i$-th column by $\pi_i$, the map
        \begin{equation}
            \Pi\colon\Orthogonal(n) \rightarrow \mathcal G(n,k), \quad U \mapsto \linh\{\pi_1(U),\dots,\pi_k(U)\}
        \end{equation}
        describes how $\mathcal G(n,k)$ can be constructed as a quotient of $\Orthogonal(n)$ and it is straightforward to verify that the map $\Sigma$ given by $\Sigma_{U}K\coloneqq S_{\Pi(U)}K$ is a Steiner operation. By a similar identification as above, it makes sense to say that Schwarz symmetrization \emph{is} a Steiner operation.
        \item For a $k$-dimensional linear subspace $H$ of $\R^n$, the (generalized) \emph{Minkowski symmetral} is given by
        \begin{equation}
            M_HK=\frac12 K + \frac12 \rho_H(K),
        \end{equation}
        where $\rho_H$ denotes the reflection across $H$. Again, one readily verifies that this operation can be identified with a Steiner operation.
        \item \emph{The fiber symmetral} $F_HK$ with respect to $H \in \mathcal G(n,k)$ is obtained  by replacing each non-empty slice $K\cap(H^\bot+x)$, $x\in H$, by the fiber-wise Minkowski average
        \begin{equation}
            \frac12 [K\cap(H^\bot+x)] + \frac12 [\rho_H(K)\cap(H^\bot+x)].
        \end{equation}
        This operation is based on McMullen's notion of the \emph{fiber sum} \cite{mcm99}. As for the previous examples, it is straightforward to verify that fiber symmetrization corresponds to a Steiner operation.
    \end{enumerate} 
\end{remark}

It is convenient to interpret iterated applications of $\Sigma$ as an action of the free semigroup $\Orthogonal(n)^+$ on $\Knn$ and to write $\Sigma_{\mathbf U\mathbf O}K\coloneqq\Sigma_{O_\ell}\cdots\Sigma_{O_1} \Sigma_{U_k}\cdots\Sigma_{U_1}K$ if $\mathbf U=(U_1,\dots,U_k)\in\Orthogonal(n)^k$ and $\mathbf O=(O_1,\dots,O_\ell)\in\Orthogonal(n)^\ell$. In other words, words over $\Orthogonal(n)$ act on $\Knn$ and the group structure on $\Orthogonal(n)$ is ignored for the purposes of this action.

The first lemma of this section explains the role of the properties \ref{item:def_steiner_operation:translation_class_preservation}, \ref{item:def_steiner_operation:monotonicity} and \ref{item:def_steiner_operation:homogeneity} in Definition~\ref{definition:steiner_operation}; these properties ensure that Steiner operations are compatible with our notions of distance on $\Knn$.
\begin{lemma}\label{lemma:steiner_operations_are_non-expansive}
    Every Steiner operation $\Sigma$ is non-expansive for the geometric distance $\dId$, i.e., we have
    \begin{equation}
        \dId(\Sigma_UK,\Sigma_UL) \leq \dId(K,L) \quad \text{for all }K,L \in \Knn\text{ and } U \in \Orthogonal(n).
    \end{equation}
    If $L^*$ is $\Sigma$-invariant modulo $G$, then
    \begin{equation}
        d_G(\Sigma_{\mathbf U}K,L^*)\leq d(K,L^*) \quad \text{for all }K \in \Knn \text{ and } \mathbf U \in \Orthogonal(n)^+.
    \end{equation}
\end{lemma}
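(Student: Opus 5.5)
For the first inequality we argue directly from the definition of $d=\dId$. Let $\lambda,\mu\ge 1$ and $y,z\in\R^n$ satisfy
\begin{equation}
\lambda(K-y)\supset L-z\supset \mu^{-1}(K-y).
\end{equation}
By translation-class preservation~\ref{item:def_steiner_operation:translation_class_preservation}, each of $\Sigma_U(\lambda(K-y))$, $\Sigma_U(L-z)$ and $\Sigma_U(\mu^{-1}(K-y))$ is a translate of $\Sigma_U(\lambda K)$, $\Sigma_UL$ and $\Sigma_U(\mu^{-1}K)$, respectively. By positive homogeneity~\ref{item:def_steiner_operation:homogeneity}, these are in turn translates of $\lambda\Sigma_UK$, $\Sigma_UL$ and $\mu^{-1}\Sigma_UK$. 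Monotonicity~\ref{item:def_steiner_operation:monotonicity}, applied to both inclusions, then gives vectors $a,b$ with
\begin{equation}
\lambda\Sigma_UK+a\supset \Sigma_UL\supset \mu^{-1}\Sigma_UK+b.
\end{equation}
The only point needing care is that the two translations come out different, whereas the definition of $d$ asks for a single pair $y',z'$. To fix this, translate everything by $-b$ and set $z'=b$ and $y'=b/\mu$ (up to sign conventions), so that $\Sigma_UL-z'\supset\mu^{-1}(\Sigma_UK-y')$. The left inclusion then reads $\lambda(\Sigma_UK - y'') \supset \Sigma_UL - z'$ with a possibly different centre $y''$. So $y$ in the definition must be allowed to differ between the two sides. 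This is not a problem, because the inner and outer homothets are independent conditions: one can shift $K$'s reference point $y$ and compensate in both inclusions. More simply, I would note that
\begin{equation}
d(K,L)=\inf\{\lambda\mu : \exists\,a,b,\ \lambda K+a\supset L\supset \mu^{-1}K+b\}.
\end{equation}
This holds because a pair $(a,b)$ can always be rewritten in the form $a=-\lambda y+z$, $b=-\mu^{-1}y+z$: for $\lambda\mu\neq1$ one solves the linear system for $y,z$, and the case $\lambda\mu=1$ follows by a limit or a direct argument (then $L$ is a translate of $\lambda K$). With this reformulation, the displayed inclusions give $d(\Sigma_UK,\Sigma_UL)\le\lambda\mu$, and taking the infimum proves the claim. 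The main (mild) obstacle is exactly this bookkeeping of translations.

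For the second inequality, note first that $d$ and hence $d_G$ are invariant under translations. By the first part applied iteratively along $\mathbf U=(U_1,\dots,U_k)$, we get $d(\Sigma_{\mathbf U}K,\Sigma_{\mathbf U}L^*)\le d(K,L^*)$. By invariance, $\Sigma_{\mathbf U}L^*\in[L^*]_G$, so
\begin{equation}
d_G(\Sigma_{\mathbf U}K,L^*)=d(\Sigma_{\mathbf U}K,[L^*]_G)\le d(\Sigma_{\mathbf U}K,\Sigma_{\mathbf U}L^*)\le d(K,L^*).
\end{equation}
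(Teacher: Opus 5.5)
Your proposal is correct and matches the paper's proof. You obtain the two inclusions with independent translations from properties (i)--(iii), and you reconcile them by solving the linear system for a common centre when $\lambda\mu\neq 1$; the paper simply writes down the resulting translation vector explicitly. When $\lambda\mu=1$, this forces $\lambda=\mu=1$, i.e.\ $L$ is a translate of $K$. The second claim is argued exactly as in the paper.

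The tentative paragraph before your reformulation of $d$, which suggests the centre $y$ must be allowed to differ between the two inclusions, is confused and should be deleted. The reformulation with decoupled translations $a,b$ already settles the bookkeeping.
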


\begin{proof}
    Let $U \in \Orthogonal(n)$ and let $\lambda,\mu \geq 1$ and $y_1,y_2 \in \R^n$ such that
    \begin{equation}
        \lambda (K-y_1)\supset L-y_2 \supset \mu^{-1} (K-y_1).
    \end{equation}
    Now monotonicity, homogeneity and the translation-class preservation property of $\Sigma$ imply the existence of $z_1,z_2,z_3 \in \R^n$ such that
    \begin{equation}
        \lambda \Sigma_UK-z_1\supset \Sigma_UL -z_2 \supset \mu^{-1} \Sigma_UK-z_3.
    \end{equation}
    If $\lambda\mu>1$, then translation by $(1-\frac1{1-(\lambda\mu)^{-1}})z_1+\frac1{1-(\lambda\mu)^{-1}}z_3$ yields an inclusion as in \eqref{equation:def_g_distance}, and for $\lambda\mu=1$ there is nothing to show. It follows that $\Sigma_U$ is non-expansive for $\dId$.

    For the second claim, let $L^*$ be $\Sigma$-invariant modulo $G$ and let $\mathbf U \in \Orthogonal(n)^+$. Then there exist $A \in G$ and $z \in \R^n$ with $\Sigma_{\mathbf U}L^*=AL^*+z$ and we get
    \begin{equation}
        d_G(\Sigma_{\mathbf U}K,L^*)\leq \dId(\Sigma_{\mathbf U}K,AL^*+z)=\dId(\Sigma_{\mathbf U}K,\Sigma_{\mathbf U}L^*)\leq \dId(K,L^*). \qedhere
    \end{equation}
\end{proof}

Our arguments in Section~\ref{section:random_convergence_of_generalized_steiner_processes} will be mostly topological. The following lemma provides the first useful topological fact about Steiner operations.

\begin{lemma} \label{lemma:steiner_operations_continuity}
    Let $\Sigma$ be a Steiner operation. Then $L \mapsto \Sigma_U L$ and $O \mapsto \Sigma_O K$ are both continuous for all $K \in \Knn$ and $U \in \Orthogonal(n)$.
\end{lemma}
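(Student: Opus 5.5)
The first claim, continuity of $L \mapsto \Sigma_U L$, follows directly from Lemma~\ref{lemma:steiner_operations_are_non-expansive}. Since $\Sigma_U$ is non-expansive for $\dId$, it is $1$-Lipschitz for the metric $\delta = \log \dId$. The topology on $\Knn$ is generated by $\delta$, so $\Sigma_U$ is continuous; indeed it is even uniformly continuous.

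For the second claim we fix $K$ and reduce to the first claim by rotation-equivariance. Let $O_i \to O$ in $\Orthogonal(n)$. Put $R_i \coloneqq O_i O^{-1}$, so that $R_i \to \Id$ and $O_i = R_i O$. For $i$ large enough, $R_i$ lies in the identity component, so $R_i \in \SO(n)$. Applying property~\ref{item:def_steiner_operation:rotation_equivariance} with the rotation $R_i$, the matrix $O$ and the body $R_i^{-1}K$ shows that $\Sigma_{O_i}K = \Sigma_{R_iO} R_i(R_i^{-1}K)$ is a translate of $R_i\Sigma_O(R_i^{-1}K)$. Since $\delta$ ignores translations, the triangle inequality gives
\begin{equation}
\delta(\Sigma_{O_i}K,\Sigma_OK)\leq \delta\bigl(R_i\Sigma_O(R_i^{-1}K),\Sigma_O(R_i^{-1}K)\bigr)+\delta\bigl(\Sigma_O(R_i^{-1}K),\Sigma_OK\bigr).
\end{equation}
The second term is at most $\delta(R_i^{-1}K,K)$ by non-expansiveness of $\Sigma_O$.

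It remains to show that $\delta(RL,L)\to 0$ as $R\to\Id$, uniformly for $L$ in a suitable set. For the second term we apply this with $L=K$ fixed. For this we use that $RK \to K$ in the Hausdorff metric. Translate $K$ so that it contains a ball $rB$ around the origin and is contained in $\rho B$. Then $RK \subset K + \varepsilon B \subset (1+\varepsilon/r)K$, and symmetrically $K \subset (1+\varepsilon/r)RK$, where $\varepsilon = \|R-\Id\|\rho$. Hence $\delta(RK,K) \leq 2\log(1+\|R-\Id\|\rho/r)$.

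The main obstacle is the first term. There the body $L_i = \Sigma_O(R_i^{-1}K)$ varies with $i$, so the estimate above needs a uniform bound on the ratio $\rho/r$ over the $L_i$ (after translation). I handle this by comparison with the fixed body $\Sigma_OK$. Since $\delta(L_i,\Sigma_OK) \le \delta(R_i^{-1}K,K) \to 0$, we have, after translation, $\mu_i^{-1}\Sigma_OK \subset L_i \subset \lambda_i\Sigma_OK$ with $\lambda_i,\mu_i \to 1$. Therefore the inradius and circumradius of the $L_i$ are bounded uniformly, by those of $\Sigma_OK$ up to factors close to $1$. The explicit estimate then yields $\delta(R_iL_i,L_i)\to 0$. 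Alternatively, the triangle inequality gives
\begin{equation}
\delta(R_iL_i,L_i)\le \delta(R_iL_i,R_i\Sigma_OK)+\delta(R_i\Sigma_OK,\Sigma_OK)+\delta(\Sigma_OK,L_i),
\end{equation}
and the first and last terms are equal because $R_i$ is an isometry for $\delta$. Either way both terms tend to zero, which proves continuity of $O\mapsto\Sigma_OK$.
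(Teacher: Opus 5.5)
Your proof is correct and is essentially the paper's argument. You use rotation-equivariance to write $\Sigma_{O_i}K$ as a translate of $R_i\Sigma_O(R_i^{-1}K)$, then the triangle inequality, non-expansiveness of $\Sigma_O$, the fact that rotations are $\delta$-isometries, and $\delta(RL,L)\to 0$ for a fixed body $L$, which you helpfully make quantitative. The one difference is that the paper uses $R_i\Sigma_OK$ as the intermediate body instead of $\Sigma_O(R_i^{-1}K)$, so the term you flag as the main obstacle never arises; your ``alternative'' three-term estimate is exactly that shortcut.
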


\begin{proof}
    By Lemma~\ref{lemma:steiner_operations_are_non-expansive} $L \mapsto \Sigma_U L$ is non-expansive and hence continuous. For the second claim, let $(U_i)_{i \in \N}$ be a sequence in $\Orthogonal(n)$ that converges to $U$. Without loss of generality, we assume that $\det U_i=\det U$ for all $i \in \N$. Let $O_i\coloneqq U_iU^{-1}\in\SO(n,\R)$.
    Then  the rotation-equivariance of $\Sigma$ yields
    \begin{align}
        \deltaId(\Sigma_UK,\Sigma_{U_i}K) =\deltaId(\Sigma_UK,\Sigma_{O_iU}O_i O_i^{-1}K)=\deltaId(\Sigma_UK,O_i\Sigma_UO_i^{-1}K)
    \end{align}
    and by the continuity of $L \mapsto \Sigma_U L$ and the fact that $O_i \goestoinfty{i} I_n$, we get
    \begin{align}
        \deltaId(\Sigma_UK,O_i\Sigma_UO_i^{-1}K)&\leq \deltaId(\Sigma_UK,O_i\Sigma_UK)+\deltaId(O_i\Sigma_UK,O_i\Sigma_UO_i^{-1}K)\\
        &=\deltaId(\Sigma_UK,O_i\Sigma_UK)+\deltaId(\Sigma_UK,\Sigma_UO_i^{-1}K)\goestoinfty{i}0. \qedhere
    \end{align}
\end{proof}

The notion of a $\Sigma$-invariant convex body was introduced in Definition \ref{definition:sigma_invariant}. Given a Steiner operation $\Sigma$ on $\Knn$, we write $\mathcal I_G(\Sigma)$ for the set of convex bodies in $\R^n$ that are $\Sigma$-invariant modulo $G$.

\begin{lemma} \label{lemma:set_of_invariant_bodies_is_closed}
    For any Steiner operation $\Sigma$, the set $\mathcal I_G(\Sigma)$ is closed.
\end{lemma}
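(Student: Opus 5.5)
We show that the complement of $\mathcal I_G(\Sigma)$ is open, or equivalently that $\mathcal I_G(\Sigma)$ is sequentially closed. Since the topology is induced by the pseudometric $\delta$, sequential closedness suffices. So let $L_j\in\mathcal I_G(\Sigma)$ with $\delta(L_j,L)\to 0$, and fix a finite word $\mathbf U=(U_1,\dots,U_k)\in\Orthogonal(n)^k$. The goal is $\Sigma_{\mathbf U}L\in[L]_G$, i.e.\ $\delta_G(\Sigma_{\mathbf U}L,L)=0$. This reduces the claim to the statement that $[L]_G$ is closed, which the paper records: $G$ is closed and acts isometrically with closed orbits.

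The key estimate comes from the triangle inequality for $\delta_G$, which is a pseudometric on $\Knn$ that vanishes exactly on $G$-equivalent pairs:
\begin{equation}
    \delta_G(\Sigma_{\mathbf U}L,L)\leq \delta_G(\Sigma_{\mathbf U}L,\Sigma_{\mathbf U}L_j)+\delta_G(\Sigma_{\mathbf U}L_j,L_j)+\delta_G(L_j,L).
\end{equation}
We bound the three terms in turn.

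\textbf{Middle term.} It vanishes because $L_j$ is invariant, so $\Sigma_{\mathbf U}L_j\in[L_j]_G$.

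\textbf{Last term.} It is at most $\deltaId(L_j,L)\to0$, since $\delta_G\le\deltaId$.

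\textbf{First term.} It is at most $\deltaId(\Sigma_{\mathbf U}L,\Sigma_{\mathbf U}L_j)$. Iterating the non-expansiveness from Lemma~\ref{lemma:steiner_operations_are_non-expansive} over the $k$ letters of $\mathbf U$ bounds this by $\deltaId(L,L_j)\to0$.

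Letting $j\to\infty$ gives $\delta_G(\Sigma_{\mathbf U}L,L)=0$.

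The only point needing care is that $\delta_G(K,L)=0$ implies $K\sim_G L$. This is where closedness of $G$ enters. I would take it from the paper's remark that $\delta_G$ is a metric on $\Knn/\mathord\sim_G$, i.e.\ that orbits are closed. If one wants it directly: choose $A_i\in G$ and translates with $d(K,A_iL)\to1$. Volumes and inradius/circumradius bounds then confine $A_i$, after translation, to a bounded set of matrices with determinant bounded away from zero. A subsequence converges to some $A\in G$ with $AL$ a translate of $K$.
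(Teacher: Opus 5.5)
Your proof is correct and uses the same two ingredients as the paper's: non-expansiveness (hence continuity) of $\Sigma_{\mathbf U}$, and a compactness argument for matrices in the closed group $G$. The only difference is packaging: the paper extracts a convergent subsequence directly from the matrices $A_i\in G$ witnessing $\Sigma_{\mathbf U}L_i^*=A_iL_i^*+z_i$, whereas you first use the triangle inequality for $\delta_G$ to get $\delta_G(\Sigma_{\mathbf U}L,L)=0$ and then delegate the same compactness step to the closedness of $G$-orbits.
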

\begin{proof}
    For $i \in \N,$ let $L_i^*\in \Knn$ be $\Sigma$-invariant modulo $G$ and let $\deltaId(K,L_i^*) \goestoinfty{i}0$. Let $\mathbf U \in \Orthogonal(n)^+$. Then there exists a sequence of matrices $A_i \in G$ and vectors $z_i \in\R^n$ with $\Sigma_{\mathbf U}L_i^*=A_iL_i^*+z_i$ for $i \in \N$. Since $L_i^* \goestoinfty{i} K$ and $A_iL_i^*\goestoinfty{i} \Sigma_{\mathbf U} K$, the operator norms $\norm{A_i}$ and $\norm{A_i^{-1}}$ are bounded and there exists a convergent subsequence $(A_{k(i)})_{i \in \N}$. Since $G$ is closed, $A^* \coloneqq \lim_{i \rightarrow \infty} A_{k(i)}$ is an element of $G$ and we have
    \begin{equation}
        A^*K \underset{i \rightarrow \infty}{\longleftarrow}A_{k(i)}L_{k(i)}^*\goestoinfty{i}\Sigma_{\mathbf U}K,
    \end{equation}
    which shows that $K$ is $\Sigma$-invariant as well.
\end{proof}

The following result clarifies the relation between $\Sigma$-universality and $\Sigma$-invariance.

\begin{lemma} \label{lemma:sigma_invariance_and_sigma_universality}
    Let $\Sigma$ be a Steiner operation and let $K\in\Knn$ be $\Sigma$-universal. If there exists a convex body $L^*\in \Knn$ that is $\Sigma$-invariant modulo $G$, then we have $rK \in [L^*]_G$ for some $r>0$ and $K$ is also $\Sigma$-invariant modulo $G$.
\end{lemma}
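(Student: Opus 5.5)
The plan is to run a $\Sigma$-process that starts at the invariant body and converges to the universal body, and to use invariance to keep every term in a single $G$-class. Since $K$ is $\Sigma$-universal, applying the definition with starting body $L^*$ gives a sequence $U_1,U_2,\dots\in\Orthogonal(n)$ and a number $r'>0$ with
\begin{equation}
    \deltaId(\Sigma_{U_i}\cdots\Sigma_{U_1}L^*,r'K)\goestoinfty{i}0.
\end{equation}
Because $L^*$ is $\Sigma$-invariant modulo $G$, every body $\Sigma_{U_i}\cdots\Sigma_{U_1}L^*$ lies in $[L^*]_G$. So we can write $\Sigma_{U_i}\cdots\Sigma_{U_1}L^*=A_iL^*+z_i$ with $A_i\in G$ and $z_i\in\R^n$.

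Next, exactly as in the proof of Lemma~\ref{lemma:set_of_invariant_bodies_is_closed}, we show that the class $[L^*]_G$ is closed; this is the step that needs care. The bodies $A_iL^*$ converge modulo translations to $r'K$, which has non-empty interior. Hence their inradii and circumradii are bounded above and below, and since $L^*$ is a fixed body with non-empty interior, $\norm{A_i}$ and $\norm{A_i^{-1}}$ are bounded. Passing to a subsequence, $A_i\to A^*$, and $A^*\in G$ because $G$ is closed. By continuity, $A^*L^*$ is then a translate of $r'K$. Setting $r\coloneqq 1/r'$ and using that scalar multiples commute with $A^*$, we get $rA^*L^*=K$ up to translation. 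Hence $K\sim_G rL^*$, or equivalently $rK\in[L^*]_G$ after rescaling; the two formulations are interchangeable because homotheties commute with $G$ as a set of linear maps.

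For the second claim, let $K=A(sL^*)+z$ with $A\in G$, $s>0$. For any word $\mathbf U$ we would like to transport invariance from $L^*$ to $K$. Homogeneity \ref{item:def_steiner_operation:homogeneity} and translation-class preservation give $\Sigma_{\mathbf U}(sL^*+z)\sim sA'L^*$ for some $A'\in G$. However, $\Sigma$ does not commute with $A$ in general, so we cannot simply conjugate.

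The cleaner route is to use Lemma~\ref{lemma:set_of_invariant_bodies_is_closed} together with the convergence above. We first note that $\mathcal I_G(\Sigma)$ is invariant under the action of $G$ and under scaling, since it is defined by $G$-classes. Scaling is fine by homogeneity. The $G$-action is the subtle point, so we avoid it. We take $L_i\coloneqq\Sigma_{U_i}\cdots\Sigma_{U_1}L^*$. Each $L_i$ is itself $\Sigma$-invariant modulo $G$: for any word $\mathbf U$, $\Sigma_{\mathbf U}L_i=\Sigma_{\mathbf U'}L^*\in[L^*]_G=[L_i]_G$, where $\mathbf U'$ is the concatenated word. Since $L_i\to r'K$ modulo translations, Lemma~\ref{lemma:set_of_invariant_bodies_is_closed} shows that $r'K$ is $\Sigma$-invariant modulo $G$. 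By homogeneity, so is $K$.

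The main obstacle is the boundedness and compactness argument for $A_i$. It is routine and mirrors the argument in the proof of Lemma~\ref{lemma:set_of_invariant_bodies_is_closed}.
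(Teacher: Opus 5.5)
Your proposal is correct. For the first claim you follow the paper exactly: apply universality with starting body $L^*$, write the iterates as $A_iL^*+z_i$, bound $\norm{A_i}$ and $\norm{A_i^{-1}}$, and pass to a limit $A^*\in G$.

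Two small slips there. First, the detour through $r\coloneqq 1/r'$ muddles the constant. With that $r$ you get $K\sim_G rL^*$, i.e.\ $r'K\in[L^*]_G$, not $rK\in[L^*]_G$. Since $A^*L^*$ is a translate of $r'K$, the claim already holds with $r=r'$, so just stop there. Second, your aside that $\mathcal I_G(\Sigma)$ is invariant under the action of $G$ does not follow from the definition: invariance of $L$ says nothing about $\Sigma_{\mathbf U}(AL)$. You never use it, so delete it rather than ``avoid'' it.

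For the second claim your route differs from the paper's. The paper fixes a word $\mathbf V$ and writes $\Sigma_{\mathbf V}\Sigma_{U_i}\cdots\Sigma_{U_1}L^*=B_iL^*+y_i$. It then uses non-expansiveness (Lemma~\ref{lemma:steiner_operations_are_non-expansive}) to get $\deltaId(B_iL^*,\Sigma_{\mathbf V}rK)\to 0$ and reruns the compactness argument for the $B_i$. Finally it needs the first claim, $[L^*]_G=[rK]_G$, to conclude $\Sigma_{\mathbf V}rK\in[rK]_G$.

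You instead observe that every iterate $L_i$ of an invariant body is itself invariant, since its whole forward orbit lies in $[L^*]_G=[L_i]_G$. You then invoke the closedness of $\mathcal I_G(\Sigma)$ from Lemma~\ref{lemma:set_of_invariant_bodies_is_closed}. The ingredients are the same: continuity of $\Sigma_{\mathbf V}$, bounded matrices, closedness of $G$. Your packaging has some advantages:
\begin{itemize}
\item it reuses an already-proved lemma instead of repeating its argument;
\item it makes the invariance claim independent of the first claim;
\item it yields the slightly more general fact $\overline{\Sigma^*L^*}\subset\mathcal I_G(\Sigma)$ for every invariant $L^*$.
\end{itemize}
The paper's version is self-contained within the lemma, but it essentially re-derives a special case of the closedness lemma.
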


\begin{proof}
    Because $K$ is $\Sigma$-universal, there exist a sequence of matrices $U_1,U_2,\dots\in\Orthogonal(n)$ and a number $r>0$ with $\deltaId(\Sigma_{U_i}\cdots\Sigma_{U_1}L^*,rK)\goestoinfty{i}0$. Since $L^*$ is $\Sigma$-invariant modulo $G$, there exist two sequences $A_1,A_2,\dots\in G$ and $z_1,z_2,\dots \in \R^n$ with $A_iL^*+z_i=\Sigma_{U_i}\cdots\Sigma_{U_1}L^*$ for $i\in\N$. Because $\deltaId(A_iL^*,rK)=\deltaId(\Sigma_{U_i}\cdots\Sigma_{U_1}L^*,rK)\goestoinfty{i}0$, the operator norms $\norm{A_i}$ and $\norm{A_i^{-1}}$ are bounded and there exists a convergent subsequence $(A_{k(i)})_{i \in \N}$ with limit $A^* \in G$. Since $\deltaId(A^*L^*,rK)=0$, we have $rK \in [L^*]_G$. To show the second claim, let $\mathbf V \in \Orthogonal(n)^+$. Again, because $L^*$ is $\Sigma$-invariant modulo $G$, there exist two sequences $B_1,B_2,\dots\in G$ and $y_1,y_2,\dots\in\R^n$ such that $B_iL^*+y_i=\Sigma_{\mathbf V}\Sigma_{U_i}\cdots\Sigma_{U_1}L^*$. By Lemma~\ref{lemma:steiner_operations_are_non-expansive}, we have
    \begin{equation}
        \deltaId(B_iL^*,\Sigma_{\mathbf V}rK)=\deltaId(\Sigma_{\mathbf V}\Sigma_{U_i}\cdots\Sigma_{U_1}L^*,\Sigma_{\mathbf V}rK)\leq \deltaId(\Sigma_{U_i}\cdots\Sigma_{U_1}L^*,rK)\goestoinfty{i}0.
    \end{equation}
    By the same argument as above, there exists a convergent subsequence $(B_{k(i)})_{i \in \N}$ with limit $B^* \in G$. Now $\deltaId(B^*L^*,\Sigma_{\mathbf V}rK)=0$ implies $\Sigma_{\mathbf V}rK \in [L^*]_G=[rK]_G$, which shows that $rK$ is $\Sigma$-invariant modulo $G$. By the homogeneity of $\Sigma$, $K$ is also $\Sigma$-invariant modulo $G$.
\end{proof}

The following two notions will be used to formulate our criteria for almost sure convergence and divergence of random $\Sigma$-processes.

\begin{definition}
    Let $\Sigma$ be a Steiner operation. A convex body $K \in \Knn$ is called \emph{$\Sigma$-errant} modulo $G$ if $\deltaId(\Sigma^*K,\mathcal I_G(\Sigma))>0$. The body $K$ is called $\Sigma$-\emph{tethered} modulo $G$ if $\Sigma^*K$ does not contain a $\Sigma$-errant body, i.e., if
    \begin{equation}
        \deltaId(\Sigma^*L,\mathcal I_G(\Sigma))=0 \quad \text{for all } L\in\Sigma^*K.
    \end{equation}
    We write $\mathcal E_G(\Sigma)$ and $\mathcal T_G(\Sigma)$ for the sets of convex bodies in $\R^n$ that are $\Sigma$-errant and $\Sigma$-tethered modulo $G$, respectively.
\end{definition}

We end the section with two further useful topological facts.

\begin{lemma} \label{lemma:sets_of_errant_and_tethered_bodies}
    For any Steiner operation $\Sigma$, the set $\mathcal E_G(\Sigma)$ is open and the set $\mathcal T_G(\Sigma)$ is closed.
\end{lemma}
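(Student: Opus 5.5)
We show that $\mathcal E_G(\Sigma)$ is open and then read off closedness of $\mathcal T_G(\Sigma)$ from it.

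\emph{Openness of $\mathcal E_G(\Sigma)$.} Let $K\in\mathcal E_G(\Sigma)$ and set $\varepsilon\coloneqq\deltaId(\Sigma^*K,\mathcal I_G(\Sigma))>0$. Take $L\in\Knn$ with $\deltaId(K,L)<\varepsilon/2$ and fix $\mathbf U\in\Orthogonal(n)^+$. By Lemma~\ref{lemma:steiner_operations_are_non-expansive}, applied letter by letter,
\begin{equation}
  \deltaId(\Sigma_{\mathbf U}L,\Sigma_{\mathbf U}K)\leq\deltaId(L,K)<\varepsilon/2.
\end{equation}
Now let $M\in\mathcal I_G(\Sigma)$. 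Since $\deltaId(\Sigma_{\mathbf U}K,M)\geq\varepsilon$, the triangle inequality for $\deltaId$ gives
\begin{equation}
  \deltaId(\Sigma_{\mathbf U}L,M)\geq\deltaId(\Sigma_{\mathbf U}K,M)-\deltaId(\Sigma_{\mathbf U}K,\Sigma_{\mathbf U}L)>\varepsilon/2 .
\end{equation}
Taking the infimum over $\mathbf U$ and $M$ yields $\deltaId(\Sigma^*L,\mathcal I_G(\Sigma))\geq\varepsilon/2>0$. Hence every $L$ in the open $\deltaId$-ball of radius $\varepsilon/2$ around $K$ is $\Sigma$-errant, so $\mathcal E_G(\Sigma)$ is open.

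\emph{Closedness of $\mathcal T_G(\Sigma)$.} We show the complement is open. Let $K\notin\mathcal T_G(\Sigma)$. Then there is a word $\mathbf U\in\Orthogonal(n)^+$ with $\Sigma_{\mathbf U}K\in\mathcal E_G(\Sigma)$.
\begin{itemize}
  \item The map $L\mapsto\Sigma_{\mathbf U}L$ is continuous, being a composition of the continuous maps from Lemma~\ref{lemma:steiner_operations_continuity}; indeed it is non-expansive.
  \item Therefore $\{L\in\Knn\mid\Sigma_{\mathbf U}L\in\mathcal E_G(\Sigma)\}$ is the preimage of an open set under a continuous map, hence an open neighbourhood of $K$.
  \item Every $L$ in this neighbourhood satisfies $\Sigma_{\mathbf U}L\in\Sigma^*L\cap\mathcal E_G(\Sigma)$, so $L$ is not tethered.
\end{itemize}
Thus the complement of $\mathcal T_G(\Sigma)$ is open, and $\mathcal T_G(\Sigma)$ is closed.

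One technical point is the convention that the orbit $\Sigma^*K$ contains only words of length $k\geq1$. This does not matter for the argument. The estimate above is uniform over all words $\mathbf U$, and membership in the complement of $\mathcal T_G(\Sigma)$ is witnessed by a single fixed word.

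Neither step is a real obstacle. The only care needed is that $\deltaId$ is just a pseudometric on $\Knn$; this is harmless, since it still satisfies the triangle inequality.
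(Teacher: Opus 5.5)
Your proof is correct and follows the paper's route. The openness of $\mathcal E_G(\Sigma)$ is exactly the non-expansiveness estimate behind the paper's one-line remark that $K\mapsto\deltaId(\Sigma^*K,\mathcal I_G(\Sigma))$ is continuous. For closedness, the paper argues sequentially: it approximates $K$ by tethered bodies $K_i$ and uses the concatenated word $\mathbf U\mathbf V$ with an $\varepsilon/2$ estimate. You instead pull back the open set $\mathcal E_G(\Sigma)$ along the continuous map $L\mapsto\Sigma_{\mathbf U}L$, which is the contrapositive of the same argument and slightly more economical because it reuses the first part.
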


\begin{proof}
    The first claim follows from the fact that $K \mapsto \deltaId(\Sigma^*K,\mathcal I_G(\Sigma))$ is continuous.

    For the second claim, let $(K_i)_{i \in \N}$ be a sequence of $\Sigma$-tethered convex bodies that converges to $K$ and let $L\in\Sigma^*K$. We have to show that for every $\varepsilon>0$ there exists $\tilde L \in \Sigma^*L$ with $\deltaId(\tilde L, \mathcal I_G(\Sigma))<\varepsilon$. By definition, there exists a finite sequence $\mathbf U \in \Orthogonal(n)^+$ with $L=\Sigma_\mathbf U K$. Let $i \in \N$ with $\deltaId(K,K_i)<\frac\varepsilon2$. Because $K_i$ is $\Sigma$-tethered, there exists a finite sequence $\mathbf V\in \Orthogonal(n)^+$ with $\deltaId(\Sigma_{\mathbf U \mathbf V} K_i, \mathcal I_G(\Sigma))<\frac\varepsilon2$. By Lemma~\ref{lemma:steiner_operations_are_non-expansive}, we have
    \begin{equation}
        \deltaId(\Sigma_{\mathbf U \mathbf V}K,\mathcal I_G(\Sigma)) \leq \deltaId(\Sigma_{\mathbf U \mathbf V} K_i, \mathcal I_G(\Sigma))+\deltaId(\Sigma_{\mathbf U \mathbf V} K_i, \Sigma_{\mathbf U \mathbf V} K) < \frac\varepsilon2 +\deltaId(K_i,K)<\varepsilon. \qedhere
    \end{equation}
\end{proof}

\section{Convergence criteria for random generalized Steiner processes}
\label{section:random_convergence_of_generalized_steiner_processes}

In this section, we state and prove the criteria that generalize Theorem~\ref{theorem:sufficient_condition_for_convergence_simple_version} and Theorem~\ref{theorem:sufficient_condition_for_divergence_simple_version}. Again, let $G$ be a closed subgroup of $\GL(n,\R)$ throughout the section.

\begin{definition} \label{definition:random_sigma_process_starting_at_K}
    Let $\Sigma$ be a Steiner operation and let $K\in \Knn$. The \emph{random $\Sigma$-process starting at $K$} is the Markov process in discrete time given by%
    \begin{equation}
        K_0=K \quad \text{and} \quad K_i = \Sigma_{U_i}K_{i-1} \quad \text{for }i\in\N,
    \end{equation}
    where the matrices $U_i$ are independently sampled from $\mu$, the Haar probability measure on $\Orthogonal(n)$.
\end{definition}

Since $(\Knn,\log d_G)$ is complete and separable, convergence of a sequence is equivalent to being a Cauchy sequence, which implies that the event ``$(K_i)_{i\in\N}$ converges'' is measurable. Most statements about probabilities in this section assert that the respective probability is either 1 or 0. Therefore, they also hold for more general random $\Sigma$-processes, provided that the underlying probability measure on $\Orthogonal(n)^{\N}$ is absolutely continuous with respect to $\mu^{\otimes \N}$; one way to verify the latter for product measures is to employ Kakutani's theorem.

\begin{definition} \label{definition:compact_recurrence}
    We say that a Steiner operation $\Sigma$ is \emph{compactly recurrent for $K \in \Knn$} if there exists a compact subset $C(\Sigma,K)$ of $(\Knn,\deltaId)$ such that the random $\Sigma$-process $(K_t)_{t \in \N_0}$ satisfies
    \begin{equation}
        \Prob [K_t \in C(\Sigma,K) \text{ for infinitely many } t\in \N_0] =1.
    \end{equation}
    We say that $\Sigma$ is \emph{compactly recurrent} if it is compactly recurrent for all $K\in \Knn$.
\end{definition}

In order to make sense of Definition~\ref{definition:compact_recurrence}, we clarify the meaning of compactness in our setting.

\begin{lemma} \label{lemma:characterization_of_compactness}
    A subset $C$ of $(\Knn,\deltaId)$ is compact if and only if $C$ is closed, $\sup_{K \in C}\diam K<\infty$ and $\inf_{K \in C}\vol K>0$.
\end{lemma}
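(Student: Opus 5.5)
The plan is to reduce the statement to the Blaschke selection theorem in the Hausdorff metric. Before doing so, I translate between $\deltaId$-convergence and Hausdorff convergence of suitably chosen translates. As noted in Section~\ref{section:basic_properties_of_steiner_operations}, $\deltaId(K_i,L)\to0$ if and only if there are vectors $y_i$ with $d_{\operatorname H}(K_i-y_i,L)\to 0$. Since $(\Knn,\deltaId)$ is a pseudometric space (a metric space modulo translations), compactness is equivalent to sequential compactness, so it suffices to work with sequences throughout.

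\emph{Necessity.} Suppose $C$ is compact. Then $C$ is closed, since compact subsets of (pseudo)metric spaces are closed up to the identification of translates, and $C$ is a union of translation classes under our convention. The functionals $\diam$ and $\vol$ are translation-invariant. They are also continuous with respect to $\deltaId$: if $\lambda(K-y)\supset L-z\supset\mu^{-1}(K-y)$, then $\diam L\le\lambda\diam K$ and $\vol L\ge\mu^{-n}\vol K$, together with the symmetric bounds. A continuous real function on a compact set attains its maximum and minimum. Hence $\sup_C\diam<\infty$, and $\inf_C\vol$ is attained at some convex body, so it is positive because convex bodies have nonempty interior.

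\emph{Sufficiency.} Let $(K_i)$ be a sequence in $C$. Translate each $K_i$ so that it contains the origin; by the diameter bound $D$, all translates then lie in the ball $DB^n$. The Blaschke selection theorem yields a subsequence converging in $d_{\operatorname H}$ to a compact convex set $L$. The main point, which I expect to be the only real obstacle, is to show that $L$ has nonempty interior. I would argue that volume is continuous with respect to the Hausdorff metric on compact convex sets, so $\vol L=\lim\vol K_{k(i)}\ge\inf_C\vol>0$, and a convex set of positive volume has interior points. Then $L\in\Knn$ and $\deltaId(K_{k(i)},L)\to0$ by the equivalence recalled above. Since $C$ is closed, $L\in C$, so $C$ is sequentially compact and hence compact.

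One small point to check is the claim that Hausdorff convergence to a body with nonempty interior implies $\deltaId$-convergence. Assume $L\supset x+\rho B^n$ and $d_{\operatorname H}(K_i,L)<\varepsilon$. Then $K_i\subset L+\varepsilon B^n\subset (1+\varepsilon/\rho)(L-x)+x$. Conversely, convexity gives $K_i\supset x+(1-\varepsilon/\rho)(L-x)$, by the standard argument for shrinking a body inside its Hausdorff neighbourhood. Both inclusion ratios tend to $1$, so the geometric distance tends to $1$. This is exactly the equivalence already stated in the paper, so I would simply cite it.
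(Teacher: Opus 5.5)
Your proof is correct and follows essentially the same route as the paper. For sufficiency, you apply the Blaschke selection theorem to normalized translates (you pin a point at the origin where the paper centers at centroids), use continuity of volume under $d_{\operatorname H}$ to get the limit in $\Knn$, and use closedness to get it in $C$; for necessity, your log-Lipschitz continuity of $\diam$ and $\vol$ with respect to $\deltaId$ plus the extreme value theorem is just a more explicit phrasing of the paper's remark that a sequence with diverging diameter or vanishing volume has no convergent subsequence.
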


\begin{proof}
    A subset of  the metric space $(\Knn,\deltaId)$ is compact if and only if it is sequentially compact. If $\sup_{K \in C}\diam K=\infty$ or $\inf_{K \in C}\vol K=0$, then we can find a sequence $(L_i)_{i \in \N}$ where $(\diam L_i)_{i\in \N}$ diverges to infinity or $(\vol L_i)_{i\in \N}$ converges to 0. Recalling that convergence in $(\Knn,\log \dId)$ is equivalent to convergence in the Hausdorff distance modulo translations, it follows that such a sequence does not have a convergent subsequence. For the converse statement, let $(L_i)_{i \in \N}$ be a sequence in $C$. Denoting the centroid of $L_i$ by $c_{L_i}$ and the space of compact convex sets by $\mathcal K^n$, the sequence $(L_i-c_{L_i})_{i \in \N}$ is bounded in $(\mathcal K^n,d_H)$. Now the Blaschke selection theorem implies that there exists a subsequence $(L_{k(i)}-c_{L_{k(i)}})_{i \in \N}$ that converges to a body $L^*\in \mathcal{K}^n$ in the metric $d_H$. In particular, the subsequence converges to $L^*$ modulo translations, and hence also in the metric $\deltaId$. Moreover, because volume is continuous with respect to $d_H$, we have $L^*\in \Knn$, and because $C$ is closed, $L^*\in C$.
\end{proof}

Having introduced the notion of compact recurrence, we can now state our characterization of almost sure convergence of generalized Steiner processes.

\begin{theorem} \label{theorem:characterization_tethered}
    If $\Sigma$ is compactly recurrent for $K \in \Knn$, then the random $\Sigma$-process starting at $K$ converges modulo $G$ almost surely if and only if $K$ is $\Sigma$-tethered modulo $G$. If the process converges almost surely modulo $G$, then its limit is almost surely $\Sigma$-invariant modulo $G$.
\end{theorem}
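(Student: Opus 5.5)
The plan is to use one mechanism in both directions: by Lemma~\ref{lemma:steiner_operations_are_non-expansive}, once the process comes $\delta$-close to an invariant body, it stays $\delta_G$-close to it forever. Compact recurrence then turns the deterministic notion of tetheredness into an almost-sure statement. I will first prove two auxiliary claims, (A) and (B), and then derive both implications and the final assertion.

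\emph{Claim (A): closeness to $\mathcal I_G(\Sigma)$ gives Cauchy behaviour.} Suppose that for some $t$ and some $L^*\in\mathcal I_G(\Sigma)$ we have $\deltaId(K_t,L^*)<\varepsilon$. Then Lemma~\ref{lemma:steiner_operations_are_non-expansive} gives $\delta_G(K_{t+s},L^*)<\varepsilon$ for all $s\ge0$, hence $\delta_G(K_{s},K_{s'})<2\varepsilon$ for all $s,s'\ge t$. Consequently, if almost surely $\inf_t\deltaId(K_t,\mathcal I_G(\Sigma))=0$, then $(K_t)$ is almost surely $\delta_G$-Cauchy, and by completeness of $(\Knn,\delta_G)$ it converges modulo $G$.

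To identify the limit, I use compact recurrence. It provides a subsequence $K_{t_j}\in C(\Sigma,K)$ that $\deltaId$-converges to some body $M$. For any word $\mathbf U$ and any $s\ge t$, we have $\Sigma_{\mathbf U}K_s=\Sigma_{\mathbf U'}K_t$ for a suitable word $\mathbf U'$, and $\deltaId(K_t,L^*)<\varepsilon$. Applying the second part of Lemma~\ref{lemma:steiner_operations_are_non-expansive} gives $\delta_G(\Sigma_{\mathbf U}K_s,L^*)<\varepsilon$. Passing to the limit along $t_j$, using the continuity from Lemma~\ref{lemma:steiner_operations_continuity}, yields $\delta_G(\Sigma_{\mathbf U}M,M)\le2\varepsilon$. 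Letting $\varepsilon\to0$, and using that $G$-orbits are closed, we get $\Sigma_{\mathbf U}M\in[M]_G$, so $M\in\mathcal I_G(\Sigma)$.

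\emph{Claim (B): any almost-sure limit is invariant.} Conversely, suppose that with positive probability $K_t\to[M]_G$. Choose the representative $M$ as a $\deltaId$-limit of a subsequence lying in $C(\Sigma,K)$. Fix a countable dense set of words $\mathbf V$. By Lemma~\ref{lemma:steiner_operations_continuity}, the non-expansiveness and compactness of $C(\Sigma,K)$, the next $|\mathbf V|$ random steps approximate $\Sigma_{\mathbf V}$ to within $\eta$ with a probability bounded below uniformly over starting bodies in $C(\Sigma,K)$. Since the process returns to $C(\Sigma,K)$ infinitely often and the future is independent of the past (Markov property), a Borel--Cantelli type argument shows that this happens infinitely often almost surely. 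Together with convergence this gives $\delta_G(\Sigma_{\mathbf V}M,M)=0$, so the limit representative $M$ is itself invariant. This proves the last assertion of the theorem.

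\emph{Tethered $\Rightarrow$ convergence.} Every $K_t\in\Sigma^*K$ is tethered, and by Lemma~\ref{lemma:sets_of_errant_and_tethered_bodies} the set $C'\coloneqq C(\Sigma,K)\cap\mathcal T_G(\Sigma)$ is compact. For each $L\in C'$ there is a word $\mathbf V_L$ with $\deltaId(\Sigma_{\mathbf V_L}L,\mathcal I_G(\Sigma))<\varepsilon/2$. By the two continuity statements of Lemma~\ref{lemma:steiner_operations_continuity} and non-expansiveness, the same bound with $\varepsilon$ holds for all $L'$ in a neighbourhood of $L$ and all words in a product neighbourhood of $\mathbf V_L$, which has positive Haar measure. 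A finite subcover of $C'$ gives a uniform word length $k$ and a uniform probability $p>0$. Each visit to $C'$ therefore succeeds within $k$ steps with conditional probability at least $p$. Since there are infinitely many visits, almost surely some visit succeeds, for every $\varepsilon$. Claim (A) then yields almost-sure convergence modulo $G$ with an invariant limit.

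\emph{Not tethered $\Rightarrow$ divergence.} Suppose some $L=\Sigma_{\mathbf U}K$ is errant, so $\deltaId(\Sigma^*L,\mathcal I_G(\Sigma))=\eta>0$. By Lemma~\ref{lemma:sets_of_errant_and_tethered_bodies} and continuity, the process reaches an errant body $L'$ with $\deltaId(\Sigma^*L',\mathcal I_G(\Sigma))>\eta/2$ after $|\mathbf U|$ steps with positive probability. On this event, all later $K_t$ stay at $\deltaId$-distance more than $\eta/2$ from $\mathcal I_G(\Sigma)$. If the process converged on a positive-probability part of this event, Claim (B) would give an invariant $\deltaId$-limit of a subsequence of the $K_t$. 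That limit would be at distance at least $\eta/2$ from $\mathcal I_G(\Sigma)$, a contradiction. Hence convergence fails with positive probability, so it is not almost sure.

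The main obstacle I expect is the uniform positive-probability estimate together with the Markov/Borel--Cantelli bookkeeping over the returns to the compact set, which is needed in both the forward direction and Claim (B).
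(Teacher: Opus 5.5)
Claim (A) and your forward direction are correct. The geometric-trials argument over spaced returns to $C(\Sigma,K)\cap\mathcal T_G(\Sigma)$ is a legitimate and more elementary substitute for the paper's summability argument via the Feller property.

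The gap is in Claim (B), at the step ``together with convergence this gives $\delta_G(\Sigma_{\mathbf V}M,M)=0$''. Your approximation events occur at returns $\tau$ to $C(\Sigma,K)$. At those times convergence only tells you that $K_\tau$ is $\delta_G$-close to $M$, i.e.\ $\deltaId$-close to some $A_\tau M$ with $A_\tau\in G$. Since $\Sigma_{\mathbf V}$ does not commute with $G$, the quantity $\delta_G(\Sigma_{\mathbf V}N,N)$ is not constant on $G$-classes. For Steiner symmetrization and $G$ the positive diagonal matrices, the disk is invariant modulo $G$, while the non-circular axis-parallel ellipses in its class are not. What your argument really gives is, for each pair $(\mathbf V,\eta)$, some $\deltaId$-accumulation point $M'_{\mathbf V,\eta}\in[M]_G\cap C(\Sigma,K)$ of those particular $K_\tau$ with $\delta_G(\Sigma_{\mathbf V}M'_{\mathbf V,\eta},M'_{\mathbf V,\eta})\le\eta$. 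These representatives can differ from $M$ and from each other, so no single body is shown to be invariant under all words. Your divergence direction and your proof of the last assertion both rest on Claim (B), so neither is established as written.

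There are two ways to close this.

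\emph{Localize.} Run your Borel--Cantelli argument for spaced returns to the sets $C(\Sigma,K)\cap\{L\mid\deltaId(L,N_j)<r\}$. Here $N_j$ ranges over a countable dense subset of $C(\Sigma,K)$, $r$ over positive rationals, $\mathbf V$ over your dense set of words, and $\eta$ over positive rationals. Almost surely, for all these countably many choices at once, infinitely many returns force infinitely many approximation events. Since $M$ is a $\deltaId$-accumulation point, you then get times $\tau$ with $\deltaId(K_\tau,M)<2r$ and $\deltaId(K_{\tau+|\mathbf V|},\Sigma_{\mathbf V}K_\tau)<\eta$. Non-expansiveness together with the Cauchy property then yields $\delta_G(\Sigma_{\mathbf V}M,M)\le 4r+\eta$.

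\emph{Avoid limits, as the paper does (Theorem~\ref{theorem:sufficient_condition_errant}).} On your positive-probability event, all later returns lie in the compact set $C(\Sigma,K)\cap\{N\mid \deltaId(N,\mathcal I_G(\Sigma))\ge\eta/2\}$, which contains no invariant body. Compactness gives $k^*\in\N$ and $\eta^*>0$ such that, from each point of this set, the next $k^*$ random steps reach $\delta_G$-distance at least $\eta^*$ from that point with probability at least $\eta^*$. By the strong Markov property this happens at infinitely many returns, so the path is not $\delta_G$-Cauchy. Measuring displacement from the current state, rather than proving invariance of the limit, is what sidesteps the representative problem. The last assertion then follows from ``almost sure convergence $\Rightarrow$ tethered $\Rightarrow$ Claim (A)''.
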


Intuitively speaking, the idea behind Theorem~\ref{theorem:characterization_tethered} is as follows: if a random $\Sigma$-process starts at a $\Sigma$-tethered convex body $K \in \Knn$, then it can always make progress toward the class $\mathcal I_G(\Sigma)$ of invariant bodies, the possibility of progress will be realized eventually, and the progress will never be reversed because $\Sigma$ does not increase the distance to this class by Lemma~\ref{lemma:steiner_operations_are_non-expansive}. Compact recurrence implies that if $(K_i)_{i\in\N}$ converges to $\mathcal I_G(\Sigma)$, then modulo $G$ it converges to an element of $\mathcal I_G(\Sigma)$.  This explains the sufficiency of $K$ being $\Sigma$-tethered; the necessity will be deduced from the following sufficient condition for almost sure divergence.

\begin{theorem} \label{theorem:sufficient_condition_errant}
    If $\Sigma$ is compactly recurrent for $K \in \Knn$, then the random $\Sigma$-process starting at $K$ diverges modulo $G$ almost surely if $K$ is $\Sigma$-errant modulo $G$.
\end{theorem}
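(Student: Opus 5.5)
The plan is to argue by contradiction. Suppose that with positive probability the random $\Sigma$-process $(K_t)_{t\in\N_0}$ converges modulo $G$, say to the class $[L]_G$. On this event I will produce a body $M \in [L]_G$ with two properties: $M$ is $\Sigma$-invariant modulo $G$, and $\deltaId(K_t,M)\to 0$ along a subsequence. Since every $K_t$ lies in $\Sigma^*K$, this gives $\deltaId(\Sigma^*K,\mathcal I_G(\Sigma))=0$, which contradicts the assumption that $K$ is $\Sigma$-errant modulo $G$.

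\emph{Step 1 (construction of $M$).} By compact recurrence, almost surely $K_t\in C\coloneqq C(\Sigma,K)$ for infinitely many $t$. On the convergence event there are $A_t\in G$ with $\deltaId(A_tK_t,L)\to 0$. Along the recurrence times, both $K_t$ and $A_tK_t$ stay in compact sets, so $\norm{A_t}$ and $\norm{A_t^{-1}}$ are bounded, exactly as in the proof of Lemma~\ref{lemma:set_of_invariant_bodies_is_closed}. Passing to a subsequence, $A_t\to A\in G$ and $K_t\to A^{-1}L$. More generally, every accumulation point $M\in C$ of the recurrent subsequence lies in $[L]_G$; we fix one such $M$.

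\emph{Step 2 (invariance of $M$, probabilistic part).} Fix $k$ and consider the map $(\mathbf O,N)\mapsto\Sigma_{\mathbf O}N$ on $\Orthogonal(n)^k\times C$. It is jointly continuous: this follows from Lemma~\ref{lemma:steiner_operations_continuity} together with the non-expansiveness in Lemma~\ref{lemma:steiner_operations_are_non-expansive}, by induction on $k$. By compactness it is uniformly continuous. Next, fix a countable base of open sets $B_m$ of $C$, a countable dense set of words $\mathbf V$, and rational radii $\eta$. Consider the event that, whenever $K_t\in B_m$ for infinitely many $t$, there are also infinitely many $t$ with $K_t\in B_m$ and $(U_{t+1},\dots,U_{t+k})$ $\eta$-close to $\mathbf V$. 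I will show this event has probability one. The reason is the Markov property combined with L\'evy's conditional Borel--Cantelli lemma: at each such visit time, the next block of $k$ matrices is independent of the past and lands in the $\eta$-neighbourhood of $\mathbf V$ with fixed positive Haar probability, and one restricts to visit times spaced at least $k$ apart. Intersecting over the countably many choices $(m,\mathbf V,\eta)$ still gives a full-measure event.

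\emph{Step 3 (invariance of $M$, deterministic part).} Fix a word $\mathbf V$ from the dense set and $\varepsilon>0$. Choose $\eta$ by uniform continuity so that $\eta$-close words move bodies of $C$ by less than $\varepsilon$. Choose $B_m\ni M$ so small that $\Sigma_{\mathbf V}$ moves $B_m$ within $\varepsilon$ of $\Sigma_{\mathbf V}M$, which is possible by non-expansiveness. Step 2 then yields infinitely many $t$ with
\[
\deltaId(K_{t+k},\Sigma_{\mathbf V}M)<2\varepsilon .
\]
Since $\delta_G(K_{t+k},L)\to0$, we get $\delta_G(\Sigma_{\mathbf V}M,L)\le 2\varepsilon$. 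Letting $\varepsilon\to0$ gives $\Sigma_{\mathbf V}M\in[L]_G=[M]_G$, using that $G$-orbits are closed. For arbitrary words, approximate by words from the dense set and use continuity in the matrices together with the closedness of $[M]_G$. This shows $M\in\mathcal I_G(\Sigma)$, and the contradiction with errantness follows as described at the start.

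The main obstacle is the measurability and uniformity bookkeeping in Steps 2--3. The candidate $M$ is random, so the neighbourhood of $\mathbf V$ cannot be chosen depending on $M$. I resolve this with two devices: uniform continuity on the compact set $C$, and quantifying over a countable base and a countable dense set of words before fixing $M$. The one-step estimates themselves are routine consequences of Lemmas~\ref{lemma:steiner_operations_are_non-expansive} and~\ref{lemma:steiner_operations_continuity}.
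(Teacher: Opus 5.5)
Your proof is correct, and it is organized differently from the paper's.

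\emph{The paper's route.} The paper uses errantness at the outset. No body in $S=C(\Sigma,K)\cap\overline{\Sigma^*K}$ is $\Sigma$-invariant modulo $G$. Compactness of $S$, together with openness of the sets $M_k$ and continuity of the threshold function $\eta(L)$, then yields a uniform word length $k^*$ and a uniform $\eta^*>0$. With these, at every visit to $C$ (stopping times spaced $k^*$ apart), the process moves by at least $\eta^*$ in $\delta_G$ within the next $k^*$ steps with conditional probability at least $\eta^*$. The strong Markov property makes such jumps happen infinitely often, so the path is almost surely not Cauchy.

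\emph{Your route.} You prove a pathwise, structural statement, using errantness only in the final contradiction: almost surely, if the path converges modulo $G$, then every accumulation point in $C$ is $\Sigma$-invariant modulo $G$. The device is a full-measure event quantified over a countable base, a countable dense set of words and rational radii, obtained via conditional Borel--Cantelli at visit times spaced $k$ apart.

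\emph{Comparison.} The paper's approach gives quantitative, uniform control: a jump of size $\eta^*$ with probability at least $\eta^*$ at every visit. It also needs no countable bookkeeping. Your approach avoids constructing $\eta(L)$ and proving its continuity, and avoids extracting a uniform $k^*$. It also yields, for any starting body and without tetheredness, that an almost-sure limit is invariant. The paper obtains that fact separately, and only in the tethered case, at the end of the proof of Theorem~\ref{theorem:characterization_tethered}.

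Two minor simplifications are available.
\begin{itemize}
\item Uniform continuity on $\Orthogonal(n)^k\times C$ is not needed. Your full-measure event is fixed before $M$ is chosen, so $\eta$ and $B_m$ may depend on the random $M$, and joint continuity at $(\mathbf V,M)$ suffices.
\item In Step~1, $M\in[L]_G$ follows directly from $\delta_G(M,L)\le\deltaId(M,K_t)+\delta_G(K_t,L)$ and closedness of $G$-orbits, without bounding $\norm{A_t}$ and $\norm{A_t^{-1}}$.
\end{itemize}
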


The intuitive idea behind Theorem~\ref{theorem:sufficient_condition_errant} is complementary to the one behind Theorem~\ref{theorem:characterization_tethered}: if a random $\Sigma$-process starts at or reaches a $\Sigma$-errant convex body $K\in \Knn$, then it has no chance of arriving at the class $\mathcal I_G(\Sigma)$, and will wander through $\Knn$ until the ends of time.

Before we proceed to prove Theorem~\ref{theorem:characterization_tethered} and Theorem~\ref{theorem:sufficient_condition_errant}, we need a further technical lemma.

\begin{lemma} \label{lemma:transition_kernel_feller_property}
    Let $\Sigma$ be a Steiner operation and let $(K_t)_{t\in \N_0}$ be the random $\Sigma$-process starting at $K$. For any $k \in \N$, the $k$-step transition kernel
    \begin{equation}
        \kappa^k(L,A) \coloneqq \Prob[K_{t+k} \in A \mid K_t=L]
    \end{equation}
    has the Feller property, i.e., for every $f \in C_b(\Knn)$ the following holds: If $(K_i)_{i \in \N}$ converges to $K$ in $\Knn$, then
    \begin{equation}
        \int f \dint\kappa^k(K_i,\cdot) \goestoinfty{i} \int f \dint\kappa^k(K,\cdot).
    \end{equation}
\end{lemma}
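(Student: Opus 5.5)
The plan is to write the $k$-step kernel explicitly as an integral over the $k$ random rotations and then apply dominated convergence. Because the matrices $U_i$ are i.i.d.\ with law $\mu$ and independent of the past, the Markov property gives
\begin{equation}
    \int f \dint\kappa^k(L,\cdot) = \int_{\Orthogonal(n)^k} f(\Sigma_{U_k}\cdots\Sigma_{U_1}L) \dint\mu^{\otimes k}(U_1,\dots,U_k)
\end{equation}
for every $L\in\Knn$ and every bounded measurable $f$. The Feller property is then reduced to two things: exchanging a limit in $L$ with this integral, and pointwise continuity of $L \mapsto \Sigma_{\mathbf U}L$ for each fixed word $\mathbf U=(U_1,\dots,U_k)$.

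First I have to check that the integrand is measurable, so that the formula makes sense and $\kappa^k$ is a genuine kernel. The map $(U,L)\mapsto\Sigma_UL$ is separately continuous: Lemma~\ref{lemma:steiner_operations_continuity} gives continuity in each variable. It is in fact jointly continuous, since
\begin{equation}
    \deltaId(\Sigma_UL,\Sigma_{U'}L')\leq \deltaId(\Sigma_UL,\Sigma_UL')+\deltaId(\Sigma_UL',\Sigma_{U'}L')\leq \deltaId(L,L')+\deltaId(\Sigma_UL',\Sigma_{U'}L').
\end{equation}
The first term uses Lemma~\ref{lemma:steiner_operations_are_non-expansive}. For the second term I argue as in the proof of Lemma~\ref{lemma:steiner_operations_continuity}: with $O=U'U^{-1}$, rotation-equivariance rewrites it in terms of $O$ acting on bodies. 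Continuity of the $\Orthogonal(n)$-action on $(\Knn,\deltaId)$ and non-expansiveness then handle the dependence on $L'$ uniformly near $L$.

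Given joint continuity, the map $(U_1,\dots,U_k,L)\mapsto\Sigma_{U_k}\cdots\Sigma_{U_1}L$ is continuous by induction on $k$, and so Borel measurable. Even without joint continuity, a map that is continuous in $L$ for fixed $U$ and continuous in $U$ for fixed $L$ is a Carathéodory map on a separable metric space, hence jointly measurable. Either route suffices.

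For the main step, let $K_i\to K$ and fix $f\in C_b(\Knn)$. For every fixed word $\mathbf U$, iterating Lemma~\ref{lemma:steiner_operations_are_non-expansive} gives
\begin{equation}
    \deltaId(\Sigma_{\mathbf U}K_i,\Sigma_{\mathbf U}K)\leq\deltaId(K_i,K)\to0 .
\end{equation}
Hence $f(\Sigma_{\mathbf U}K_i)\to f(\Sigma_{\mathbf U}K)$ pointwise, in fact uniformly in $\mathbf U$ if $f$ is uniformly continuous. The integrands are bounded by $\sup|f|$ and $\mu^{\otimes k}$ is a probability measure, so dominated convergence yields the claimed convergence of integrals.

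The only step needing real care is the identification of $\kappa^k(L,\cdot)$ with the pushforward of $\mu^{\otimes k}$ under $\mathbf U\mapsto\Sigma_{\mathbf U}L$, which requires the measurability discussed above. Once that is in place, the argument is immediate from the non-expansiveness established in Lemma~\ref{lemma:steiner_operations_are_non-expansive}.
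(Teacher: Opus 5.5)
Your proposal is correct and is essentially the paper's proof: both write $\int f \dint\kappa^k(L,\cdot)$ as $\int_{\Orthogonal(n)^k} f(\Sigma_{\mathbf U}L)\dint\mu^{\otimes k}(\mathbf U)$ and conclude by dominated convergence, using that $L\mapsto\Sigma_{\mathbf U}L$ is continuous (indeed non-expansive) and that $f$ is bounded. Your extra discussion of measurability, via joint continuity of $(U,L)\mapsto\Sigma_UL$, covers a point the paper leaves implicit, and your sketch of it is sound.
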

\begin{proof}
    We have
    \begin{align}
        \int f \dint\kappa^k(K_i,\cdot)=\int_{\Orthogonal(n)^k} f\left(\Sigma_{\mathbf U}K_i\right) \dint \mu^{\otimes k}( \mathbf U) \goestoinfty{i}\int_{\Orthogonal(n)^k} f\left(\Sigma_{\mathbf U} K\right) \dint\mu^{\otimes k}( \mathbf U)= \int f \dint\kappa^k(K,\cdot),
    \end{align}
    where the second step follows from Lebesgue's dominated convergence theorem, using that $f \circ \Sigma_{\mathbf U}$ is continuous and that $f$ is bounded.
\end{proof}

Because Theorem~\ref{theorem:sufficient_condition_errant} is used in the proof of Theorem~\ref{theorem:characterization_tethered}, it makes sense to prove it first.

\begin{proof}[Proof of Theorem~\ref{theorem:sufficient_condition_errant}]
    Let $K$ be $\Sigma$-errant and let $(K_t)_{t\in \N_0}$ be the random $\Sigma$-process starting at $K$. Let $C(\Sigma,K)$ be as in Definition \ref{definition:compact_recurrence}. Since $L \mapsto \Sigma_\mathbf{U}L$ is continuous for every $\mathbf U \in \Orthogonal(n)^+$ by Lemma~\ref{lemma:steiner_operations_continuity}, the set
    \begin{equation}
        M_k \coloneqq  \left\{L \in \Knn \MID \exists U_1,\dots,U_k\in \Orthogonal(n)\colon \textstyle\max_{i \in [k]}\delta_G(\Sigma_{U_i}\dots \Sigma_{U_1}L,L)>0\right\}
    \end{equation}
    is open for every $k \in \N$. Moreover, every $L \in \Knn$ that is not $\Sigma$-invariant modulo $G$ is contained in $M_k$ for some $k \in \N$ and hence also in $M_j$ for all $j\geq k$. Since $K$ is $\Sigma$-errant, every $L \in \overline{\Sigma^*K}$ is not $\Sigma$-invariant, and since
    \begin{equation}
        S\coloneqq C(\Sigma,K) \cap \overline{\Sigma^*K}
    \end{equation}
    is compact, it follows that there exists a $k^* \in \N$ such that $S\subset M_{k^*}$. We define a sequence of stopping times via $\tau_0=0$ and
    \begin{equation} \label{eq:definition_stopping_times_tau}
        \tau_i \coloneqq \inf\{t\in \N\mid K_t \in C(\Sigma,K), t\geq \tau_{i-1}+k^*\} \quad \text{ for i} \in \N,
    \end{equation}
    where $\inf \emptyset =\infty$. By the defining assumption about $C(\Sigma,K)$, we have
    \begin{equation}
        \Prob(\tau_i<\infty \text{ for all } i\in \N)=1.
    \end{equation}
        For $L \in \Knn$, we consider the continuous function
    \begin{equation}
        f_L\colon \Orthogonal(n)^{k^*} \rightarrow \R ,\quad (U_1,\dots,U_{k^*}) \mapsto \textstyle\max_{i \in [k*]}\delta_G(\Sigma_{U_i}\dots \Sigma_{U_1}L,L),
    \end{equation}
    and set
    \begin{equation}
        \eta(L) \coloneqq \sup \{\eta\geq 0 \mid \mu^{\otimes k^*}(f_L\geq \eta)\geq \eta\}.
    \end{equation}
    We note that $\eta(L)>0$ for every $L \in S\subset M_{k^*}$, because $f_L$ is continuous and there exists a $\mathbf U\in \Orthogonal(n)^{k^*}$ with $f_L(\mathbf{U})>0$. Moreover, by the non-expansiveness of $\Sigma$ from Lemma~\ref{lemma:steiner_operations_are_non-expansive}, we have
    \begin{equation}
        |\eta(L_1)-\eta(L_2)|\leq \norm{f_{L_1}-f_{L_2}}_\infty\leq 2 \deltaId(L_1,L_2) \quad \text{for all } L_1,L_2\in \Knn,
    \end{equation}
    which shows that $K\mapsto\eta(K)$ is continuous. 
    Using again the compactness of $S$, we obtain that $K\mapsto \eta(K)$ assumes a positive minimum $\eta^*$ on $S$. We denote the event $\max_{j \in [k^*]}\delta_G(K_{\tau_i+j},K_{\tau_i})\geq \eta^*$ by $A_i$ and write $\mathcal F_{\tau_i}$ for the $\sigma$-algebra at the stopping time $\tau_i$, which is defined with respect to the natural filtration $\mathcal F_t=\sigma(U_1,\dots,U_t)$. Because $(K_t)_{t\in\N}$ is a Markov process in discrete time, it satisfies the strong Markov property, and we get
    \begin{equation}
        \Prob[A_i|\mathcal F_{\tau_i}]=\mu^{\otimes k^*}(f_{K_{\tau_i}}\geq \eta^*)\geq \eta^* \quad \text{for all }i \in \N.
    \end{equation}
    Using that $A_j \in \mathcal F_{\tau_i}$ for $j\leq i-1$, this implies
    \begin{equation}
        \Prob\left[A_{\ell+i}^\complement\MID\textstyle \bigcap_{j=0}^{i-1} A^\complement_{\ell+j}\right]\leq 1-\eta^* \quad \text{for all } i,\ell \in \N.
    \end{equation}
    By induction, it follows that
    \begin{equation}
        \Prob\left[\textstyle \bigcap_{j=0}^\infty A^\complement_{\ell+j}\right]\leq \prod_{j=1}^\infty (1-\eta^*) = 0 \quad \text{for all } \ell \in \N.
    \end{equation}
    Since, almost surely, infinitely many of the events $\{A_i\}_{i\in\N}$ occur, the random process $(K_t)_{t \in \N_0}$ is almost surely not a Cauchy sequence.%
\end{proof}

We are now ready to prove our characterization of almost sure convergence. The proof of Theorem~\ref{theorem:characterization_tethered} is inspired by Mani-Levitska's classical proof \cite{ml86} of almost sure convergence of random Steiner processes and Volčič's generalization thereof \cite{v13}, as well as Burchard--Fortier's variant concerning random polarizations \cite{bf13}. In particular, the summability argument in the second part is adapted from the proof of \cite[Thm.~2.1]{bf13}. A related adaptation of Burchard--Fortier's argument to the abstract setting of \emph{symmetrization spaces} is provided by Dekeyser--Van~Schaftingen \cite[Sect.~2]{dvs17}; we refer to Remark~\ref{remark:comparison_dvs17} for a comparison with our approach.%

\begin{proof}[Proof of Theorem~\ref{theorem:characterization_tethered}]
    We begin with the case where $K$ is not $\Sigma$-tethered. By definition, there exists a finite sequence $\mathbf U \in \Orthogonal(n)^k$ such that $\Sigma_\mathbf UK$ is $\Sigma$-errant. We consider the continuous map
    \begin{equation}
        h\colon\Orthogonal(n)^k \rightarrow \Knn, \quad \mathbf V\mapsto \Sigma_\mathbf VK.
    \end{equation}
    In view of Lemma~\ref{lemma:sets_of_errant_and_tethered_bodies} and the fact that $\mathbf U \in h^{-1}(\mathcal E_G(\Sigma))$, the set $M\coloneqq h^{-1}(\mathcal E_G(\Sigma))$ is open and non-empty, so we have $\mu^{\otimes k}(M)>0$. In other words, the probability that $K_k$ is $\Sigma$-errant is positive. Denoting the random $\Sigma$-process starting at $L \in \Knn$ by $(L_t)_{t\in\N_0}$, the fact that $(K_t)_{t\in \N_0}$ is a time-homogeneous Markov process implies that
    \begin{equation}
        \Prob[(K_t)_{t\in\N_0} \text{converges} | K_k=L]=\Prob[(L_t)_{t\in\N_0} \text{converges}]\quad \text{almost surely}.
    \end{equation}
    If $L$ is $\Sigma$-errant and $\Sigma$ is compactly recurrent for $L$, then Theorem~\ref{theorem:sufficient_condition_errant} implies that the right-hand side is zero. Since $\Sigma$ is compactly recurrent for $K$, it is compactly recurrent for $K_k$ almost surely, and we get
    \begin{equation}
        \Prob[(K_t)_{t\in\N_0} \text{diverges}]\geq \Prob[K_k \text{ is $\Sigma$-errant}]=\mu^{\otimes k}(M)>0.
    \end{equation}
    
    Now let $K$ be $\Sigma$-tethered and let $C(\Sigma,K)$ be as in Definition \ref{definition:compact_recurrence}. For $\varepsilon>0$, let
    \begin{equation}
        \mathcal I_\varepsilon \coloneqq \{L \in \Knn \mid \deltaId(L,\mathcal I_G(\Sigma))<\varepsilon\} \quad \text{and} \quad f\coloneqq \min \{ \deltaId(\cdotbox,\mathcal I_G(\Sigma)),2\varepsilon\}.
    \end{equation}
    It follows from Definition~\ref{definition:sigma_invariant} that $L^* \in \mathcal I_G(\Sigma)$ implies $\Sigma^*L^*\subset\mathcal I_G(\Sigma)$. Using the non-expansiveness of $\Sigma$ from Lemma~\ref{lemma:steiner_operations_are_non-expansive}, this shows that $L \in \mathcal I_\varepsilon$ implies $\Sigma^*L\subset \mathcal I_\varepsilon$. Moreover, it shows that $f$ is $\Sigma$-monotone, i.e., $f$ is never increased by applying $\Sigma$.
    
    We will first show that
    \begin{equation} \label{equation:proof_of_characterizatin_intermediate_claim}
        \Prob[K_t \in \mathcal I_\varepsilon \text{ for almost all } t\in \N]=\Prob[K_t \in \mathcal I_\varepsilon \text{ for some } t\in \N]=1.
    \end{equation}
    Because $L \mapsto \Sigma_\mathbf{U}L$ is continuous for every $\mathbf U \in \Orthogonal(n)^+$, the set
    \begin{equation}
        M_k\coloneqq \{L \in \Knn \mid \exists \mathbf U\in \Orthogonal(n)^k\colon \Sigma_\mathbf UL \in \mathcal I_\varepsilon\}
    \end{equation}
    is open for every $k \in \N$. Moreover, every $\Sigma$-tethered body $L \in \Knn$ is contained in $M_k$ for some $k \in \N$ and we have $M_k \subset M_{k+1}$ for all $k \in \N$. Since the set
    \begin{equation}
        S\coloneqq \mathcal T_G(\Sigma) \cap C(\Sigma,K)
    \end{equation}
    is compact by Lemma~\ref{lemma:sets_of_errant_and_tethered_bodies}, there exists a $k^* \in \N$ with $S \subset M_{k^*}$. This shows that for every $L \in S \cap  (\mathcal{I}_\varepsilon)^\complement$, there exists a sequence of $k^*$ matrices $\mathbf U \in \Orthogonal(n)^{k^*}$ with
    \begin{equation} \label{eq:progress_in_k*_steps_is_possible}
        f(L)\geq\varepsilon>f(\Sigma_{\mathbf U} L).
    \end{equation}
    In words, \eqref{eq:progress_in_k*_steps_is_possible} simply says that if $L$ has distance at least $\varepsilon$ to the set of $\Sigma$-invariant bodies modulo $G$, then this distance can be made strictly smaller than $\varepsilon$ by a suitable sequence of $k^*$ applications of $\Sigma$ to $L$. Because
    \begin{equation}
        \Orthogonal(n)^{k^*} \rightarrow \R, \quad \mathbf U \mapsto f(\Sigma_{\mathbf U} L)
    \end{equation}
    is continuous, these considerations imply that
    \begin{equation}
        \phi(L)\coloneqq f(L)-\E_{\mu^{\otimes k^*}}[f(\Sigma_\mathbf UL)]=f(L)-\int_{\mathbb \Orthogonal(n)^{k^*}} f(\Sigma_\mathbf UL) \dint \mu^{\otimes k^*}(\mathbf U)>0 \quad \text{for all }L \in S \cap \mathcal (I_\varepsilon)^\complement.
    \end{equation}
    By Lemma~\ref{lemma:transition_kernel_feller_property}, the $k^*$-step transition kernel has the Feller property, so the map $L \mapsto \phi(L)$ is continuous. If the set $S \cap \mathcal (I_\varepsilon)^\complement$ is empty, then \eqref{equation:proof_of_characterizatin_intermediate_claim} is immediate. Otherwise, the map $L \mapsto \phi(L)$ attains its infimum $\delta^*>0$ on the compact set $S \cap \mathcal (I_\varepsilon)^\complement$.
    
    We define a sequence of stopping times via $\tau_0=0$ and
    \begin{equation} 
        \tau_i \coloneqq \inf\{t\in \N\mid K_t \in C(\Sigma,K), t\geq \tau_{i-1}+k^*\} \quad \text{ for i} \in \N.
    \end{equation}
    Again using the fact that $f$ is $\Sigma$-monotone as well as the strong Markov property, we get
    \begin{equation}
        f(K)\geq\sum_{i=1}^\infty\E[f(K_{\tau_{i}})-f(K_{\tau_{i+1}})]
        \geq\sum_{i=1}^\infty\E[f(K_{\tau_{i}})-f(K_{\tau_{i}+k^*})] =\sum_{i=1}^\infty\E[f(K_{\tau_i})-\E_{\mu^{\otimes k^*}}[f(\Sigma_\mathbf UK_{\tau_i})]].
    \end{equation}
    Now we observe that $K_{\tau_i}\in S$ almost surely, by the definition of $C(\Sigma,K)$ and the fact that every $L\in \Sigma^*K$ is $\Sigma$-tethered. By the definition of $\delta^*$, it follows that
    \begin{equation}
        f(K)\geq\sum_{i=1}^\infty\E[f(K_{\tau_i})-\E_{\mu^{\otimes k^*}}[f(\Sigma_\mathbf UK_{\tau_i})]] \geq \sum_{i=1}^\infty \Prob[K_{\tau_i} \notin \mathcal I_\varepsilon] \cdot\delta^*,
    \end{equation}
    and because $\delta^*>0$, this shows that $(\Prob[K_{\tau_i} \notin \mathcal I_\varepsilon])_{i \in \N}$ is summable. In particular, $\Prob[K_{\tau_i}\notin \mathcal I_\varepsilon]\goestoinfty{i}0$. Since the events $\{K_{\tau_i}\notin \mathcal I_\varepsilon\}$ are decreasing, this shows \eqref{equation:proof_of_characterizatin_intermediate_claim}, i.e., that $(K_t)_{t \in \N}$ almost surely visits $\mathcal I_\varepsilon$ and never leaves again. Considering $\varepsilon=\frac1m$ for $m \in \N$, we get $\Prob[\deltaId(K_t,\mathcal I_G(\Sigma))\goestoinfty{t}0]=1.$
    
    It remains to show that, for a given path of the Markov process (corresponding to a sequence of matrices $U_i$, $i \in \N$) that visits $C(\Sigma,K)$ infinitely often, the condition $\deltaId(K_{t},\mathcal I_G(\Sigma))\goestoinfty{t}0$ implies that $(K_t)_{t\in \N_0}$ converges to a $\Sigma$-invariant body modulo $G$. %
    Because $C(\Sigma,K)$ is compact, such a path admits a subsequence $(K_{t(i)})_{i \in \N}$ that converges to a body $L^* \in C(\Sigma,K)$. Because $L \mapsto \deltaId(L, \mathcal I_G(\Sigma))$ is continuous, we have $\deltaId(L^
    *, \mathcal I_G(\Sigma))=0$ and hence, by Lemma~\ref{lemma:set_of_invariant_bodies_is_closed}, $L^* \in \mathcal I_G(\Sigma)$. For any $\varepsilon>0$, there exists an $i \in \N$ with $\deltaId(K_{t(i)},L^*)<\varepsilon$. But now Lemma~\ref{lemma:steiner_operations_are_non-expansive} implies that $\delta_G(K_{t(i)+k},L^*)\leq \deltaId(K_{t(i)},L^*)<\varepsilon$ for all $k\in \N$.
\end{proof}

\begin{remark}
    If $\Sigma$ is compactly recurrent and every $K \in \Knn$ is $\Sigma$-tethered modulo $G$, then Theorem~\ref{theorem:characterization_tethered} implies that all $\Sigma$-processes starting anywhere in $\Knn$ converge \emph{simultaneously} modulo $G$ almost surely. As observed by Burchard--Fortier \cite[p.~554]{bf13} in the context of random polarizations, this follows because $\Sigma$ is non-expansive and $\Knn$ is separable.
\end{remark}

\begin{remark} \label{remark:non-evanescence}
    Theorem~\ref{theorem:characterization_tethered} characterizes almost sure convergence of the random $\Sigma$-process starting at $K$, given that $\Sigma$ is compactly recurrent for $K$. Since for a given path $U_1,U_2,\dots$, the sequence $(\Sigma_{U_i}\dots\Sigma_{U_1}K)_{i\in\N}$ can only converge modulo $\{\Id\}$ if it visits a compact set infinitely often, it is tempting to think that compact recurrence (for the initial body $K$) is logically necessary for almost sure convergence modulo $\{\Id\}$. But this does not follow, because the set $C(\Sigma,K)$ from Definition~\ref{definition:compact_recurrence} is required to be independent of the sample path. This line of reasoning suggests to replace the assumption of compact recurrence by the weaker assumption of non-evanescence \cite{mt93}, i.e., the assumption that
    \begin{equation} \label{eq:compact_recurrence}
        \Prob[(K_t)_{t\in \N} \text{ visits every compact set at most finitely often}] =0.
    \end{equation}
    There seems to be no apparent reason why Theorem~\ref{theorem:characterization_tethered} and Theorem~\ref{theorem:sufficient_condition_errant} should fail if compact recurrence is replaced by the weaker assumption \eqref{eq:compact_recurrence}, although some additional technical issues have to be considered. Since this conjectural strengthening is irrelevant for Question~\ref{question:precise_question}, we leave it open at this point.
\end{remark}

As noted by Bianchi--Burchard--Gronchi--Volčič, ``[m]ost constructions of convergent sequences of Steiner symmetrizations rely on auxiliary geometric functionals, such as the perimeter and the moment of inertia, that decrease monotonically along the sequence'' \cite[p.~1695]{bbgv12}. Such auxiliary functionals are also used to establish almost sure convergence by Mani-Levitska \cite{ml86}, Volčič \cite{v13}, Burchard--Fortier \cite{bf13}, and are called \emph{asymmetries} in the abstract framework of Dekeyser--Van~Schaftingen \cite[Sect.~2]{dvs17}. The following proposition shows that the existence of such a functional implies that every $K\in \Knn$ is $\Sigma$-tethered modulo $G$; conversely, if every $K\in \Knn$ is $\Sigma$-tethered modulo $G$, then the distance to $\mathcal I_G(\Sigma)$ is such a functional.

\begin{proposition}
    Let $\Sigma$ be compactly recurrent. We assume the existence of a continuous geometric functional $f \colon \Knn \rightarrow \R$ that is monotone with respect to $\Sigma$ and has the following property: For every $L \in \Knn$ that is not $\Sigma$-invariant modulo $G$, there exists a $\mathbf U \in \Orthogonal(n)^+$ such that $f(\Sigma_{\mathbf U}L)<f(L)$. Then every $K\in \Knn$ is $\Sigma$-tethered modulo $G$.
\end{proposition}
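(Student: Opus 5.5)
The plan is to argue by contradiction and reuse the stopping-time and summability scheme from the proof of Theorem~\ref{theorem:characterization_tethered}, with $f$ in place of the truncated distance to $\mathcal I_G(\Sigma)$. Since every element of $\Sigma^*K$ is itself a body in $\Knn$, it suffices to show that no $L\in\Knn$ is $\Sigma$-errant modulo $G$. So suppose $L$ is $\Sigma$-errant, and let $(L_t)_{t\in\N_0}$ be the random $\Sigma$-process starting at $L$. Let $C\coloneqq C(\Sigma,L)$ be the compact set from Definition~\ref{definition:compact_recurrence}, and put $S\coloneqq C\cap\overline{\Sigma^*L}$. Then $S$ is compact. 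Errancy means $\deltaId(\Sigma^*L,\mathcal I_G(\Sigma))>0$, so $\overline{\Sigma^*L}$ is disjoint from $\mathcal I_G(\Sigma)$. Hence no element of $S$ is $\Sigma$-invariant modulo $G$.

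\emph{Step 1: uniform expected decrease on $S$.} For $M\in S$, the hypothesis gives a word $\mathbf U\in\Orthogonal(n)^k$ with $f(\Sigma_{\mathbf U}M)<f(M)$. By Lemma~\ref{lemma:steiner_operations_continuity}, the map $\mathbf V\mapsto f(\Sigma_{\mathbf V}M)$ is continuous, so the strict inequality persists on an open set of words, which has positive $\mu^{\otimes k}$-measure. Monotonicity of $f$ gives $f(\Sigma_{\mathbf V}M)\le f(M)$ for every word $\mathbf V$. Padding a word with further letters never increases $f$, so the sets
\begin{equation}
    M_k\coloneqq\{M\in\Knn\mid \exists\,\mathbf U\in\Orthogonal(n)^k\colon f(\Sigma_{\mathbf U}M)<f(M)\}
\end{equation}
are increasing in $k$. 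They are open by continuity of $f$ and of $M\mapsto\Sigma_{\mathbf U}M$, and they cover $S$. By compactness there is a single $k^*$ with $S\subset M_{k^*}$. Define
\begin{equation}
    \phi(M)\coloneqq f(M)-\int f(\Sigma_{\mathbf U}M)\dint\mu^{\otimes k^*}(\mathbf U).
\end{equation}
Then $\phi>0$ on $S$, and $\phi$ is continuous by the Feller property (Lemma~\ref{lemma:transition_kernel_feller_property}). Hence $\phi\ge\delta^*>0$ on $S$.

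\emph{Step 2: summability contradiction.} Define stopping times $\tau_i$ exactly as in \eqref{eq:definition_stopping_times_tau}, with $C$ in place of $C(\Sigma,K)$. These are almost surely finite, and $L_{\tau_i}\in S$. Since $f$ is $\Sigma$-monotone, $f(L_{\tau_{i+1}})\le f(L_{\tau_i+k^*})$. The strong Markov property then gives
\begin{equation}
    \E[f(L_{\tau_1})-f(L_{\tau_{m+1}})]\ge\sum_{i=1}^{m}\E\bigl[f(L_{\tau_i})-f(L_{\tau_i+k^*})\bigr]=\sum_{i=1}^m\E[\phi(L_{\tau_i})]\ge m\,\delta^*.
\end{equation}
The left-hand side is at most $\max_S f-\min_S f<\infty$, because both $L_{\tau_1}$ and $L_{\tau_{m+1}}$ lie in the compact set $S$ and $f$ is continuous there. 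Letting $m\to\infty$ gives a contradiction. Therefore no body is $\Sigma$-errant, and every $K$ is $\Sigma$-tethered modulo $G$.

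The main obstacle is that $f$ need not be bounded below on all of $\Knn$. This is why the telescoping sum is evaluated only at the times $\tau_i$, where the process is known to lie in the compact set $S$. A second point to check is that the recurrence set $C(\Sigma,L)$ belongs to the errant body $L$ itself and not to some other body; I avoid this issue by working with the process started at $L$ throughout. The remaining delicate step is the measurability and conditioning in the strong Markov step, which I would handle verbatim as in the proof of Theorem~\ref{theorem:characterization_tethered}.
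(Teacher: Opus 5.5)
Your proof is correct, but it takes a different route from the paper's.

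The paper uses a soft extremal argument. For $L\in\Sigma^*K$ it sets $m\coloneqq\inf f(\Sigma^*L\cap C(\Sigma,L))$. By compactness of $C(\Sigma,L)$ it takes a minimizing sequence $L_i\in\Sigma^*L\cap C(\Sigma,L)$ converging to some $L^*$ with $f(L^*)=m$, and shows directly that $L^*\in\mathcal I_G(\Sigma)$. Otherwise some word $\mathbf U$ gives $f(\Sigma_{\mathbf U}L_i)<m$ for large $i$. By continuity, the process started at $L$ then enters $\{f<m\}$ with positive probability. By monotonicity it can never again visit $\Sigma^*L\cap C(\Sigma,L)$, which contradicts recurrence.

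You instead transplant the Burchard--Fortier summability scheme from the proof of Theorem~\ref{theorem:characterization_tethered}: a uniform $k^*$ and $\delta^*$ by compactness, stopping times, the strong Markov property and telescoping. The paper's argument is shorter. It needs no uniform $k^*$, no Feller property and no strong Markov property, only one positive-probability event and the fact that sublevel sets of $f$ are absorbing. Your version is heavier but quantitative: each $k^*$-spaced return to $C$ costs at least $\delta^*$ of the bounded quantity $f$ in expectation. It also runs this proposition and Theorem~\ref{theorem:characterization_tethered} on the same mechanism.

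One small technical point: Lemma~\ref{lemma:transition_kernel_feller_property} is stated for $f\in C_b(\Knn)$, whereas a geometric functional such as volume need not be bounded. This is harmless. You can replace $f$ by $\arctan\circ f$, which is bounded, continuous and $\Sigma$-monotone, and still strictly decreases along the same words. Alternatively, dominated convergence still applies, because the bodies $\Sigma_{\mathbf U}M_i$, $\mathbf U\in\Orthogonal(n)^{k^*}$, stay in a compact set when $M_i\to M$.
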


\begin{proof}
    Let $K \in \Knn$ and $L \in \Sigma^*K$. We have to show that $\deltaId(\Sigma^*L,\mathcal I_G(\Sigma))=0$. Let $C(\Sigma,L)$ be as in Definition~\ref{definition:compact_recurrence} and set $m \coloneqq \inf f(\Sigma^*L \cap C(\Sigma,L)) \in \R$. Because $C(\Sigma,L)$ is compact, there exists a convergent sequence $(L_i)_{i \in \N}$ in $\Sigma^*L \cap C(\Sigma,L)$ with limit $L^*$ such that $(f(L_i))_{i \in \N}$ converges to $f(L^*)=m$. We assume towards a contradiction that $L^*$ is not $\Sigma$-invariant modulo $G$. By assumption, there exists a $\mathbf U \in \Orthogonal(n)^+$ with $f(\Sigma_{\mathbf U}L^*)<f(L^*)$. Since $(f(\Sigma_{\mathbf U}L_i))_{i \in \N}$ converges to $f(\Sigma_{\mathbf U}L^*)<m$, there exists an $i \in \N$ with $f(\Sigma_{\mathbf U}L_i)<m$. By the continuity of $f$, it follows that the random $\Sigma$-process starting at $L$ visits the sublevel set $f^{-1}((-\infty,m))$ with positive probability. Because $f$ is monotone with respect to $\Sigma$, it cannot return to $C(\Sigma,L)$ from there, which is a contradiction to the fact that $C(\Sigma,L)$ is visited infinitely often almost surely. It follows that $L^*$ is $\Sigma$-invariant modulo $G$, which implies the claim.    
\end{proof}

\begin{remark} \label{remark:comparison_dvs17}
    In using continuity and non-expansiveness to transfer Burchard--Fortier's summability argument \cite{bf13} to an abstract setting, we follow Dekeyser--Van~Schaftingen's \cite[Sect.~2]{dvs17} study of random convergence of symmetrization processes. In their elegant framework, a \emph{$\star$-compatible symmetrization space} $S$ is a family of continuous, idempotent and non-expansive operations acting on a metric space $X$, where $X$ is equipped with a non-expansive and idempotent map $\star$ that assigns an invariant object $K^\star$ to every $K\in X$ (generalizing the role of the Euclidean ball for Steiner symmetrization). Assuming that the forward orbit of each $K \in X$ under $S$ is precompact, Dekeyser--Van~Schaftingen provide a characterization of simultaneous almost sure convergence of all random $S$-processes starting at the elements of $X$ in terms of the existence of invariant objects that are distinct from $K^\star$ \cite[Thm.~2.8]{dvs17}. This characterization requires that $S$ admits a \emph{strict asymmetry} (i.e., a strictly monotone auxiliary functional) in the sense of \cite[Def.~2.3]{dvs17}; this assumption has to be added to the statement of \cite[Thm.~2.8]{dvs17} and the strict asymmetry has to be substituted for $\mathcal A(K)=d(K,K^\star)$ in the proof. Although their basic framework is related to ours, Dekeyser--Van~Schaftingen are concerned with different technical issues than the ones imposed by Question~\ref{question:precise_question}, and their results are not directly applicable to our problem.
\end{remark}

Based on the criteria that we proved in this section, it is straightforward to derive the special cases from Section~\ref{section:introduction}.

\begin{proof}[Proof of Theorem~\ref{theorem:sufficient_condition_for_convergence_simple_version}]
    If $\overline{\Sigma^*K}$ is compact, then $\Sigma$ is obviously compactly recurrent for $K$. Because $L^*$ is $\Sigma$-invariant modulo $G$, the homogeneity of $\Sigma$ implies that $rL^*\in\mathcal I_G(\Sigma)$ for all $r>0$. Since $L^*$ is $\Sigma$-universal, for every $L\in \Knn$ there exists a number $r_L>0$ with 
    \begin{equation}
        0=\deltaId(\Sigma^*L,r_LL^*)\geq \deltaId(\Sigma^*L,\mathcal I_G(\Sigma)).  
    \end{equation}
    This shows that every $L\in \Knn$ is $\Sigma$-tethered. Now Theorem~\ref{theorem:characterization_tethered} implies that the random $\Sigma$-process starting at $K$ almost surely converges modulo $G$, and that the limit is almost surely $\Sigma$-invariant modulo $G$. By Lemma~\ref{lemma:sigma_invariance_and_sigma_universality}, this implies that the limit is almost surely $G$-equivalent to a homothet of $L^*$.
\end{proof}

\begin{proof}[Proof of Theorem~\ref{theorem:sufficient_condition_for_divergence_simple_version}]
    Again, the compact recurrence of $\Sigma$ for $K$ follows from the compactness of $\overline{\Sigma^*K}$. Since there is no $\Sigma$-invariant body modulo $G$, every $L\in\Knn$ is $\Sigma$-errant modulo $G$, and Theorem~\ref{theorem:sufficient_condition_errant} implies that the random $\Sigma$-process starting at $K$ almost surely diverges modulo $G$.
\end{proof}

In order to apply the criteria that were established in this section to $\lambda$-Steiner symmetrization, we still have to verify that $\lambda$-Steiner symmetrization is compactly recurrent. This will be done in the planar setting in the next section. In the special case $\lambda=\frac12$, however, we already recover Mani-Levitska's result \cite{ml86} (modulo translations).

\begin{corollary} \label{corollary:convergence_to_ball_for_lambda=1/2}
    Let $K\in \Knn$. The random Steiner process starting at $K$ converges almost surely to any Euclidean ball $B\in\Knn$ with $\vol B=\vol K$.
\end{corollary}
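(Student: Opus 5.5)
We derive the corollary from Theorem~\ref{theorem:sufficient_condition_for_convergence_simple_version} with $\Sigma=S^{1/2}$ (Steiner symmetrization, identified with a Steiner operation as above), $G=\{\Id\}$ and $L^*=B$ the Euclidean unit ball. The theorem has three hypotheses, which we check in turn.

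First, $\overline{\Sigma^*K}$ is compact. Steiner symmetrization preserves volume, since each chord keeps its length. It does not increase diameter: the classical argument compares two chords and notes that the symmetral's chord endpoints realize an average of differences. By the translation-class preservation property we may work with translates. So every $L\in\Sigma^*K$ satisfies $\vol L=\vol K$ and $\diam L\le\diam K$, and these bounds persist on the closure because volume and diameter are continuous with respect to $\deltaId$. Lemma~\ref{lemma:characterization_of_compactness} then gives compactness.

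Second, $B$ is $\Sigma$-invariant modulo $\{\Id\}$. This is immediate, because $S_uB$ is a translate of $B$ for every $u$; indeed $S_uB=B$ when $B$ is centered at the origin.

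Third, $B$ is $\Sigma$-universal. This is the classical theorem that for every $K$ some sequence of Steiner symmetrizations converges in the Hausdorff metric to the ball $rB$ of equal volume (see e.g. \cite[Ch.~9]{gru07}). Hausdorff convergence implies $\deltaId$-convergence. This is the only substantive input, and I will cite it rather than reprove it.

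The theorem then yields that the random Steiner process converges modulo $\{\Id\}$ almost surely, with limit a homothet $rB$ (up to translation). Since volume is constant along the process and continuous, $\vol(rB)=\vol K$. Any Euclidean ball $B'$ with $\vol B'=\vol K$ is a translate of $rB$, hence at $\deltaId$-distance zero from it, so the process converges to any such ball, as claimed. The only step needing care is the diameter bound, and I expect no real obstacle.
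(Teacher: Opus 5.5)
Your proposal is correct and follows the paper's proof: it gets compactness of $\overline{\Sigma^*K}$ from volume preservation and the diameter bound via Lemma~\ref{lemma:characterization_of_compactness}, then applies Theorem~\ref{theorem:sufficient_condition_for_convergence_simple_version} using that the ball is $\Sigma$-universal and $\Sigma$-invariant modulo $\{\Id\}$. Your extra step, fixing the radius of the limiting ball by continuity of volume, spells out a detail the paper leaves implicit.
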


\begin{proof}
    Since Steiner symmetrization does not increase diameter and preserves volume, Lemma~\ref{lemma:characterization_of_compactness} implies that $\overline{\Sigma^*K}$ is compact. Now the Euclidean ball is $S^\frac12$-universal and $S^\frac12$-invariant modulo $\{\Id\}$, so the claim follows from Theorem~\ref{theorem:sufficient_condition_for_convergence_simple_version}.
\end{proof}

More generally, we have the following.

\begin{corollary} \label{corollary:convergence_to_ball_for_examples}
    Let $K \in \Knn$ and let $\Sigma$ be a Steiner operation that does not decrease volume. If the Euclidean ball is $\Sigma$-universal and $\Sigma$-invariant modulo $\{\Id\}$, then the random $\Sigma$-process converges almost surely to a Euclidean ball (whose radius might depend on the sample path).
\end{corollary}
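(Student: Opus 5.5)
The plan is to reduce to Theorem~\ref{theorem:sufficient_condition_for_convergence_simple_version} with $G=\{\Id\}$ and $L^*$ the Euclidean ball $B$. By hypothesis $B$ is $\Sigma$-universal and $\Sigma$-invariant modulo $\{\Id\}$. So the only remaining hypothesis to check is that $\overline{\Sigma^*K}$ is compact. Once that holds, the theorem says the random $\Sigma$-process converges modulo $\{\Id\}$ (that is, modulo translations) almost surely. It also says the limit is almost surely a homothet $rB$, and the radius $r$ may depend on the path. This is exactly the claim.

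For compactness I would use Lemma~\ref{lemma:characterization_of_compactness}: a closed subset of $(\Knn,\deltaId)$ is compact when diameters are bounded above and volumes are bounded below. Volumes along $\Sigma^*K$ are at least $\vol K>0$, since $\Sigma$ does not decrease volume. Passing to the closure preserves this lower bound, because volume is continuous modulo translations.

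For the diameter bound I would use the invariance of the ball together with monotonicity. Choose $R$ and a translate so that $K\subset RB$. By monotonicity and homogeneity, and since $\Sigma_U(RB)$ is a translate of $RB$, the body $\Sigma_UK$ lies in a translate of $RB$. Iterating this, every element of $\Sigma^*K$ has diameter at most $2R$. Equivalently, Lemma~\ref{lemma:steiner_operations_are_non-expansive} gives $\dId(\Sigma_{\mathbf U}K,RB)\le \dId(K,RB)$. The diameter bound also passes to the closure, since diameter is continuous modulo translations.

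With $C=\overline{\Sigma^*K}$, which is closed by definition, both conditions of Lemma~\ref{lemma:characterization_of_compactness} hold. Hence $\overline{\Sigma^*K}$ is compact, and Theorem~\ref{theorem:sufficient_condition_for_convergence_simple_version} applies.

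The step I expect to need the most care is the inclusion $\Sigma^*K\subset$ translates of $RB$. Monotonicity only yields inclusion up to a translation at each step, so the argument must be phrased as a statement about translation classes at every stage of the induction rather than about fixed sets. The rest is a direct citation of earlier results.
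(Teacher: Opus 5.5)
Your proposal is correct and matches the paper's proof step by step. Monotonicity together with the $\Sigma$-invariance of the ball bounds all diameters by $2R$, volume non-decrease bounds volumes from below, and Lemma~\ref{lemma:characterization_of_compactness} then gives compactness of $\overline{\Sigma^*K}$, after which Theorem~\ref{theorem:sufficient_condition_for_convergence_simple_version} applies with $G=\{\Id\}$ and $L^*=B$. Your added care in tracking translations through the induction and in passing the bounds to the closure makes explicit what the paper leaves implicit.
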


\begin{proof}
    Let $R>0$ such that $K \subset RB$, where $B$ is the Euclidean unit ball. By the monotonicity of $\Sigma$ and the fact that $B$ is $\Sigma$-invariant modulo $\{\Id\}$, we have $\Sigma_{\mathbf U}K\subset RB$ up to translation and hence $\diam \Sigma_{\mathbf U}K \leq 2R$ for any $\mathbf U \in \Orthogonal(n)^+$. Since $\Sigma$ does not decrease volume, Lemma~\ref{lemma:characterization_of_compactness} implies that $\overline{\Sigma^*K}$ is compact. Again, the claim follows from Theorem~\ref{theorem:sufficient_condition_for_convergence_simple_version}.
\end{proof}

Corollary~\ref{corollary:convergence_to_ball_for_examples} can be applied to the examples from Remark~\ref{remark:examples_of_steiner_operations}: For any $k \in \{1,\dots,n-1\}$, the Schwarz, Minkowski and fiber symmetrization with respect to $H \in \mathcal{G}(n,k)$ do not decrease volume and any Euclidean ball is invariant modulo $\{\Id\}$ under these operations. Moreover, the Euclidean ball is universal for Schwarz \cite[Thm.~6.9]{bgg22}, Minkowski \cite[Thm.~6.1]{bgg22} and fiber symmetrization \cite[Thm.~6.1, Prop.~6.3]{bgg22}. Therefore, Corollary~\ref{corollary:convergence_to_ball_for_examples} gives a unified proof for the facts that random generalized Steiner processes induced by Schwarz, Minkowski and fiber symmetrization for  $k \in \{1,\dots,n-1\}$ converge to Euclidean balls almost surely.

\section{Compact recurrence of planar \texorpdfstring{$\lambda$}{λ}-Steiner processes} \label{section:compact_recurrence_of_planar_lambda_steiner_processes}

Since Steiner symmetrization does not increase the diameter of a set while preserving its volume, the closure of the forward orbit $\overline{S^*(K)}$ is compact for any $K\in \Knn$, so Proposition~\ref{proposition:planar_lambda_steiner_symmetrization_is_compactly_recurrent} holds trivially for $\lambda=\frac12$. In the case of shaking, however, we encounter a very different scenario. Let $T_0\in \Ktt$ be a triangle and let $t_0>0$ be the maximal length of its sides. Shaking $T_0$ in the direction of a longest side, we obtain a right triangle $T_1$, whose maximal side length $t_1$ is given by $t_1=\sqrt{t_0^2+\frac{4(\vol T_0)^2}{t_0^2}}$. Iterating this process, we obtain a sequence of right triangles $(T_i)_{i\in\N}$, whose maximal side lengths satisfy the recurrence relation
\begin{equation}
    t_i = \sqrt{t_{i-1}^2+\frac{4(\vol T_0)^2}{t_{i-1}^2}}=t_{i-1} \cdot \sqrt{1+\frac{4(\vol T_0)^2}{t_{i-1}^4}} \quad \text{for } i \in \N.
\end{equation}
If $(t_i)_{i\in \N}$ was bounded, we could find a constant $c>1$ with $\sqrt{1+\frac{4(\vol T_0)^2}{t_{i-1}^4}}>c$ for all $i\in \N$, which leads to a contradiction. It follows that the closure of the forward orbit $\overline{B^*T_0}$ is not compact, and by the monotonicity of shaking, we even get that $\overline{B^*K}$ is not compact for any $K\in \Ktt$. Considering iterated prisms over planar convex bodies, this claim can be extended to all dimensions $n \geq 2$. It follows that Theorem~\ref{theorem:sufficient_condition_for_convergence_simple_version} and Theorem~\ref{theorem:sufficient_condition_for_divergence_simple_version} cannot be applied to Question~\ref{question:precise_question} for general values of $\lambda \in [0,\frac12]$; this motivates the assumption of compact recurrence in the criteria from Section~\ref{section:random_convergence_of_generalized_steiner_processes}. In this section, we verify that Theorem~\ref{theorem:characterization_tethered} and Theorem~\ref{theorem:sufficient_condition_errant} can be used to resolve Question~\ref{question:precise_question} in the planar case. In other words, our goal is to prove the following result.

\begin{proposition} \label{proposition:planar_lambda_steiner_symmetrization_is_compactly_recurrent}
    For every $\lambda \in [0,\frac12]$, $\lambda$-Steiner symmetrization is compactly recurrent in $\R^2$.
\end{proposition}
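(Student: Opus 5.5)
The plan is a Foster--Lyapunov drift argument for the functional $V(L)\coloneqq\log\diam L$. Two facts make this work. First, $\lambda$-Steiner symmetrization preserves area, since every chord in direction $u$ keeps its length and its base point in $\pi_{u^\bot}(K)$. Second, by Lemma~\ref{lemma:characterization_of_compactness}, for $A\coloneqq\vol K$ and any $R>0$ the set
\begin{equation}
C_R\coloneqq\{L\in\Ktt\mid \vol L=A,\ \diam L\leq R\}
\end{equation}
is compact. So it suffices to show that, for a suitable $R_0$, the random process returns to $C_{R_0}$ infinitely often almost surely. I will take $C(\Sigma,K)\coloneqq C_{R_0}$. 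Note also that $V$ is bounded below on bodies of area $A$, because $\diam L^2\geq 4A/\pi$ by the isodiametric inequality.

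\textbf{Step 1: one-step geometric estimate.} Let $L$ be planar with area $A$, diameter $D$, and minimal width $w$ attained in direction $v^\bot$, so that $L$ lies in a strip of width $w$ parallel to $v$. Since $L$ contains a triangle of base $D$ and height about $w/2$, we have $w\leq cA/D$. Let $\theta$ be the angle between $u$ and $v$.
\begin{itemize}
\item The projection $\pi_{u^\bot}(L)$ has length at most $D|\sin\theta|+w$.
\item Every chord of $L$ in direction $u$ has length at most $\min\{D,\,w/|\sin\theta|\}$.
\item By \eqref{eq_def_lambda_steiner}, $S^\lambda_uL$ lies over the segment $\pi_{u^\bot}(L)$, between the graphs of $-\lambda\ell$ and $(1-\lambda)\ell$, where $\ell$ is the chord-length function. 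Hence
\begin{equation}
\diam S^\lambda_uL\leq\sqrt{(D|\sin\theta|+w)^2+\min\{D,w/|\sin\theta|\}^2}.
\end{equation}
\end{itemize}
This yields two consequences. There is a uniform upper bound on the increment: $V(S^\lambda_uL)-V(L)\leq \log(\sqrt5)$, say, valid for all $u$ once $w\leq D$. For $|\sin\theta|\geq D^{-1/2}$ and $D$ large, the bound gives $V(S^\lambda_uL)-V(L)\leq\log|\sin\theta|+o(1)$.

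\textbf{Step 2: negative drift.} The angle $\theta$ is uniformly distributed for a uniformly random $u$, and $\int\log|\sin\theta|\dint\theta/\pi=-\log2$. The exceptional set $|\sin\theta|<D^{-1/2}$ has probability $O(D^{-1/2})$ and contributes at most $O(D^{-1/2})\log\sqrt5$. Combining this with Step 1, I will obtain $R_0$ and $c>0$ such that
\begin{equation}
\E[V(K_{t+1})-V(K_t)\mid K_t]\leq -c\quad\text{whenever } \diam K_t> R_0.
\end{equation}
The constant $c$ is uniform in $\lambda$ and in the shape of $K_t$, because only $A$ and $D$ enter the estimate.

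\textbf{Step 3: recurrence.} Let $\tau$ be the first hitting time of $C_{R_0}$. The process $V(K_{t\wedge\tau})+c\,(t\wedge\tau)$ is a supermartingale, and $V$ is bounded below by $\tfrac12\log(4A/\pi)$. Optional stopping then gives
\begin{equation}
\E[\tau]\leq \frac{V(K_0)-\tfrac12\log(4A/\pi)}{c}<\infty ,
\end{equation}
and the same bound holds from any starting body, which is finite because every body has finite diameter. The strong Markov property then shows that the process, after leaving $C_{R_0}$, returns to it almost surely. Iterating this gives infinitely many visits almost surely. The set $C_{R_0}$ depends only on $A$ and $\lambda$, not on the sample path, as Definition~\ref{definition:compact_recurrence} requires.

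The main obstacle is Step 1. The diameter can genuinely grow, as the triangle example preceding the statement shows, so the estimate must be uniform over all thin bodies and all $\lambda\in[0,\frac12]$. The error terms $w$ and $w/|\sin\theta|$ must be controlled carefully near $\theta=0$; for this I would rely on the crude bound $\diam S^\lambda_uL\leq\sqrt5\,D$ in the small-angle regime.
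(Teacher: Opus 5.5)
Your proposal is correct, and it reaches the conclusion by a different mechanism than the paper.

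The paper never shows that the diameter tends to decrease. It combines two ingredients:
\begin{enumerate}
\item a worst-case one-step bound $\diam(S^\lambda_uK)^2\le(\diam K)^2+8(\vol K/\diam K)^2$, which together with the isodiametric inequality gives at most linear growth, $\diam K_i\le\diam K+i\sqrt{2\pi\vol K}$;
\item a ``lucky jump'' estimate: with probability at least $\frac2\pi\sqrt{\vol K}/\diam K$, a single step lands directly in $\{\diam\le5\sqrt{\vol K}\}$. These are the directions for which the width $w(K,u^\bot)$ lies in $[1,4]$ after normalizing the area.
\end{enumerate}
Since $\sum_j 1/(a+bj)=\infty$, the product of the conditional failure probabilities vanishes. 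This is a second-Borel--Cantelli-type argument.

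Your Foster--Lyapunov route uses a sharper, angle-dependent estimate instead: a thin body is contracted by roughly the factor $|\sin\theta|$. Averaging $\log|\sin\theta|$ then gives a uniform negative drift of $\log\diam$ outside $C_{R_0}$. This buys a quantitative conclusion. You get a finite expected return time, of order $O(1+\log(\diam K/\sqrt{\vol K}))$. The paper's product bound only gives tails of the return time decaying like $n^{-c_1}$ with $c_1=\frac{2}{\pi\sqrt{2\pi}}<1$, so it does not even guarantee a finite mean. The price is the finer geometric estimate and the supermartingale/optional-stopping machinery, whereas the paper gets by with cruder worst-case bounds.

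One small bookkeeping point in Step 2. Discarding the exceptional set $\{|\sin\theta|<D^{-1/2}\}$ also removes the part of the integral of $\log|\sin\theta|$ over that set. This costs an additional $O(D^{-1/2}\log D)$, not just $O(D^{-1/2})$. It still vanishes as $D\to\infty$, so your drift bound is unaffected.
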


Considering the sequence of right triangles $(T_i)_{i\in\N}$, we observed that the diameter of a $0$-Steiner process can become arbitrarily large. However, as the following lemma shows, the growth rate of the diameter will decrease as the isodiametric quotient $\frac{\vol K}{(\diam K)^2}$ decreases.
\begin{lemma} \label{lemma:bound_on_growth_of_diameter}
    Let $K \in \Ktt$, $\lambda \in [0,\frac12]$ and $u \in\sphereone$. Then
    \begin{equation}
        \diam(\Sulambda K) \leq \sqrt{(\diam K)^2+8\left(\frac{\vol K}{\diam K}\right)^2}.
    \end{equation}
\end{lemma}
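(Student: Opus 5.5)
Set $D\coloneqq\diam K$ and $V\coloneqq\vol K$. We work in coordinates with $u=e_2$, so $u^\bot$ is the $x$-axis and $\pi\coloneqq\pi_{u^\bot}(K)=[a,b]$. We will check that $b-a\le D$ and record the chord-length function $\ell(x)=-(f_{K,u}+g_{K,u})(x)$, which is concave on $[a,b]$. By \eqref{eq_def_lambda_steiner}, the body $\Sulambda K$ consists of the points $(x,y)$ with $x\in[a,b]$ and $-\lambda\ell(x)\le y\le(1-\lambda)\ell(x)$; we have to fix the sign conventions so that this reads as a segment of length $\ell(x)$ in the $u$-direction. The diameter of $\Sulambda K$ is attained between two extreme points $p=(x_1,y_1)$ and $q=(x_2,y_2)$, each lying on the upper or lower boundary graph. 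Since $0\le \lambda\le\frac12$, the vertical spread satisfies
\begin{equation}
  |y_1-y_2|\le \max\{(1-\lambda)\ell(x_1)+\lambda\ell(x_2),\,|(1-\lambda)(\ell(x_1)-\ell(x_2))|,\dots\}\le \max_x \ell(x)=:h .
\end{equation}
The cases are: both points on the upper graph, giving a difference $(1-\lambda)|\ell(x_1)-\ell(x_2)|\le h$; both on the lower graph, giving a difference $\le \lambda h$; or one on each, giving $(1-\lambda)\ell(x_1)+\lambda\ell(x_2)\le h$. It follows that
\begin{equation}
  \diam(\Sulambda K)^2\le (b-a)^2+h^2 .
\end{equation}

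The key step is to bound $h$ in terms of $V/D$ while keeping track of $b-a$. Since $\ell$ is concave and nonnegative, $V=\int_a^b\ell\ge \frac12 (b-a)h$, that is, $h\le 2V/(b-a)$. This yields
\begin{equation}
  (b-a)^2+h^2\le w^2+\frac{4V^2}{w^2}\qquad\text{with } w\coloneqq b-a .
\end{equation}
If $w$ is close to $D$, this is at most $D^2+4V^2/D^2$, which is better than the claim. The difficulty arises when $w$ is small, because then $4V^2/w^2$ blows up. In that regime we instead use $h\le D$, since a vertical chord of $K$ has length at most $D$. The cleaner route, however, is to use a lower bound on $w$ directly. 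The quantity $w$ is the width of $K$ in the direction $u^\bot$, and it satisfies $w\ge V/D$, because $K$ is contained in a $w\times D$ rectangle.

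The main obstacle is therefore the case of small $w$. We handle it by combining $h\le D$ with $w\le D$ and $V\le wD$. The function $\phi(w)=w^2+\min\{D^2,4V^2/w^2\}$ has to be maximised over $w\in[V/D,D]$.
\begin{enumerate}
  \item On the range where $4V^2/w^2\ge D^2$, that is, $w\le 2V/D$, we have $\phi(w)\le w^2+D^2\le D^2+4V^2/D^2$.
  \item On the range $w\ge 2V/D$, the function $w^2+4V^2/w^2$ is convex in $w^2$, so its maximum is attained at an endpoint. At $w=D$ it equals $D^2+4V^2/D^2$. At $w=2V/D$ it equals $4V^2/D^2+D^2$.
\end{enumerate}
Hence $\diam(\Sulambda K)^2\le D^2+4V^2/D^2\le D^2+8V^2/D^2$, which gives the claim; the factor $8$ leaves room for sloppier estimates, for instance if the case analysis of $|y_1-y_2|$ above needs slack. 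Finally, we should double-check that the maximal vertical extent is indeed bounded by $h$ in every configuration of the two extreme points, since this is the step where the asymmetry for $\lambda\ne\frac12$ enters.
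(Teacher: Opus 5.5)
Your argument is correct and is essentially the paper's. Both bound $\diam(\Sulambda K)^2$ by (horizontal separation)$^2$ plus (vertical separation)$^2$, where the vertical part is at most a chord length of $K$. Both bound the product of the two separations by $2\vol K$ through a triangle inside $K$ (in your version, the concavity of $\ell$), and then optimize under the constraint that each separation is at most $\diam K$.

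The differences are minor. You work with the global quantities $w$ (length of the projection) and $h=\max\ell$, rather than the two fibres through a diameter-realizing pair. Your final optimization caps $h$ by $\diam K$ instead of reducing to the case $\diam\Sulambda K\ge\diam K$, and this yields the sharper constant $4$ in place of $8$. The constant $4$ is attained by shaking a triangle in the direction of its longest side, as in the right-triangle example at the start of Section~\ref{section:compact_recurrence_of_planar_lambda_steiner_processes}.
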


\begin{proof}
    Let $x,y \in \Sulambda K$ with $\norm{x-y}=\diam \Sulambda K$. Let
    \begin{equation}
        L_1 \coloneqq K \cap (x+\linh \{u\}), \quad L_2 \coloneqq K \cap (y+\linh \{u\})
    \end{equation}
    be the two chords of $K$ corresponding to the points $x,y$ and assume without loss of generality that $\alpha\coloneqq\vol_1(L_1) \geq \vol_1(L_2)$. Setting $\beta \coloneqq \norm{\pi_{u^\bot}(x-y)}$, we have $\frac{\alpha \beta}{2}\leq \vol K $, and hence
    \begin{equation}
        \diam \Sulambda K = \norm{x-y} \leq \sqrt{\alpha^2+\beta^2}  \leq \sqrt{\max\{\alpha^2,\beta^2\}+\frac{4(\vol K)^2}{\max\{\alpha^2,\beta^2\}}}.
    \end{equation}
    We restrict our attention to the non-trivial case where $\diam \Sulambda K \geq \diam K$. There, we have
    \begin{equation}
        (\diam K)^2 \leq \left(\diam \Sulambda K\right)^2 \leq \alpha^2+\beta^2 \leq 2 \max\{\alpha^2,\beta^2\},
    \end{equation}
    which implies
    \begin{equation}
        \diam \Sulambda K \leq \sqrt{\max\{\alpha^2,\beta^2\}+\frac{4(\vol K)^2}{\max\{\alpha^2,\beta^2\}}} \leq \sqrt{(\diam K)^2+\frac{8(\vol K)^2}{(\diam K)^2}}. \qedhere
    \end{equation}
\end{proof}

Lemma~\ref{lemma:bound_on_growth_of_diameter} yields a deterministic upper bound on the diameter of any $\lambda$-Steiner process.

\begin{lemma} \label{lemma:sum_of_inverse_diameters_diverges}
    Let $K \in \Ktt$, $\lambda \in [0,\frac12]$ and let $u_1,u_2,\ldots \in\sphereone$ be an arbitrary sequence of unit vectors. 
    Then
    \begin{equation} \label{equation:bound_on_diameters}
        \diam S^\lambda_{u_i}\cdots S^\lambda_{u_1}K \leq \diam K + i \sqrt{2\pi\vol K} \quad \text{for all } i\in \N.
    \end{equation}
\end{lemma}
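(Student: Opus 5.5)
The plan is a one-step increment bound, iterated by induction on $i$. Write $K_i \coloneqq S^\lambda_{u_i}\cdots S^\lambda_{u_1}K$, $d_i\coloneqq \diam K_i$, and $V\coloneqq \vol K$.

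First, $\lambda$-Steiner symmetrization preserves area. By Definition~\ref{definition:lambda_steiner_symmetrization}, every chord of $K$ in direction $u$ is replaced by a chord of the same length $f_{K,u}+g_{K,u}$ over the same base point of $\pi_{u^\bot}(K)$. Cavalieri's principle (Fubini) then gives $\vol S^\lambda_u K=\vol K$, so $\vol K_i = V$ for all $i$.

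Next, I bound the increment $d_i-d_{i-1}$ by a constant. Lemma~\ref{lemma:bound_on_growth_of_diameter} applied to $K_{i-1}$ gives
\begin{equation}
    d_i\leq \sqrt{d_{i-1}^2+8V^2/d_{i-1}^2}.
\end{equation}
Using the elementary inequality $\sqrt{a+b}\leq\sqrt a+\sqrt b$ for $a,b\ge 0$, this yields
\begin{equation}
    d_i\leq d_{i-1}+\frac{2\sqrt2\, V}{d_{i-1}}.
\end{equation}
To make the increment independent of $d_{i-1}$, I bound $d_{i-1}$ from below using the planar isodiametric inequality. A planar convex body of diameter $d$ has area at most $\pi d^2/4$, so $d_{i-1}\geq \sqrt{4V/\pi}$. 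Substituting,
\begin{equation}
    \frac{2\sqrt2\, V}{d_{i-1}}\leq 2\sqrt2\, V\sqrt{\frac{\pi}{4V}}=\sqrt{2\pi V}.
\end{equation}

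Hence $d_i\le d_{i-1}+\sqrt{2\pi V}$ for every $i\in\N$, and induction starting from $d_0=\diam K$ gives \eqref{equation:bound_on_diameters}.

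The only point that needs care is the choice of how to linearize the square root. The tangent-line bound $\sqrt{a^2+b}\le a+b/(2a)$ produces the increment $4V^2/d_{i-1}^3$. Combined with the isodiametric bound, that gives roughly $2.78\sqrt V$, which exceeds the claimed constant $\sqrt{2\pi}\approx 2.51$. Using the subadditivity of the square root, as above, produces exactly $\sqrt{2\pi V}$. The remaining ingredients, area preservation and the isodiametric inequality, are standard.
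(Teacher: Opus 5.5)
Your proposal is correct and follows the paper's proof essentially step for step. Like the paper, you apply Lemma~\ref{lemma:bound_on_growth_of_diameter}, split the square root by subadditivity, bound the diameter from below via the planar isodiametric inequality together with area preservation to get the increment $\sqrt{2\pi\vol K}$, and then iterate.
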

\begin{proof}
    We write $K_0\coloneqq K$ and $K_i\coloneqq S^\lambda_{u_i}\cdots S^\lambda_{u_1}K$ for $i \in \N$. By the isodiametric inequality, we have $(\diam K_i)^2\geq \frac4{\pi} \vol K_i=\frac4{\pi} \vol K$ for all $i \in \N_0$. Using Lemma~\ref{lemma:bound_on_growth_of_diameter}, we get
    \begin{equation}
        \diam K_{i+1} \leq \sqrt{(\diam K_i)^2} + \sqrt{\frac{8(\vol K_i)^2}{(\diam K_i)^2}}  \leq \diam K_i +\sqrt{2\pi\vol K} \quad \text{for all } i \in \N_0,
    \end{equation}
    which implies the claim. %
\end{proof}

We have thus established that the isodiametric coefficient of a planar random $\lambda$-Steiner process cannot shrink too quickly. On the other hand, as the following lemma shows, if the isodiametric coefficient is not too small, then there is a reasonable probability that it will be bounded from below by a constant after applying a random $\lambda$-Steiner symmetrization.

\begin{lemma} \label{lemma:lower_bound_on_diameter_probability}
    Let $\lambda \in [0,\frac12]$, $K \in \Ktt$ and let $u$ be uniformly distributed on $\sphereone$. Then
    \begin{equation}
        \sigma\left(\diam\Sulambda K\leq 5\sqrt{\vol K}\right) \geq \frac2\pi \cdot \frac{\sqrt{\vol K}}{\diam K},
    \end{equation}
    where $\sigma$ denotes the rotation-invariant probability measure on $\sphereone$.
\end{lemma}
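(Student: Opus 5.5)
The plan is to choose the random direction $u$ at a suitable small, but not too small, angle to a diameter of $K$. For such $u$, both the projection $\pi_{u^\bot}(K)$ and every chord of $K$ in direction $u$ are of order $\sqrt{\vol K}$. The bound then follows from a structural fact about $\Sulambda K$ that holds for every $\lambda$.

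Write $D=\diam K$, $V=\vol K$ and $s=\sqrt V/D$. The structural fact is this: $\Sulambda K$ has the same projection onto $u^\bot$ as $K$, and its chords in direction $u$ have the same lengths as those of $K$. By \eqref{eq_def_lambda_steiner}, each such chord of length $\ell$ occupies the $u$-coordinates between $-(1-\lambda)\ell$ and $\lambda\ell$, so it contains $0$. Hence all $u$-coordinates of points of $\Sulambda K$ lie in an interval of length at most $c(u)$, the maximal chord length of $K$ in direction $u$. This gives
\begin{equation}
    \diam \Sulambda K\leq \sqrt{p(u)^2+c(u)^2}, \qquad p(u)\coloneqq\vol_1(\pi_{u^\bot}K).
\end{equation}

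First I dispose of the case of small $D$. If $D\leq 4.3\sqrt V$, then Lemma~\ref{lemma:bound_on_growth_of_diameter} together with $D^2\geq \frac4\pi V$ (isodiametric inequality) gives
\begin{equation}
    (\diam\Sulambda K)^2\leq D^2+2\pi V\leq 25V
\end{equation}
for every $u$, so the probability is $1$. Since $s\leq \sqrt\pi/2<\frac\pi2$, this exceeds $\frac2\pi s$. So assume $D>4.3\sqrt V$, which gives $s<0.24$ and $3s<1$.

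Fix a diameter segment $[a,b]$ of $K$ with direction $e$, and let $\theta$ be the angle between $u$ and $e$. I establish two elementary area bounds.
\begin{itemize}
    \item[(a)] If $[x,y]$ is a longest chord in direction $u$, then $\conv\{a,b,x,y\}\subset K$ has area at least $\frac12 c(u)D|\sin\theta|$. Hence $c(u)\leq 2V/(D|\sin\theta|)$.
    \item[(b)] The width $w_e$ of $K$ perpendicular to $e$ satisfies $\frac12 w_eD\leq V$, via the convex hull of $[a,b]$ with the extreme points on either side. Hence $p(u)\leq D|\sin\theta|+w_e\leq D|\sin\theta|+2V/D$.
\end{itemize}
Now restrict to directions with $|\sin\theta|\in[2s,3s]$. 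Bound (a) gives $c(u)\leq\sqrt V$. Bound (b) gives $p(u)\leq 3\sqrt V+2V/D\leq (3+\sqrt\pi)\sqrt V$, using $D\geq 2\sqrt{V/\pi}$. Therefore
\begin{equation}
    \diam\Sulambda K\leq\sqrt{1+(3+\sqrt\pi)^2}\,\sqrt V<5\sqrt V.
\end{equation}

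It remains to bound the measure of these directions. Since $3s<1$, the set $\{\theta\in[0,2\pi) : |\sin\theta|\in[2s,3s]\}$ consists of four arcs. On each arc $|\cos\theta|\le 1$, so $\arcsin$ has derivative at least $1$, and each arc has length at least $s$. The normalized measure is therefore at least $4s/(2\pi)=\frac2\pi\cdot\frac{\sqrt V}{D}$, as claimed.

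I expect the main obstacle to be bookkeeping rather than ideas. One point is checking the structural claim that $\Sulambda K$ fits in a strip of width $c(u)$, which I would do via the chord description in \eqref{eq_def_lambda_steiner}. The other is tuning the window $[2s,3s]$ against the case split so that the numerical constant stays below $5$ while $3s\leq 1$ holds.
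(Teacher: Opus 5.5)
Your proof is correct and follows the paper's argument: you split off small diameters via Lemma~\ref{lemma:bound_on_growth_of_diameter}, restrict to directions whose angle $\theta$ to a diameter has $\sin\theta$ in a window of length $\sqrt{\vol K}/\diam K$, bound the projection onto $u^\bot$ by the rectangle around the diameter and the chord lengths by an area argument, and conclude since $\Sulambda K$ lies in a strip whose width is the maximal chord length. The only differences are cosmetic. You use the window $[2s,3s]$ and threshold $4.3\sqrt{\vol K}$ instead of the paper's $[s,2s]$ and $2\sqrt{\vol K}$, and you bound chords directly against the diameter rather than through $w(K,u^\bot)\geq \diam K\sin\theta$. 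Also, a chord of length $\ell$ in $\Sulambda K$ occupies $[-\lambda\ell,(1-\lambda)\ell]$ rather than $[-(1-\lambda)\ell,\lambda\ell]$, which is harmless since it still contains $0$.
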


\begin{proof}
    By the homogeneity of $S^\lambda$, we can assume without loss of generality that $\vol K=1$. If $D\coloneqq \diam K<2$, then Lemma~\ref{lemma:bound_on_growth_of_diameter} implies $\diam(\Sulambda K)^2 \leq D^2+\frac8{D^2}\leq 4+2\pi<5^2$, so the claim holds for all $u \in \sphereone$. It remains to consider the case $D\geq 2$. Denoting the width of $K$ in the direction of the line $u^\bot$ by $w(K,u^\bot)$, we first show that in this case
    \begin{equation} \label{eq:lemma_lower_bound_on_diameter_probability}
        \Prob(1 \leq w(K,u^\bot) \leq 4) \geq \frac2\pi \cdot \frac1{\diam K}.
    \end{equation}
    For this, let $x,y \in K$ with $D =\norm{x-y} \geq2$. Defining $\theta_u \in [0,\frac\pi2]$ as the angle between the lines $\aff [x,y]$ and $\linh \{u\}$, the random variable $\theta_u$ is uniformly distributed on $[0,\frac\pi2]$. Clearly, we have
    \begin{equation}
        D \cdot \sin (\theta_u) \leq w(K,u^\bot).
    \end{equation}
    With regard to an upper bound on $w(K,u^\bot)$, let $v \in \sphereone$ be orthogonal to $x-y$ and observe that $K$ is contained in the rectangle $Q = [x,y]+[-\frac2D v,\frac2D v]$, due to the assumption that $\vol K=1$. It follows that
    \begin{equation}
        w(K,u^\bot) \leq w(Q,u^\bot)\leq D \cdot \sin(\theta_u) + \frac4D.
    \end{equation}
    Therefore, if $\sin(\theta_u) \in [\frac1D,\frac2D]$, it follows that
    \begin{equation}
        1 \leq w(K,u^\bot) \leq 2+ \frac4D \leq 4,
    \end{equation}
    where the last step uses the assumption $D\geq 2$. 
    Again using $D \geq 2$, we know that there exists a $\theta_u \in [0,\frac\pi2]$ with $\sin(\theta_u) \in [\frac1D,\frac2D]$, and because the sine function is 1-Lipschitz, it follows that the event $\sin(\theta_u) \in [\frac1D,\frac2D]$ occurs with probability at least $\frac{\frac1D}{\frac\pi2}=\frac{2}{\pi D}$, which shows \eqref{eq:lemma_lower_bound_on_diameter_probability}.

    To conclude the proof, we show that for any $u \in \sphereone$, the estimate $1 \leq w(K,u^\bot) \leq 4$ implies that $\diam\Sulambda K\leq 5$, provided that $\vol K =1$. By the latter assumption, we know that any chord $K \cap (x+\linh \{u\})$, $x \in K$, has length at most $\frac2{w(K,u^\bot)}$. Recalling the definition of $\Sulambda K$, we have $\scpr{x-y,u} \leq \frac2{w(K,u^\bot)}$ for all $x,y \in\Sulambda K$ and hence, under the assumption that $1 \leq w(K,u^\bot) \leq 4$,
    \begin{equation}
        \diam \Sulambda K \leq \sqrt{w(K,u^\bot)^2+\frac4{w(K,u^\bot)^2}}\leq \sqrt{16+4} < 5. \qedhere
    \end{equation}
\end{proof}

Combining the previous lemmas with a standard argument in the spirit of the second Borel--Cantelli lemma, we obtain the following statement.

\begin{lemma} \label{lemma:probability_that_infinitely_many_events_occur_is_1}
    Let $K \in \Ktt$, $\lambda \in [0,\frac12]$ and let $u_1,u_2,\dots$ be a sequence of independent random vectors that are uniformly distributed on $\mathbb{S}^{1}$. For the random $\lambda$-Steiner process starting at K, given by
    \begin{equation}
        K_i \coloneqq (S_{u_i}^\lambda \circ S_{u_{i-1}}^\lambda \circ \dots \circ S_{u_{1}}^\lambda) (K), \quad i \in \N,
    \end{equation}
    the probability that infinitely many of the events $\{\diam K_i \leq 5 \sqrt{\vol K}\}$ occur is 1.
\end{lemma}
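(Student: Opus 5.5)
We combine Lemma~\ref{lemma:sum_of_inverse_diameters_diverges} and Lemma~\ref{lemma:lower_bound_on_diameter_probability} in a conditional (L\'evy-type) version of the second Borel--Cantelli lemma. Since $\lambda$-Steiner symmetrization preserves volume, $\vol K_i=\vol K$ for all $i$. By homogeneity we normalize $\vol K=1$, and we write $D\coloneqq\diam K$, $c\coloneqq\sqrt{2\pi}$. Let $E_i\coloneqq\{\diam K_i\leq 5\}$ and $\mathcal F_i\coloneqq\sigma(u_1,\dots,u_i)$. Because $u_{i+1}$ is independent of $\mathcal F_i$ and $K_i$ is $\mathcal F_i$-measurable, Lemma~\ref{lemma:lower_bound_on_diameter_probability} applied to the body $K_i$ gives almost surely
\begin{equation}
    \Prob[E_{i+1}\mid\mathcal F_i]\geq \frac2\pi\cdot\frac{1}{\diam K_i}\geq \frac{2}{\pi}\cdot\frac{1}{D+ic},
\end{equation}
where the second inequality is the deterministic bound \eqref{equation:bound_on_diameters}.

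The right-hand side is a deterministic sequence with divergent sum, since it behaves like a harmonic series. We therefore do not need the full conditional Borel--Cantelli lemma; a direct product estimate suffices. Fix $m\in\N$ and $N>m$. Conditioning successively on $\mathcal F_{N-1},\mathcal F_{N-2},\dots$, and using that $\bigcap_{j=m+1}^{N-1}E_j^\complement\in\mathcal F_{N-1}$, we obtain
\begin{equation}
    \Prob\left[\textstyle\bigcap_{j=m+1}^{N}E_j^\complement\right]\leq\prod_{j=m+1}^{N}\left(1-\frac{2}{\pi(D+(j-1)c)}\right)\leq\exp\left(-\sum_{j=m+1}^N\frac{2}{\pi(D+(j-1)c)}\right),
\end{equation}
which tends to $0$ as $N\to\infty$. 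Hence $\Prob[\bigcap_{j>m}E_j^\complement]=0$ for every $m$. Taking the countable union over $m$, almost surely infinitely many $E_i$ occur.

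The only delicate point is making the conditioning rigorous. We need the fact that $\Prob[E_{i+1}\mid\mathcal F_i]=\sigma(\{u\colon \diam S^\lambda_u K_i\leq 5\})$, evaluated at the random body $K_i$. This follows from independence together with measurability of $(u,L)\mapsto S^\lambda_uL$, and the latter is a consequence of the joint continuity established via Lemma~\ref{lemma:steiner_operations_continuity} and non-expansiveness. Everything else is routine.
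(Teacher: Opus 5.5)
Your proposal is correct and follows essentially the same route as the paper: you bound each conditional factor from below via Lemma~\ref{lemma:lower_bound_on_diameter_probability} combined with the deterministic diameter growth bound of Lemma~\ref{lemma:sum_of_inverse_diameters_diverges}, and then observe that the resulting product of factors $1-\frac{2}{\pi(D+(j-1)c)}$ vanishes. Conditioning on $\mathcal F_{j-1}$ and using the tower property, instead of conditioning on the event $\bigcap_{i=N}^{j-1}A_i$ as the paper does, is only a cosmetic difference, and it spares you the paper's case distinction for when some of these events have probability zero.
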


\begin{proof}%
    Writing $A_i \coloneqq \{\diam K_i >5\sqrt{\vol K}\}$ for $i\in \N$, it suffices to show that
    \begin{equation}
        \Prob\left[\bigcap_{i=N}^\infty A_i\right] = 0 \quad \text{for all }N\in\N.
    \end{equation}
    Let $N \in \N$. If there exists an index $M \geq N$ with $\Prob\left[\bigcap_{i=N}^M A_i\right] = 0$, there is nothing to show. Otherwise, we have
    \begin{align} \label{eq:lemma_probability_that_infinitely_many_events_occur_is_1}
        \Prob\left[\bigcap_{i=N}^\infty A_i\right]= \prod_{j=N}^\infty \frac{\Prob\left[\bigcap_{i=N}^jA_i\right]}{\Prob\left[\bigcap_{i=N}^{j-1}A_i\right]} = \prod_{j=N}^\infty \Prob \left[A_j \MID \bigcap_{i=N}^{j-1}A_i\right].
    \end{align}
    We consider a single factor of the infinite product. Writing $\nu$ for the conditional distribution of $K_{j-1}$, given that the event $\bigcap_{i=N}^{j-1}A_i$ occurred, the Markov property implies that
    \begin{equation}
        \Prob \left[A_j \MID \bigcap_{i=N}^{j-1}A_i\right]= \int\int_{\sphereone}\One_{\{\diam S_u^\lambda(K_{j-1}) > 5\sqrt{\vol(K)}\}} \dint \sigma(u) \dint\nu(K_{j-1}).
    \end{equation}
    By Lemma~\ref{lemma:lower_bound_on_diameter_probability} and Lemma~\ref{lemma:sum_of_inverse_diameters_diverges}, we have
    \begin{equation}
        \Prob \left[A_j \MID \bigcap_{i=N}^{j-1}A_i\right]\leq \int\left(1-\frac2\pi \cdot \frac{\sqrt{\vol K_{j-1}}}{\diam K_{j-1}}   \right)\dint\nu(K_{j-1})\leq 1-\frac2\pi \frac{\sqrt{\vol K}}{\diam K + (j-1) \sqrt{2\pi\vol K}}.
    \end{equation}
    Plugging this back into \eqref{eq:lemma_probability_that_infinitely_many_events_occur_is_1}, we get
    \begin{align}
        \Prob\left[\bigcap_{i=N}^\infty A_i\right]&\leq \prod_{j=N}^\infty \left(1-\frac2\pi  \frac{\sqrt{\vol K}}{\diam K + (j-1) \sqrt{2\pi\vol K}}\right).
    \end{align}
    Since the infinite product on the right-hand side is zero, the claim follows.
\end{proof}

It is now easy to prove the main result of this section.

\begin{proof}[Proof of Proposition~\ref{proposition:planar_lambda_steiner_symmetrization_is_compactly_recurrent}]
    Let $\lambda \in [0,\frac12]$ and $K \in \Ktt$. We set
    \begin{equation}
        C(S^\lambda,K)\coloneqq \left\{L \in \Ktt \mid \vol L=\vol K \text { and } \diam L \leq 5 \sqrt{\vol K}\right\}.
    \end{equation}
    Since $S^\lambda$ preserves volume, Lemma~\ref{lemma:probability_that_infinitely_many_events_occur_is_1} shows that the random $\lambda$-Steiner process starting at $K$ almost surely visits $C(S^\lambda,K)$ infinitely often. The compactness of $C(S^\lambda,K)$ follows from Lemma~\ref{lemma:characterization_of_compactness}.
\end{proof}

With Proposition~\ref{proposition:planar_lambda_steiner_symmetrization_is_compactly_recurrent} at hand, it is now easy to prove the result about random shaking in the plane that was already mentioned in Section~\ref{section:introduction}.

\begin{corollary} \label{corollary:random_shaking_process_converges_to_any_triangle}
    For $K\in \Ktt$, the random shaking process starting at $K$ converges almost surely modulo $\SL(2,\R)$ to any triangle $T\in \Ktt$ with $\vol T= \vol K$.
\end{corollary}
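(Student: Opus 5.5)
We derive the corollary from Theorem~\ref{theorem:characterization_tethered} with $G=\SL(2,\R)$ and $\Sigma=S^0$, mirroring the proof of Theorem~\ref{theorem:sufficient_condition_for_convergence_simple_version}. The only difference is that compactness of $\overline{\Sigma^*K}$ is replaced by compact recurrence, which Proposition~\ref{proposition:planar_lambda_steiner_symmetrization_is_compactly_recurrent} supplies for every $K\in\Ktt$. The plan has three steps:
\begin{enumerate}
    \item show that every triangle is $S^0$-invariant modulo $\SL(2,\R)$;
    \item show that every planar convex body is $S^0$-tethered modulo $\SL(2,\R)$;
    \item identify the almost sure limit with the class of triangles of area $\vol K$.
\end{enumerate}

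\emph{Step 1: invariance.} Fix a triangle $T$ and a direction $u$. Shaking $T$ in direction $u$ produces a triangle again. Indeed, $f_{T,u}+g_{T,u}$ (the negative of the chord length) is affine on each of the two subintervals of $\pi_{u^\perp}(T)$ cut out by the projection of the middle vertex. The chord length vanishes at both endpoints, so it is a tent function. Hence $B_uT$ is the convex hull of a base segment on $u^\perp$ and one apex, which is a triangle. Shaking also preserves area, since chord lengths are preserved. Any two triangles of equal area are $\SL(2,\R)$-equivalent up to translation: an affine map sends one onto the other, and its linear part has determinant $\pm1$. If the determinant is $-1$, we compose with an area-preserving affine symmetry of the target that reverses orientation; such a symmetry exists because triangles are affinely equivalent to an isosceles one. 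By induction, $\Sigma_{\mathbf U}T\in[T]_{\SL(2,\R)}$ for every word $\mathbf U$. By homogeneity, $rT\in\mathcal I_{\SL(2,\R)}(S^0)$ for all $r>0$.

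\emph{Step 2: tetheredness.} By Blaschke's result recalled in the introduction, every $L\in\Ktt$ can be brought arbitrarily close to a triangle by finitely many shakings. In other words, triangles are $S^0$-universal. Applying this to each $L\in\Sigma^*K$ gives $\deltaId(\Sigma^*L,\mathcal I_{\SL(2,\R)}(S^0))=0$, so $K$ is $S^0$-tethered. One point needs a brief check. Universality as defined uses a limit of a single sequence, whereas tetheredness only needs the infimum over the orbit, so it is the weaker condition. Closeness to \emph{some} triangle, rather than a fixed one, also suffices, since all triangles lie in $\mathcal I_{\SL(2,\R)}(S^0)$. Theorem~\ref{theorem:characterization_tethered} now gives almost sure convergence modulo $\SL(2,\R)$ to an $S^0$-invariant body $L^*$.

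\emph{Step 3: identifying the limit.} Lemma~\ref{lemma:sigma_invariance_and_sigma_universality}, applied with $K$ there a triangle (universal) and $L^*$ invariant, shows that $L^*$ is $\SL(2,\R)$-equivalent to a homothet of a triangle, hence is itself a triangle. Its area equals $\vol K$ for two reasons: shaking preserves area, and convergence in $\delta_{\SL(2,\R)}$ preserves area, because volume is continuous in $\deltaId$ and $\SL$-invariant. By Step~1, all triangles of area $\vol K$ form a single class, so the limit is the class of any such $T$.

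The main obstacle is the universality input in Step~2. If we do not take Blaschke's theorem for granted, we must show that finitely many shakings bring any planar body close to a triangle. The approach I would try first is the standard argument: shake in the direction of a diameter, then use monotonicity of a suitable functional and compactness modulo affine maps. Everything else is routine bookkeeping with the results of Sections~\ref{section:basic_properties_of_steiner_operations}--\ref{section:compact_recurrence_of_planar_lambda_steiner_processes}.
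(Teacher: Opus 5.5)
Your proposal is correct and follows the paper's proof. The ingredients are the same: compact recurrence from Proposition~\ref{proposition:planar_lambda_steiner_symmetrization_is_compactly_recurrent}, tetheredness from universality plus $\SL(2,\R)$-invariance of triangles, Theorem~\ref{theorem:characterization_tethered} for almost sure convergence, and Lemma~\ref{lemma:sigma_invariance_and_sigma_universality} with area preservation to identify the limit.

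One precision: Step~3 needs a \emph{fixed} triangle that is $S^0$-universal in the paper's sense, which is the Campi--Colesanti--Gronchi result the paper cites (e.g.\ for $\conv\{0,e_1,e_2\}$). Blaschke's ``close to some triangle'' statement only suffices for the tetheredness in Step~2.
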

\begin{proof}
    As shown by Campi--Colesanti--Gronchi \cite{ccg01}, the triangle $T\coloneqq \conv\{0,e_1,e_2\}$ is $S^0$-universal. (That any planar convex body can be brought arbitrarily close to some triangle was already shown by Blaschke \cite{bla17}.) Since $T$ is $S^0$-invariant modulo $\SL(2,\R)$, this shows that any $L \in \Ktt$ is $S^0$-tethered modulo $\SL(2,\R)$. Moreover, by Lemma~\ref{lemma:sigma_invariance_and_sigma_universality}, it follows that triangles are the only $S^0$-invariant bodies in the plane modulo $\SL(2,\R)$. Now the claim follows from Proposition~\ref{proposition:planar_lambda_steiner_symmetrization_is_compactly_recurrent} and Theorem~\ref{theorem:characterization_tethered}, using that shaking preserves volume.
\end{proof}

\begin{remark}
    Numerical experiments suggest that the convergence of the random shaking process starting at $K\in \Ktt$ to the triangle is usually quite fast. It is natural to ask for quantitative bounds on the speed of convergence; corresponding questions for (random) Steiner symmetrization have been studied by Hadwiger \cite{had52}, Bourgain--Lindenstrauss--Milman \cite{blm89}, Bianchi--Gronchi \cite{bg03}, Klartag--Milman \cite{km03} and Klartag \cite{kla04}. Clearly, an analysis of the speed of convergence for random shaking processes would require replacing the abstract framework from Section~\ref{section:random_convergence_of_generalized_steiner_processes} by methods that are more concretely tailored to the structural properties of $\lambda$-Steiner symmetrization.
\end{remark}

\section{A characterization of ellipses and triangles} \label{section:a_characterization_of_ellipses_and_triangles}

In Section~\ref{section:compact_recurrence_of_planar_lambda_steiner_processes}, we have shown that planar random $\lambda$-Steiner processes are compactly recurrent. In view of Theorem~\ref{theorem:characterization_tethered} and Theorem~\ref{theorem:sufficient_condition_errant}, this resolves the planar version of Question~\ref{question:precise_question} for all triples $(K,\lambda,G)$ where $K$ is either $S^\lambda$-errant or $S^\lambda$-tethered modulo $G$. We already know that any $K\in \Ktt$ is $S^{\frac12}$-tethered modulo $\{\Id\}$ and $S^0$-tethered modulo $\SL(2,\R)$. In this section, we show that the cases $\lambda \in \{0,\frac12\}$ are exceptional in the following sense: for every other value of $\lambda \in [0,\frac12]$, any $K \in \Ktt$ is not only not $S^\lambda$-tethered, but even $S^\lambda$-errant modulo any closed $G\leq\GL(2,\R)$, because there is no $S^\lambda$-invariant body modulo $G$. In other words, the property of being $S^\lambda$-invariant for some $\lambda \in [0,\frac12]$ characterizes ellipses and triangles.
\begin{definition}
    We say that $K\in \Knn$ is \emph{$\lambda$-invariant} if it is $S^\lambda$-invariant modulo $\GL(n,\R)$.
\end{definition}
\begin{theorem} \label{theorem:characterization_of_ellipses_and_triangles}
    Let $K \in \Ktt$ and $\lambda \in [0,\frac12]$ such that $K$ is $\lambda$-invariant. Then either $\lambda=0$ and $K$ is a triangle or $\lambda=\frac12$ and $K$ is an ellipse.
\end{theorem}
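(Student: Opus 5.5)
The plan is to exploit affine-invariant quantities that $S^\lambda$ changes in a controlled way, and to force $K$ to be extremal for them.

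\textbf{Step 1: symmetric case.} For $\lambda=\frac12$, every Steiner symmetral $S_uK$ is symmetric with respect to the line $u^\bot$. If $K$ is $\frac12$-invariant, then $K$ is an affine image of a body with a line of symmetry in every direction. Such a body, an affine image of each $S_uK$, has an affine-reflection symmetry for every direction. I would argue via the John ellipse $E$ of $K$, which is unique and therefore preserved by every affine symmetry of $K$. The affine reflections fixing $E$ are exactly Euclidean reflections after normalizing $E$ to a ball. Hence $K$ is invariant under reflections in lines in a dense set of directions. This makes it invariant under all rotations about the centre, so $K$ is a ball, i.e.\ an ellipse. For the remaining $\lambda<\frac12$ I must show that $K$ is a triangle and that $\lambda=0$.

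\textbf{Step 2: $\lambda\neq\frac12$ forces non-symmetry information.} I would first treat centrally symmetric $K$. Take $u$ arbitrary. The chord-length function $\ell=f+g$ of $K$ is concave, and the lower boundary of $S^\lambda_uK$ is $\lambda\ell$ (up to a linear shift), while the upper boundary is $(1-\lambda)\ell$. Being an affine image of $K$, $S_u^\lambda K$ must be centrally symmetric. Central symmetry of a body with boundaries $-\lambda\ell$ and $(1-\lambda)\ell$ over an interval forces $\lambda\ell(t)=(1-\lambda)\ell(-t)+\text{affine}$. Combined with the reverse relation, this gives $\ell$ affine unless $\lambda=\frac12$. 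An affine chord-length function in every direction means $K$ is a triangle, which contradicts central symmetry. So for $\lambda<\frac12$ a $\lambda$-invariant body is non-symmetric, and in fact I expect the chord-length argument to generalize.

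\textbf{Step 3: main step, a body with affine chord functions.} I would use an affine-invariant functional $\Phi$ that is monotone under $S^\lambda$ with equality cases. A natural choice is the ratio $\vol(K-K)/\vol K$, or equivalently the difference-body volume. The key computation is that $S^\lambda_uK - S^\lambda_uK$ has chord function in direction $u$ given by $\sup_s[(1-\lambda)\ell(x)+\lambda\ell(y)]$-type expressions. By Brunn--Minkowski-type concavity, I expect $\vol(S^\lambda_uK-S^\lambda_uK)\ge\vol(K-K)$, with equality only when the chord function $\ell$ is affine along relevant segments, at least when $\lambda<\frac12$ is far from symmetric. Since $K$ is an affine image of $S^\lambda_uK$ and volume ratios are affine invariant, equality must hold for every $u$. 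From this I would deduce that $\ell_u$ is affine on $\pi_{u^\bot}K$ for all $u$. Such bodies are exactly triangles: a planar body all of whose chord-length functions are affine is a triangle, by a Rogers--Shephard-type equality case. This is the step I expect to be the main obstacle. The equality analysis must genuinely use $\lambda\neq\frac12$, since for $\lambda=\frac12$ ellipses also achieve equality. If the difference-body approach fails, my fallback is the direct one: compare the boundary curvature or the support structure at the extreme points of $\pi_{u^\bot}K$, where $S^\lambda_u$ creates a corner whenever $\ell$ fails to vanish linearly.

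\textbf{Step 4: triangle forces $\lambda=0$.} Let $K=T$ be a triangle and $\lambda\in(0,\frac12]$. Choose $u$ parallel to a side. Then $\ell$ is a tent function with maximum at the opposite vertex. The body $S^\lambda_uT$ has upper boundary $(1-\lambda)\ell$ and lower boundary $-\lambda\ell$, so it is a quadrilateral with four genuine vertices whenever $\lambda>0$. Since a quadrilateral is not affinely a triangle, we get $\lambda=0$.
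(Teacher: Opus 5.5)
\paragraph{Gap in Step 3.} Step 3 is the only place where $\lambda\in(0,\tfrac12)$ is supposed to be excluded and where $\lambda=0$ is supposed to force a triangle, and it rests on a false premise. The quantity $\Phi(K)=\vol(K-K)/\vol K$ is not monotone under $S^\lambda_u$ for any $\lambda\in(0,\tfrac12)$. It moves in both directions:
\begin{itemize}
\item Let $T$ be a triangle and $u$ not parallel to a side. Your Step 4 shows $S^\lambda_uT$ is a quadrilateral, so the equality case of the planar Rogers--Shephard inequality gives $\Phi(S^\lambda_uT)<6=\Phi(T)$.
\item Let $K$ be a disk. Your Step 2 shows $S^\lambda_uK$ is not centrally symmetric, so the equality case of Brunn--Minkowski gives $\Phi(S^\lambda_uK)>4=\Phi(K)$.
\end{itemize}
For $\lambda=\tfrac12$ the inequality you expect is even reversed, since $S_uK-S_uK\subseteq S_u(K-K)$. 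So the identity $\Phi(S^\lambda_uK)=\Phi(K)$, which every $\lambda$-invariant body satisfies trivially, carries no information, and there is no equality case to analyze.

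The intended endpoint is also wrong. The chord functions of a triangle are tent-shaped, not affine. A body whose chord function in direction $u$ is affine on all of $\pi_{u^\bot}K$ is a trapezoid with two sides parallel to $u$, so no body has this for every $u$. Your curvature fallback is not yet an argument.

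This is exactly the obstruction the paper stresses: for intermediate $\lambda$ there is no monotone quantity to exploit. The paper instead proceeds as follows.
\begin{itemize}
\item Invariance yields $\lambda$-flat directions: if $S^\lambda_uK$ is affinely equivalent to $K$, some $\lambda$-flat $v$ has X-ray affinely equivalent to the one in direction $u$.
\item With $K$ in John position, its symmetry group is finite and dihedral. A continuity argument on a fundamental domain of this group produces an open arc of $\lambda$-flat directions, using that not all X-rays are affinely equivalent when $\lambda\neq\tfrac12$.
\item This arc forces a $C^2_+$ boundary.
\item A second-order ODE for the chord length $h(\theta)$ through a boundary point in direction $u(\theta)$ shows that the boundary contains a circular arc.
\item A curvature comparison on the opposite elliptic arc then forces $\lambda=\tfrac12$.
\end{itemize}

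\paragraph{Gap in Step 1.} From $K=\varphi_u(S_uK)$ you get the affine reflection $\varphi_u\rho_u\varphi_u^{-1}$. In John position this is a Euclidean reflection in the line $O_u^{-1}(u^\bot)$, where $O_u\in\Orthogonal(2)$ is not under your control. All these reflections may coincide, so you only learn that $K$ has at least one reflection symmetry. That is precisely what the paper's reflection-symmetry lemma extracts, not a dense set of axes.

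Both endpoint cases are handled cleanly by universality. The $\GL(2,\R)$-class of $K$ is closed modulo translations and contains the entire forward orbit of $K$. Hence it contains a disk when $\lambda=\tfrac12$, since the ball is universal for Steiner symmetrization. It contains a triangle when $\lambda=0$, by Blaschke's result for shaking. This is the paper's lemma relating $\Sigma$-invariance and $\Sigma$-universality, and it also supplies the case $\lambda=0$, which your proposal otherwise reaches only through the failed Step 3.

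Steps 2 and 4 are correct, but they only rule out centrally symmetric bodies and triangles. That is far from excluding all $\lambda$-invariant bodies for $\lambda\in(0,\tfrac12)$.
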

The remainder of this section is devoted to proving Theorem~\ref{theorem:characterization_of_ellipses_and_triangles}.

According to Definition \ref{definition:lambda_steiner_symmetrization}, $\lambda$-Steiner symmetrization in direction $u\in \sphere$ is a map $\Knn \rightarrow \Knn$. For some purposes, however, it is useful to think of $\lambda$-Steiner symmetrization as acting on the points of $K$ individually: the chords in direction $u$ are moved in their respective affine hulls until the hyperplane $u^\bot$ cuts them at the ratio $\lambda$.%

\begin{definition} \label{definition:pointwise_lambda_symmetrization}
    Let $K \in \Knn$ and $u \in \sphere$. For $\lambda \in [0,\frac12]$, we define the function 
    \begin{equation}
       s^\lambda_{K,u}\colon (K+\linh\{u\})\rightarrow \R, \quad x \mapsto (1-\lambda)f_{K,u}(\pi_{u^\bot}(x))-\lambda g_{K,u}(\pi_{u^\bot}(x))
    \end{equation}
    and define the \emph{pointwise $\lambda$-Steiner symmetrization} as the map
    \begin{equation}
        S_{K,u}^\lambda \colon  K \rightarrow \Sulambda K, \quad x \mapsto x-s_{K,u}^\lambda(x)u.
    \end{equation}
    We say that $u$ is \emph{$\lambda$-flat for $K$} if $S_{K,u}^\lambda$ is affine, which is equivalent to $s_{K,u}^\lambda$ being affine.
\end{definition}

We recover the usual $\lambda$-Steiner symmetrization map $\Knn\rightarrow\Knn$ by $\Sulambda K={S^\lambda_{K,u}(K)}$. %
Defining the \emph{$\lambda$-section} of $K$ in direction $u$ by
\begin{equation} \label{equation:definition_lambda_section}
    H_u^\lambda K \coloneqq(S_{K,u}^\lambda)^{-1}(u^\bot)=\{x+((1-\lambda)f_{K,u}(x)-\lambda g_{K,u}(x))u\mid x\in \pi_{u^\bot}(K)\}
\end{equation}
the direction $u$ is $\lambda$-flat if and only if $H_u^\lambda K$ is a hyperplane section of $K$. %

\begin{lemma} \label{lemma:set_of_lambda_flat_directions_is_closed}
    For $K \in \Knn$ and $\lambda \in [0,\frac12]$, the set of $\lambda$-flat directions for $K$ is a closed set in $\sphere$.
\end{lemma}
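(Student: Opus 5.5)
We show that the complement of the set of $\lambda$-flat directions is open; equivalently, we take a sequence $u_i \to u$ of $\lambda$-flat directions and show that $u$ is $\lambda$-flat. The natural characterization is the one recorded after \eqref{equation:definition_lambda_section}: $u$ is $\lambda$-flat if and only if the $\lambda$-section $H_u^\lambda K$ is contained in a hyperplane. So we aim to pass to the limit in the statement ``$H_{u_i}^\lambda K$ lies in an affine hyperplane $E_i$''.

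The first step is a continuity statement: $H_{u_i}^\lambda K \to H_u^\lambda K$ in a suitable sense as $u_i \to u$. Parametrize $H_v^\lambda K$ by $x\in\pi_{v^\bot}(K)$ and use the formula in \eqref{equation:definition_lambda_section}. In the interior of the projection, $f_{K,v}$ and $g_{K,v}$ depend continuously on $(v,x)$, since the endpoints of a chord through an interior point of $K$ vary continuously with the line. Concretely, for a point $z\in\interior K$, the chord of $K$ through $z$ in direction $v$ has endpoints depending continuously on $(z,v)$, because the boundary of a convex body is hit transversally from interior points. So on the relatively open set of points of $H_u^\lambda K$ that lie in $\interior K$, each point is the limit of points of $H_{u_i}^\lambda K$: take $z\in H_u^\lambda K\cap\interior K$, consider the chords through $z$ in direction $u_i$, and let $z_i$ be the point dividing that chord in ratio $\lambda$. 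Then $z_i\in H_{u_i}^\lambda K$ and $z_i\to z$.

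Next, extract a limit of the hyperplanes. Each $E_i$ meets the compact set $K$, and the unit normals lie in a compact sphere, so after passing to a subsequence $E_i\to E$ for some affine hyperplane $E$ meeting $K$. Since $z_i\in E_i$ and $z_i\to z$, we get $z\in E$. Hence $H_u^\lambda K\cap\interior K\subset E$.

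Finally, deal with the boundary points. The set $H_u^\lambda K\cap\interior K$ corresponds to $x\in\relint \pi_{u^\bot}(K)$ with a nondegenerate chord. The function $x\mapsto (1-\lambda)f_{K,u}(x)-\lambda g_{K,u}(x)$ is continuous on the relative interior of the projection, and its graph there lies in $E$, so the function agrees with an affine function there. The remaining issue is that $s^\lambda_{K,u}$ must be affine on all of $\pi_{u^\bot}(K)$, including boundary points, where $f$ and $g$ are only lower semicontinuous convex functions and may jump. This is the step I expect to be the main obstacle. I would resolve it in one of two ways. One option is to note that flatness is equivalent to affinity on the relative interior, which is the natural reading, since boundary values of the convex functions $f,g$ are determined by the body only up to the closure. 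The other is a direct argument: the chord over a boundary point of the projection is a limit of interior chords, so along segments into the interior $f+g$ and $-g$ are continuous by convexity restricted to segments, and the identity extends by taking limits along such segments. With this in place, $s^\lambda_{K,u}$ is affine, so $u$ is $\lambda$-flat, and the set of $\lambda$-flat directions is closed.
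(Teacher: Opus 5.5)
Your argument has a genuine gap at the endpoint $\lambda=0$, which the statement includes. There the $\lambda$-section $H_u^0K=\{x+f_{K,u}(x)u\mid x\in\pi_{u^\bot}(K)\}$ is the lower boundary of $K$ in direction $u$, so $H_u^0K\cap\interior K=\emptyset$. Your approximation step picks $z\in H_u^\lambda K\cap\interior K$ and takes the $\lambda$-points of the $u_i$-chords through $z$; for $\lambda=0$ there is no such $z$, so the step yields nothing. You also claim that $H_u^\lambda K\cap\interior K$ is exactly the part of $H_u^\lambda K$ lying over $\relint\pi_{u^\bot}(K)$. That holds only for $\lambda\in(0,\frac12]$, where the $\lambda$-point of a chord meeting $\interior K$ lies in the chord's relative interior. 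So for shaking you never show that $s^0_{K,u}=f_{K,u}$ agrees with an affine function on $\relint\pi_{u^\bot}(K)$.

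The repair is to separate the base point from the $\lambda$-point. For $x\in\relint\pi_{u^\bot}(K)$, choose any $z\in\interior K$ with $\pi_{u^\bot}(z)=x$. The endpoints of the $u_i$-chord through $z$ converge to those of the $u$-chord through $z$. The $\lambda$-point is a fixed convex combination of the endpoints, so the $\lambda$-points of the $u_i$-chords, which lie in $H_{u_i}^\lambda K\subset E_i$, converge to $x+s^\lambda_{K,u}(x)u$. This is exactly the pointwise continuity the paper uses: $u\mapsto f_{K,u}(\pi_{u^\bot}(x))$ and $u\mapsto g_{K,u}(\pi_{u^\bot}(x))$ are continuous for fixed $x\in\interior K$. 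With this change your argument works uniformly on $[0,\frac12]$. You should also note that the limit hyperplane $E$ cannot contain the direction $u$, since it contains a graph over the nonempty open set $\relint\pi_{u^\bot}(K)\subset u^\bot$.

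For the boundary step, your first option is not an argument. Since $K$ is closed, $f_{K,u}$ and $g_{K,u}$ are determined on all of $\pi_{u^\bot}(K)$, and flatness refers to that whole domain. Your second option is the right one, but it needs both ingredients: convexity gives upper semicontinuity at a segment's endpoint, and lower semicontinuity comes from the closedness of $K$. That is precisely the paper's reduction to $\interior K$.

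Beyond this, your route differs from the paper's. The paper shows that $u\mapsto s^\lambda_{K,u}|_{\interior K}$ is continuous into an $L^2$-normed space, by dominated convergence, and uses that the affine functions form a closed finite-dimensional subspace. You instead use compactness of the set of hyperplanes meeting $K$. Both rest on the same pointwise continuity; yours avoids the functional-analytic setup at the cost of a subsequence extraction.
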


\begin{proof}
    We first observe that $f_{K,u}$ is lower semicontinuous because $K$ is closed. Since $f_{K,u}$ is also convex, it follows that the restriction $f_{K,u}|_I$ to an arbitrary line segment $I\subset \pi_{u^\bot}(K)$ is continuous. Because the same reasoning holds for $g_{K,u}$, the function $s_{K,u}^\lambda$ is affine if and only if $s_{K,u}^\lambda|_{\interior K}$ is affine. In the following, we consider $s_{K,u}^\lambda|_{\interior K}$, which has the advantage of being continuous.
    
    Let $\mathcal A$ be the class of affine functions $\interior K\rightarrow \R$. We consider the quantity
    \begin{equation}
          d_\lambda(K,u) \coloneqq  \inf_{a \in \mathcal A}  \int_{\interior K} |s^\lambda_{K,u}(x)-a(x)|^2 \dint x.
    \end{equation}
    Since the $L^2$-norm $\norm{f}_{2} \coloneqq \left(\int_{\interior K} |f(x)|^2 \dint x\right)^\frac12$
    is a norm on the vector space $V$ of bounded continuous functions $\interior K \rightarrow \R$ and since $\mathcal A$ is a closed subset of this normed space, the claim follows if we show that $u\mapsto s^\lambda_{K,u}$ is a continuous map $\sphere \rightarrow (V,\dotnorm_2)$ .
    For this, let $(u_i)_{i \in \N}$ be a sequence in $\sphere$ that converges to $u \in \sphere$. Since $u\mapsto f_{K,u}(\pi_{u^\bot}(x))$ and $u\mapsto g_{K,u}(\pi_{u^\bot}(x))$ are both continuous for every $x \in \interior K$, the sequence of functions $(s^\lambda_{K,u_i})_{i \in \N}$ converges pointwise to $s^\lambda_{K,u}$. Moreover, because $\sup_{x \in \interior K}|s^\lambda_{K,u}(x)| \leq \sup_{x \in \interior K}\norm{x}<\infty$ for all $u \in \sphere$, it follows from the dominated convergence theorem that $(s^\lambda_{K,u_i})_{i \in \N}$ also converges to $s^\lambda_{K,u}$ in $(V,\dotnorm_2)$, as claimed.
\end{proof}

It is immediate from Definition~\ref{definition:pointwise_lambda_symmetrization} that $\Sulambda K$ is affinely equivalent to $K$ for any $\lambda$-flat direction $u\in \sphere$. Since $S^{\lambda}_{-u}K$ is a reflection of $\Sulambda K$, it follows that $K$ is $\lambda$-invariant if for every $u\in \sphere$ at least one of the directions $u$ and $-u$ is $\lambda$-flat. This condition is satisfied by triangles for $\lambda=0$ and by ellipses for $\lambda=\frac12$. Conversely, if we assume that $u$ or $-u$ is $\lambda$-flat for $K \in \Ktt$ and every $u \in \sphereone$, then it is relatively straightforward to show that $K$ must be an ellipse or a triangle (by Lemma~\ref{lemma:set_of_lambda_flat_directions_is_closed}, there exists a $u\in\sphereone$ such that both $u$ and $-u$ are $\lambda$-flat). But this cannot be used to solve our problem. The fact that $\Sulambda K$ is affinely equivalent to $K$ does not imply that $u$ or $-u$ is $\lambda$-flat for $K$, because the affine equivalence can be witnessed by an affine map that is completely unrelated to $S_{K,u}^\lambda$ and $S_{K,-u}^\lambda$. On the other hand, $u$ is trivially $\lambda$-flat for $\Sulambda K$. In order to state what this tells us about $K$, we need the following notion from geometric tomography \cite[Ch.~1,2]{gar06}.

\begin{definition}
    The \emph{(parallel) X-ray of $K \in \Knn$ in direction $u \in \sphere$} is the pair $(S_uK,u)\in\Knn\times \sphere$. We say that two X-rays $(S_uK,u)$ and $(S_{\tilde u}K, \tilde u)$ are \emph{affinely equivalent} if there exists an affine automorphism $\varphi$ of $\R^n$ with
    \begin{equation}\label{eq:induced_bijection_hat}
         S_{\tilde u}K = \varphi(S_uK) \quad \text{and} \quad \tilde u = \hat \varphi(u)\coloneqq  \frac{\varphi(u)-\varphi(0)}{\norm{\varphi(u)-\varphi(0)}}.
    \end{equation}
\end{definition}

The X-ray in direction $u$ can also be defined as the function $|f_{K,u} + g_{K,u}| \colon \pi_{u^\bot}(K) \rightarrow\R$, but the definition in terms of the pair $(S_uK,u)$ has the advantage of providing a natural notion of affine equivalence for two X-rays of $K$. Using this notion, we can now formulate precisely what information we get from the affine equivalence of $\Sulambda K$ and $K$.

\begin{lemma}\label{lemma:lambda_steiner_affine_equivalence_flatness}
    If $\Sulambda K$ is affinely equivalent to $K$, then there exists a $\lambda$-flat direction $v \in \sphere$ such that the X-rays of $K$ in the directions $u$ and $v$ are affinely equivalent.
\end{lemma}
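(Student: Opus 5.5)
Assume $\varphi$ is an affine automorphism with $\varphi(K)=S^\lambda_u K$ (up to translation; absorb the translation into $\varphi$). The idea is to pull the trivially flat direction $u$ of $S^\lambda_uK$ back to $K$ via $\varphi$, and define $v\coloneqq\hat{\varphi^{-1}}(u)$, the unit vector in the direction of the linear part of $\varphi^{-1}$ applied to $u$.

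First, observe that $u$ is $\lambda$-flat for $S^\lambda_uK$. By construction, the chords of $L\coloneqq S^\lambda_uK$ in direction $u$ are cut by $u^\bot$ at ratio $\lambda$, so $f_{L,u}=\lambda(f_{K,u}+g_{K,u})$ and $g_{L,u}=(1-\lambda)(f_{K,u}+g_{K,u})$. Hence $s^\lambda_{L,u}=(1-\lambda)\lambda(f+g)-\lambda(1-\lambda)(f+g)=0$, which is affine.

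Next, check that $\lambda$-flatness and X-rays transform naturally under affine maps. An affine map $\psi$ sends chords of $K$ in direction $w$ to chords of $\psi(K)$ in direction $\hat\psi(w)$ and preserves ratios along lines. The $\lambda$-section $H^\lambda_wK$ consists of the points dividing each chord at ratio $\lambda$ (measured from the appropriate endpoint determined by the orientation of $w$), so $\psi(H^\lambda_wK)=H^\lambda_{\hat\psi(w)}\psi(K)$. Flatness means this section lies in a hyperplane, and this property is preserved by $\psi$. Applying this with $\psi=\varphi^{-1}$ shows that $v$ is $\lambda$-flat for $K$. The orientation needs care because $\lambda\neq 1-\lambda$ distinguishes $w$ from $-w$; the definition of $\hat\psi$ via $\psi(w)-\psi(0)$ respects orientation, so this works.

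It remains to show that the X-rays of $K$ in directions $u$ and $v$ are affinely equivalent, and this is the main step. The same naturality for Steiner symmetrization gives the following: for each $w$, the body $\psi(S_wK)$ has the same chord lengths (scaled by a constant factor) in direction $\hat\psi(w)$ as $\psi(K)$. Moreover, $S_{\hat\psi(w)}\psi(K)$ is obtained from $\psi(S_wK)$ by a shear along $\hat\psi(w)$ fixing the chord lengths, centering each chord on the hyperplane $\hat\psi(w)^\bot$. Since $\psi(S_wK)$ is symmetric about the hyperplane $\psi(w^\bot)$, its chord midpoints lie on that hyperplane, and the map moving those midpoints to $\hat\psi(w)^\bot$ is a linear shear. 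Hence $S_{\hat\psi(w)}\psi(K)=\chi\circ\psi(S_wK)$ for an affine $\chi$ with $\hat\chi(\hat\psi(w))=\hat\psi(w)$.

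Also, $S_uK$ and $S_u(S^\lambda_uK)$ coincide, since both have the same projection and chord lengths, and are centered. Combining these facts: $S_vK=S_v\varphi^{-1}(S^\lambda_uK)$, which is, by the naturality above with $\psi=\varphi^{-1}$ and $w=u$, an affine image $\chi\varphi^{-1}(S_uS^\lambda_uK)=\chi\varphi^{-1}(S_uK)$. Taking the affine map $\chi\circ\varphi^{-1}$, which sends $u$ to $v$ on the level of directions, gives the required equivalence of the X-rays $(S_uK,u)$ and $(S_vK,v)$. The obstacle is bookkeeping: ensuring the shear $\chi$ preserves the direction $v$ and handling translations and orientation consistently.
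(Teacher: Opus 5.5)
Your proposal is correct and follows essentially the same route as the paper: take $v=\hat\varphi^{-1}(u)$, use $S_uK=S_u\Sulambda K$ together with the affine naturality of Steiner and $\lambda$-Steiner symmetrization (the paper's Lemma~\ref{lemma:lambda_steiner_affine_transformation}, which you re-derive via the shear $\chi$ and the $\lambda$-sections instead of citing), and pull back the trivial $\lambda$-flatness of $u$ for $\Sulambda K$ to $K$. The differences are cosmetic: you apply the naturality to $\varphi^{-1}$ and $\Sulambda K$ rather than to $\varphi$ and $K$, and your shear $\chi$ is in general affine rather than linear (as you yourself note in the next sentence).
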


In order to prove Lemma~\ref{lemma:lambda_steiner_affine_equivalence_flatness}, we briefly study how $\lambda$-Steiner symmetrization interacts with affine automorphisms of $\R^n$.

\begin{lemma} \label{lemma:lambda_steiner_affine_transformation}
    Let $\lambda \in [0,\frac12]$,  $K \in \Knn$ and $u \in \sphere$. If $\varphi$ is an affine automorphism of $\R^n$, then the bijective map
    \begin{equation}
        S^\lambda_{\varphi(K),\hat \varphi(u)}\circ \varphi\circ (S^\lambda_{K,u})^{-1}\colon  \Sulambda K \rightarrow S_{\hat \varphi(u)}^\lambda \varphi(K)
    \end{equation}
    extends to an affine automorphism $\psi$ of $\R^n$, which satisfies
    \begin{equation}
        \hat\psi(u)=\hat\varphi(u) \quad \text{and} \quad \psi(\Sulambda K)=S^\lambda_{\hat \varphi(u)}\varphi(K).
    \end{equation}
    If $\varphi$ is linear and angle-preserving, then $\psi=\varphi$. 
    The direction $u$ is $\lambda$-flat for $K$ if and only if $\hat\varphi(u)$ is $\lambda$-flat for $\varphi(K)$.
\end{lemma}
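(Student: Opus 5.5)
Write $\varphi(x)=Ax+b$ with $A\in\GL(n,\R)$, and put $\hat u\coloneqq\hat\varphi(u)=Au/\norm{Au}$. The plan is to compute the composite map explicitly and check that it is affine.

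\textbf{Step 1: chords are sent to chords.} Since $\varphi$ maps lines parallel to $u$ onto lines parallel to $\hat u$, it sends the chord of $K$ through $x$ in direction $u$ onto the chord of $\varphi(K)$ through $\varphi(x)$ in direction $\hat u$. Along that line it acts as an affine map with scale factor $\norm{Au}$ and preserves the orientation induced by $u$ and $\hat u$. Both pointwise symmetrizations $S^\lambda_{K,u}$ and $S^\lambda_{\varphi(K),\hat u}$ are translations along such chords. Each translation is determined by the requirement that, after translating, the hyperplane $u^\bot$ (respectively $\hat u^\bot$) cuts the chord at ratio $\lambda$ measured from the lower endpoint.

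\textbf{Step 2: define the candidate map.} For $y\in\Sulambda K$ on the line $z+\R u$ with $z\in u^\bot$, write $y=z+tu$. Then $(S^\lambda_{K,u})^{-1}(y)$ is the point of the chord of $K$ over $z$ at signed offset $t$ from the $\lambda$-point of that chord. Applying $\varphi$ gives the point at offset $t\norm{Au}$ from the $\lambda$-point of the image chord, because $\varphi$ preserves affine ratios along lines. Applying $S^\lambda_{\varphi(K),\hat u}$ then moves this point to $z'+t\norm{Au}\,\hat u$. Here $z'$ is the intersection of $\hat u^\bot$ with the line through $\varphi(z)$ in direction $\hat u$, that is, $z'=\pi_{\hat u^\bot}(\varphi(z))$. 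Hence the composite equals
\[
z+tu\;\mapsto\;\pi_{\hat u^\bot}(Az+b)+t\,Au .
\]
This formula is affine in $(z,t)$, so it defines an affine automorphism $\psi$ of $\R^n$ on all of $\R^n$. It is invertible because $\pi_{\hat u^\bot}\circ A$ maps $u^\bot$ bijectively onto $\hat u^\bot$: $A$ maps $u^\bot$ onto a hyperplane transverse to $\hat u$. From the formula, $\psi(u)-\psi(0)=Au$, so $\hat\psi(u)=\hat u$. The identity $\psi(\Sulambda K)=S^\lambda_{\hat u}\varphi(K)$ holds by construction.

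\textbf{Step 3: the linear angle-preserving case.} Suppose $A=cO$ with $O$ orthogonal and $b=0$. Then $A$ maps $u^\bot$ onto $\hat u^\bot$, so $\pi_{\hat u^\bot}(Az)=Az$ for $z\in u^\bot$. The formula reduces to $\psi(z+tu)=Az+tAu$, that is, $\psi=\varphi$.

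\textbf{Step 4: flatness.} The identity $S^\lambda_{\varphi(K),\hat u}=\psi\circ S^\lambda_{K,u}\circ\varphi^{-1}$ holds on $\varphi(K)$ with $\psi,\varphi$ affine. So $S^\lambda_{\varphi(K),\hat u}$ is affine if and only if $S^\lambda_{K,u}$ is, i.e., $u$ is $\lambda$-flat for $K$ if and only if $\hat\varphi(u)$ is $\lambda$-flat for $\varphi(K)$. For the converse direction, apply the same identity to $\varphi^{-1}$, noting that $\widehat{\varphi^{-1}}(\hat u)=u$.

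The only delicate point is Step 2: checking that the lower endpoint of each chord is sent to the lower endpoint of the image chord, so that the ratio $\lambda$ is measured from the matching end. This holds because $Au$ is a positive multiple of $\hat u$, so $\varphi$ preserves orientation along chords.
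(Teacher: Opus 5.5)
Your proof is correct and is essentially the paper's argument. The paper also computes the composite explicitly, obtaining $x\mapsto\varphi(x)-\langle\hat\varphi(u),\varphi(\pi_{u^\bot}(x))\rangle\,\hat\varphi(u)$, which is exactly your $z+tu\mapsto\pi_{\hat u^\bot}(Az+b)+tAu$, and it likewise derives the flatness equivalence from the identity $\psi\circ S^\lambda_{K,u}=S^\lambda_{\varphi(K),\hat\varphi(u)}\circ\varphi$. The differences are cosmetic: the paper reaches the formula through transformation rules for $f_{K,u}$, $g_{K,u}$ and $s^\lambda_{K,u}$ instead of your chord and affine-ratio description, and you additionally check explicitly that $\psi$ is invertible.
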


\begin{proof}
    Using the notation from \eqref{eq:induced_bijection_hat}, we write $v\coloneqq\hat\varphi(u)$ and $c \coloneqq \norm{\varphi(u)-\varphi(0)}$. We first observe that
    \begin{equation}
        f_{\varphi(K),v}(\pi_{v^\bot}(\varphi(x)))=cf_{K,u}(\pi_{u^\bot}(x))+\scpr{v,\varphi(\pi_{u^\bot}(x))}
    \end{equation}
    and
    \begin{equation}
        g_{\varphi(K),v}(\pi_{v^\bot}(\varphi(x)))=c g_{K,u}(\pi_{u^\bot}(x))-\scpr{v,\varphi(\pi_{u^\bot}(x))}
    \end{equation}
    for $x \in  (K+\linh\{u\})$, which implies
    \begin{equation}
        s_{\varphi(K),v}^\lambda(\varphi(x)) = c s^\lambda_{K,u}(x)+\scpr{v,\varphi(\pi_{u^\bot}(x))} \quad \text{for all } x\in (K+\linh\{u\}).
    \end{equation}
    For any $x \in  \Sulambda  K$, this yields
    \begin{align*}
        (S^\lambda_{\varphi(K),v}\circ \varphi\circ (S^\lambda_{K,u})^{-1})(x)&=(S^\lambda_{\varphi(K),v}\circ \varphi)(x+s_{K,u}^\lambda(x)u)\\
        &=S^\lambda_{\varphi(K),v}\left(\varphi(x)+s_{K,u}^\lambda(x)(\varphi(u)-\varphi(0))\right)\\
        &=\varphi(x)+s_{K,u}^\lambda(x)(\varphi(u)-\varphi(0))-s_{\varphi(K),v}^\lambda\left(\varphi(x)+s_{K,u}^\lambda(x)(\varphi(u)-\varphi(0))\right)v\\
        &=\varphi(x)+c s_{K,u}^\lambda(x)v-s^\lambda_{\varphi(K),v}\left(\varphi(x)\right)v\\
        &=\varphi(x)-\scpr{v,\varphi(\pi_{u^\bot}(x))}v,
    \end{align*}
    where the last expression is affine in $x$. If $\varphi$ is linear and angle-preserving, then
    \begin{equation}
        \norm{\varphi(u)}\cdot\scpr{v,\varphi(\pi_{u^\bot}(x))}=\scpr{\varphi(u),\varphi(\pi_{u^\bot}(x))}=0.
    \end{equation}
    By definition, we have $\psi \circ S^\lambda_{K,u}=S^\lambda_{\varphi(K),v}\circ \varphi$, and since $\varphi$ and $\psi$ are affine bijections, it follows that $S^\lambda_{K,u}$ is affine if and only if $S^\lambda_{\varphi(K),v}$ is affine. But these statements are clearly equivalent to $s_{K,u}^\lambda$ being affine and  $s_{\varphi(K),v}^\lambda$ being affine, respectively.%
\end{proof}
Lemma~\ref{lemma:lambda_steiner_affine_equivalence_flatness} is now an easy consequence of Lemma~\ref{lemma:lambda_steiner_affine_transformation}.
\begin{proof}[Proof of Lemma~\ref{lemma:lambda_steiner_affine_equivalence_flatness}]
    By assumption, there exists an affine bijection $\varphi$ with $\Sulambda K=\varphi(K)$.
    Using the notation from \eqref{eq:induced_bijection_hat},  Lemma~\ref{lemma:lambda_steiner_affine_transformation} yields the existence of an affine automorphism $\psi$ of $\R^n$ with
    \begin{equation}
        S_u K=S_u \Sulambda K =S_{\hat\varphi(\hat\varphi^{-1}(u))} \varphi(K)=\psi S_{\hat\varphi^{-1}(u)}K \quad \text{and} \quad \hat\psi(\hat\varphi^{-1}(u))=\hat\varphi(\hat\varphi^{-1}(u))=u.
    \end{equation}
    Since $u$ is $\lambda$-flat for $\Sulambda K=\varphi(K)$, Lemma~\ref{lemma:lambda_steiner_affine_transformation} further implies that $\hat\varphi^{-1}(u)$ is $\lambda$-flat for $K$.
\end{proof}

Lemma~\ref{lemma:lambda_steiner_affine_equivalence_flatness} tells us precisely what we know about a convex body $K\in \Knn$ that is $\lambda$-invariant for a given $\lambda\in[0,\frac12]$. In the remainder of this section, we will use this information to show that in the planar case, no such body exists for $\lambda \in (0,\frac12)$. Our intermediate goal is to prove the following statement.%

\begin{proposition} \label{proposition:there_exists_a_circular_arc_of_flat_directions}
    Let $\lambda \in (0,\frac12]$ and let $K \in \Ktt$ be $\lambda$-invariant. Then there exists a non-empty, open circular arc $I \subset \sphereone$ such that every $v \in I$ is $\lambda$-flat for $K$.
\end{proposition}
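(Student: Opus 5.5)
The plan is to exploit Lemma~\ref{lemma:lambda_steiner_affine_equivalence_flatness} for \emph{every} direction at once. Let $F\subset\sphereone$ be the set of $\lambda$-flat directions for $K$; it is closed by Lemma~\ref{lemma:set_of_lambda_flat_directions_is_closed}. Since $K$ is $\lambda$-invariant, $\Sulambda K$ is affinely equivalent to $K$ for every $u\in\sphereone$. Lemma~\ref{lemma:lambda_steiner_affine_equivalence_flatness} then gives some $v\in F$ whose X-ray is affinely equivalent to the X-ray in direction $u$. I consider the relation
\begin{equation}
    R\coloneqq\{(u,v)\in\sphereone\times F \mid (S_uK,u) \text{ and } (S_vK,v) \text{ are affinely equivalent}\},
\end{equation}
whose first projection is all of $\sphereone$.

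\textbf{Step 1: $R$ is closed.} Normalize the affine maps $\varphi$ witnessing the equivalence, for instance by fixing centroids. Since $\varphi$ maps one X-ray body onto another, and both have fixed area and bounded diameter, the linear parts $\varphi$ and $\varphi^{-1}$ are uniformly bounded. Then $(u_i,v_i)\to(u,v)$ with witnesses $\varphi_i$ has a convergent subsequence $\varphi_{k(i)}\to\varphi$. Continuity of $u\mapsto S_uK$ (Lemma~\ref{lemma:steiner_operations_continuity}) shows that $\varphi$ witnesses $(u,v)\in R$, exactly as in the proof of Lemma~\ref{lemma:set_of_invariant_bodies_is_closed}.

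\textbf{Step 2: rigidity of fibres.} For fixed $v$, I want only finitely many $u$ with $(u,v)\in R$, unless every direction is $\lambda$-flat (in which case $I=\sphereone$ and we are done). My first attempt uses continuous affine invariants of X-rays, for example
\begin{equation}
    h(u)=\frac{\vol K}{\ell(u)\,w(K,u^\bot)},
\end{equation}
where $\ell(u)$ is the maximal chord length in direction $u$, together with higher normalized moments of the chord-length function. These are constant on the fibres of $R$. I would show that such an invariant can only be constant on a set of directions with an accumulation point if the X-rays near that point are all affinely equivalent. For $\lambda\in(0,\frac12]$, flatness of $v$ says that $f_{K,v}$ and $g_{K,v}$ are affinely related. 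I would use this to propagate the equivalence and conclude that $K$ has a full arc of flat directions anyway.

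This step is the one I expect to be the main obstacle. It is also where $\lambda>0$ must enter: for $\lambda=0$ and triangles, many directions share X-ray classes without being flat in an open set in the needed way. If full finiteness is out of reach, I would settle for the weaker property that each fibre $R^{-1}(v)$ is nowhere dense.

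\textbf{Step 3: extracting an arc.} Write $\sphereone=\bigcup_{N}\{u \mid u \text{ is related to at most } N \text{ points } v\in F\}$. If fibres over $u$ can be infinite, I would instead use the closed sets $F_j=\{u\mid (u,v)\in R \text{ for some } v \in F\cap J_j\}$, where the $J_j$ range over small closed arcs of a countable base. Baire's theorem yields an open arc $J\subset\sphereone$ and a small closed arc $J'$ such that every $u\in J$ is related to some $v\in F\cap J'$. Shrinking further by Baire and using the closedness of $R$, I aim to obtain a continuous branch $u\mapsto v(u)\in F$ on an open arc. By Step 2, this branch cannot be constant on any subarc. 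Hence its image is a nondegenerate connected subset of $\sphereone$ contained in $F$, and its interior is the required open arc $I$ of $\lambda$-flat directions.
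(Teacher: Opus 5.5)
There is a genuine gap, and it lies in Step~3 as much as in Step~2. Everything you establish or aim for is compatible with the set of second coordinates of $R$ containing no arc. That includes $R$ being closed, its first projection being all of $\sphereone$, and even the strongest form of Step~2, where each fibre $R^{-1}(v)$ is a single point.

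For a concrete example, take a Cantor set $C\subset\sphereone$ and a continuous surjection $c\colon C\to\sphereone$ (e.g.\ built from the Cantor function), and set $R=\{(c(v),v)\mid v\in C\}$. This relation is closed, projects onto $\sphereone$, and has singleton fibres over every $v$. Yet it admits no continuous branch $u\mapsto v(u)$ on any open arc: such a branch would be constant because $C$ is totally disconnected, and then a fibre would contain an arc. So Baire's theorem plus closedness of $R$ cannot produce the branch you need.

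What is missing is control of the \emph{other} fibres $R(u)=\{v\mid (u,v)\in R\}$. Your first decomposition in Step~3 tacitly assumes these are finite, and nothing in Steps~1--2 provides that. Step~2 is also unsupported as stated. Continuous invariants such as $h(u)$ or normalized moments of the chord-length function can be constant along a convergent sequence of directions without being locally constant. The proposed ``propagation'' via flatness of $v$ is never carried out.

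The missing ingredient is rigidity on the flat side, namely Lemma~\ref{lemma:two_lambda_flat_directions_affine_symmetry}: two $\lambda$-flat directions with affinely equivalent X-rays are mapped to each other by an affine symmetry of $K$. Put $K$ in John position. Then the affine symmetries form $G(K)\le\Orthogonal(2)$, which is finite unless $K$ is a disk (Lemma~\ref{lemma_infinite_symmetry_group_ball}); a disk is $\lambda$-invariant only for $\lambda=\frac12$. Hence each $R(u)$ is exactly one $G(K)$-orbit.

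Since $G(K)$ contains a reflection (Lemma~\ref{lemma:invariant_body_has_a_reflection_symmetry}), it is dihedral and has a closed fundamental arc $D$. Then $R\cap(D\times D)$ is the graph of a map $\phi\colon D\to D$. This graph is closed by your Step~1, so $\phi$ is continuous and $\phi(D)$ is a closed arc of $\lambda$-flat directions.

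Non-degeneracy then needs no fibre finiteness at all. If $\phi$ were constant, all X-rays of $K$ would be affinely equivalent, because every direction is a $G(K)$-image of a point of $D$. This contradicts Lemma~\ref{lemma:there_exist_two_non_equivalent_x_rays} for $\lambda\in(0,\frac12)$.

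The case $\lambda=\frac12$ must be treated separately, since all X-rays of an ellipse are equivalent. It is settled by Lemma~\ref{lemma:sigma_invariance_and_sigma_universality}: the disk is $S^{1/2}$-universal, so $K$ is an ellipse and every direction is $\frac12$-flat. This argument replaces your unproven dichotomy ``finite fibres unless every direction is flat''.
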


In order to prove Proposition~\ref{proposition:there_exists_a_circular_arc_of_flat_directions}, we study the \emph{affine symmetry group} of a $S^\lambda$-invariant body $K$, which consists of all affine automorphisms $\varphi \colon \R^n \rightarrow \R^n$ such that $\varphi(K)=K$ and whose group operation is given by function composition. It will be a bit more convenient to work with the Euclidean \emph{(full) symmetry group} $G(K)$ of $K$, which contains exactly the affine symmetries $\varphi$ that are isometries. If we assume that $K\in \Knn$ is in John position, i.e., that the unique ellipsoid of maximal volume inside $K$ is the Euclidean unit ball, then the affine symmetry group of $K$ coincides with $G(K)$ and $G(K)$ is a subgroup of $\Orthogonal(n)$. This follows from the fact that the John position is unique up to linear isometries. The uniqueness up to isometries can also be expressed by saying that there exists a unique symmetric positive definite matrix $A\in \R^{n \times n}$ and a unique $z \in \R^n$ such that $AK +z$ is in John position. We write $J(K)\coloneqq AK+z$ for this canonical choice of the John position and note that $K \mapsto J(K)$ is continuous and satisfies $J(OK)=O(JK)$ for $O\in \Orthogonal(n)$.

\begin{lemma} \label{lemma:invariant_body_has_a_reflection_symmetry}
    Let $\lambda \in [0,\frac12]$ and let $K \in \Ktt$ be $\lambda$-invariant. Then the affine symmetry group of $K$ contains an affine automorphism $\varphi$ with $\det \varphi=-1$. Equivalently, $K$ has a $\frac12$-flat direction.
\end{lemma}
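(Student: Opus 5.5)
The plan is a topological orientation argument on $\sphereone$. Suppose, for contradiction, that every affine symmetry of $K$ has determinant $+1$. Then for each $u \in \sphereone$ I define a sign $\varepsilon(u)\in\{\pm1\}$ as follows. By $\lambda$-invariance there is an affine automorphism $\varphi_u$ with $\Sulambda K=\varphi_u(K)$, and I set $\varepsilon(u)\coloneqq\operatorname{sign}\det\varphi_u$. This is well defined: if $\varphi_u'$ is another such map, then $\varphi_u^{-1}\circ\varphi_u'$ is an affine symmetry of $K$, so by assumption $\det\varphi_u$ and $\det\varphi_u'$ have the same sign.

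\textbf{Step 1: $\varepsilon(-u)=-\varepsilon(u)$.} Let $\rho$ be the reflection across $u^\bot$. As observed before Lemma~\ref{lemma:lambda_steiner_affine_equivalence_flatness}, $S^\lambda_{-u}K=\rho\, \Sulambda K=\rho\varphi_u(K)$. Hence $\rho\circ\varphi_u$ is an admissible choice for $\varphi_{-u}$, and it has determinant of the opposite sign.

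\textbf{Step 2: $\varepsilon$ is locally constant (the main obstacle).} Let $u_i\to u$. By Lemma~\ref{lemma:steiner_operations_continuity}, $S^\lambda_{u_i}K\to \Sulambda K$ in $\deltaId$, that is, $\varphi_{u_i}(K)\to\varphi_u(K)$ modulo translations.
\begin{itemize}
\item First I normalize the maps. I write $\varphi_{u_i}(K)=A_iK+z_i$. The operator norms $\norm{A_i}$ and $\norm{A_i^{-1}}$ are bounded, because the bodies $A_iK$ have bounded diameter and volume bounded away from zero. This is the same compactness argument as in the proof of Lemma~\ref{lemma:set_of_invariant_bodies_is_closed}.
\item Passing to a subsequence, $A_i\to A^*\in\GL(2,\R)$ with $A^*K$ a translate of $\varphi_u(K)$. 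Therefore $\operatorname{sign}\det A^*=\varepsilon(u)$.
\item Since $\det$ is continuous and $\det A^*\neq 0$, we get $\operatorname{sign}\det A_i=\varepsilon(u)$ for all large $i$.
\item Applying this to every subsequence shows that $\varepsilon(u_i)=\varepsilon(u)$ eventually.
\end{itemize}
Consequently $\varepsilon$ is continuous as a map to $\{\pm1\}$. Since $\sphereone$ is connected, $\varepsilon$ is constant, which contradicts Step 1. This proves that the affine symmetry group of $K$ contains some $\varphi$ with $\det\varphi=-1$.

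\textbf{Step 3: the equivalence with $\frac12$-flat directions.} I move $K$ to John position $J(K)$; by Lemma~\ref{lemma:lambda_steiner_affine_transformation}, flatness of directions is transported consistently under this affine change.
\begin{itemize}
\item \emph{From a symmetry to a flat direction.} In John position the affine symmetry group is a compact subgroup of $\Orthogonal(2)$. An orientation-reversing element of $\Orthogonal(2)$ is a reflection across a line, say $u^\bot$. Then every chord of $J(K)$ in direction $u$ is bisected by $u^\bot$, so $s^{1/2}_{J(K),u}$ vanishes identically and is in particular affine. Hence $u$ is $\frac12$-flat for $J(K)$, and by Lemma~\ref{lemma:lambda_steiner_affine_transformation} the corresponding direction is $\frac12$-flat for $K$.
\item \emph{From a flat direction to a symmetry.} If $u$ is $\frac12$-flat for $K$, the midpoints of the chords in direction $u$ lie on a line $\ell$. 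The affine skew reflection that fixes $\ell$ pointwise and maps $u$ to $-u$ preserves $K$ and has determinant $-1$.
\end{itemize}
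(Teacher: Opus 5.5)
Your proof is correct and is essentially the paper's argument. The paper first puts $K$ in John position so that the witnesses lie in the compact group $\Orthogonal(2)$. It then shows that the sets $S^+$ and $S^-$ of directions with orientation-preserving and orientation-reversing witnesses are closed, cover $\sphereone$, and are both nonempty (via the mirror relation between $S^\lambda_u K$ and $S^\lambda_{-u}K$), and it extracts a reflection symmetry from a point of $S^+\cap S^-$. This is exactly your sign-function contradiction, except that you handle compactness through bounded operator norms in $\GL(2,\R)$ rather than John position. Your Step~3 also proves the converse half of the ``equivalently'' clause, which the paper leaves implicit.
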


\begin{proof}
    By Lemma~\ref{lemma:lambda_steiner_affine_transformation}, the property of $\lambda$-invariance is affinely invariant. Since the claim of the lemma is also affinely invariant, we can assume without loss of generality that $K$ is in John position.
    
    We consider the map $\sphereone \rightarrow \Ktt$ given by $u \mapsto K_u \coloneqq J(\Sulambda K)$ and set
    \begin{equation}
        S^+ \coloneqq \{u \in \sphereone \mid \exists O_u \in \SO(2): K_u=O_u(K) \}
    \end{equation}
    and
    \begin{equation}
        S^- \coloneqq \{u \in \sphereone \mid \exists O_u \in \Orthogonal(2) \setminus \SO(2): K_u=O_u(K) \}.
    \end{equation}
    Since $K$ is $\lambda$-invariant, for every $u \in \sphereone$ there exists an affine bijection $\varphi$  with $K_u =\varphi(K)$. But since both $K$ and $K_u$ are in John position, we have $\varphi \in \Orthogonal(2)$. This shows that $\sphereone=S^+ \cup S^-$. We claim that both $S^+$ and $S^-$ are closed. Let $(u_i)_{i \in \N}$ be a convergent sequence in $S^+$, converging to $u\in \sphereone$. By definition, there exists a sequence $(O_i)_{i \in \N}$ of matrices in $\SO(2)$ with $K_{u_i}=O_i(K)$ for $i \in \N$. Since $\SO(2)$ is compact, $(O_i)_{i \in \N}$ contains a convergent subsequence, whose limit $O \in \SO(2)$ satisfies $K_u=O(K)$. The claim that $S^-$ is closed is shown similarly. Let $u \in \sphereone$ be arbitrary. Since $K_u$ and $K_{-u}$ are mirror images across the hyperplane $u^\bot$, both $S^+$ and $S^-$ are non-empty. Because $\sphereone$ is connected, it follows that there exists a direction $u \in S^+ \cap S^-$. Hence, there exist $O^+ \in \SO(2)$ and $O^- \in \Orthogonal(2) \setminus \SO(2)$ with $O^+(K)=K_u = O^-(K)$. It follows that the reflection $(O^+)^{-1}O^-$ is a symmetry of $K$ and that both directions orthogonal to its axis of reflection are $\frac12$-flat for $K$.
\end{proof}

We already know that a $\lambda$-invariant body $K$ has at least one $\lambda$-flat direction; the following lemma implies that $K$ has at least two distinct $\lambda$-flat directions if $\lambda\in(0,\frac12)$.

\begin{lemma} \label{lemma:there_exist_two_non_equivalent_x_rays}
    Let $K\in\Ktt$ be $\lambda$-invariant for $\lambda \in (0,\frac12)$. Then $K$ has two X-rays that are not affinely equivalent.
\end{lemma}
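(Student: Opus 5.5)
We argue by contradiction: suppose all X-rays of $K$ are pairwise affinely equivalent. Since $\lambda$-invariance and the conclusion are both affinely invariant (Lemma~\ref{lemma:lambda_steiner_affine_transformation}), we may place $K$ in any convenient affine position. The plan is to show that this assumption forces $K$ to be an ellipse, and then to check that an ellipse is not $\lambda$-invariant for $\lambda\in(0,\frac12)$.

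\textbf{Step 1: reduction to an ellipse.} Assume all X-rays are affinely equivalent. By Lemma~\ref{lemma:invariant_body_has_a_reflection_symmetry}, $K$ has a $\frac12$-flat direction $u_0$, so $S_{u_0}K$ is affinely equivalent to $K$. Hence for every $u$ the Steiner symmetral $S_uK$ is affinely equivalent to $S_{u_0}K$, and therefore to $K$. Thus $K$ is $\frac12$-invariant, at least for a single Steiner step, and iterating this gives full $\frac12$-invariance modulo $\GL(2,\R)$. Since the Euclidean ball is $S^{\frac12}$-universal, Lemma~\ref{lemma:sigma_invariance_and_sigma_universality} with $G=\GL(2,\R)$ shows that $K$ is an ellipse.

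\textbf{Step 2: ellipses are not $\lambda$-invariant for $\lambda\in(0,\frac12)$.} Take $K$ to be the unit disk and fix $u$. The X-ray is $2\sqrt{1-t^2}$, and $S^\lambda_u K$ has lower boundary $-(1-2\lambda)\sqrt{1-t^2}$ and upper boundary $(1-2\lambda)\cdot$ wait—more precisely, the chord over $t$ is placed with lower endpoint at $-2\lambda\sqrt{1-t^2}\cdot$(appropriate sign convention) and upper endpoint at $2(1-\lambda)\sqrt{1-t^2}$. So $S^\lambda_uK$ is the union of two half-ellipses with semi-axes $2\lambda$ and $2(1-\lambda)$ glued along the segment $[-1,1]\times\{0\}$. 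For $\lambda\neq\frac12$ this set is not an ellipse: its boundary is $C^1$ at $(\pm1,0)$, but the curvatures there differ, $2\lambda$ versus $2(1-\lambda)$ up to a common factor. More robustly, an ellipse is centrally symmetric, while this body is centrally symmetric only if $\lambda=1-\lambda$. So the disk is not $\lambda$-invariant, a contradiction.

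\textbf{Main obstacle.} The delicate point is Step 1: passing from ``some $\frac12$-flat direction $u_0$ plus pairwise affinely equivalent X-rays'' to ``$S_{\mathbf U}K\in[K]_{\GL}$ for all words $\mathbf U$''. After one step, $S_uK$ need not have the same property as $K$. I would handle this by noting that $S_uK=\psi(K)$ for an affine $\psi$, so all X-rays of $S_uK$ are again pairwise affinely equivalent, and $S_uK$ has the $\frac12$-flat direction $\hat\psi(u_0)$ by Lemma~\ref{lemma:lambda_steiner_affine_transformation}. The argument then applies inductively along any word. I need to verify that X-rays transform equivariantly under affine maps; this should follow from Lemma~\ref{lemma:lambda_steiner_affine_transformation} with $\lambda=\frac12$.
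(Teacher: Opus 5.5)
Your proof is correct, but it takes a genuinely different route from the paper's.

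\textbf{The paper's route.} The paper works with a single concave function. It uses a $\lambda$-flat direction $u$ (from Lemma~\ref{lemma:lambda_steiner_affine_equivalence_flatness}) and a $\frac12$-flat direction $\tilde u$. It takes the X-rays in the directions parallel to the sections $H^\lambda_uK$ and $H^{1/2}_{\tilde u}K$. The first has the skew symmetry $h(x)=h(1-\frac{1-\lambda}{\lambda}x)$, and the second has an ordinary reflection symmetry. The assumed affine equivalence transfers the reflection symmetry to $h$. Concavity then forces $h$ to be constant when $\lambda\neq\frac12$, so $K$ would be a parallelogram, which obviously has non-equivalent X-rays.

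\textbf{Your route.} You never use a $\lambda$-flat direction. You compare every X-ray only with the one in the $\frac12$-flat direction, which gives $S_uK\in[K]_{\GL(2,\R)}$ for all $u$. You then propagate this along words. The equivariance $S_{\hat\varphi(w)}\varphi(K)\in[S_wK]_{\GL(2,\R)}$ from Lemma~\ref{lemma:lambda_steiner_affine_transformation} with $\lambda=\frac12$ makes that induction immediate. Finally you use the $S^{1/2}$-universality of the disk together with Lemma~\ref{lemma:sigma_invariance_and_sigma_universality} to get an ellipse. This last step is exactly how the paper handles $\lambda=\frac12$ in the proof of Proposition~\ref{proposition:there_exists_a_circular_arc_of_flat_directions}.

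\textbf{Trade-offs.} Your argument is more conceptual and reuses the paper's abstract machinery. It also works for $\lambda=0$, where the paper's division by $\lambda$ fails. The price is reliance on the nontrivial universality theorem for Steiner symmetrization. The paper's argument is elementary and self-contained.

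\textbf{Step~2.} Keep the central-symmetry argument, but add its one-line justification. A center would have to lie at height $1-2\lambda$, midway between the horizontal support lines. A longest horizontal chord of a centrally symmetric body passes through its center, but here the longest horizontal chord is unique and lies at height $0$.

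Your curvature remark reaches the right conclusion with the wrong numbers. The one-sided curvatures at $(\pm1,0)$ are $1/(4\lambda^2)$ and $1/(4(1-\lambda)^2)$.
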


\begin{proof}
    To show the contrapositive, we assume that any two X-rays of $K$ are affinely equivalent. Because $K$ is $\lambda$-invariant, there exists a $\lambda$-flat direction $u$. We also know from Lemma~\ref{lemma:invariant_body_has_a_reflection_symmetry} that there exists a $\frac12$-flat direction $\tilde u$. We recall the definition of the $\lambda$-section from \eqref{equation:definition_lambda_section}. Let $v$ and $\tilde v$ be two directions which are parallel to the line segments $H_u^\lambda K$ and $H_{\tilde u}^{1/2} K $, respectively.  %
    There are exactly two affine bijections $[0,1] \rightarrow \pi_{v^\bot}(K)$; let $\gamma$ be the one that satisfies $\gamma(\lambda)\in \pi_{v^\bot}(H_u^\lambda K)$. We consider the function
    \begin{equation}
        h\coloneqq|f_{K,v}+g_{K,v}| \circ \gamma \colon [0,1] \rightarrow \R.
    \end{equation}
    Because $u$ is $\lambda$-flat and $v$ is parallel to $H_u^\lambda K$, $h$ exhibits the symmetry $h(x)=h\left(1-\frac{1-\lambda}{\lambda}\cdot x\right)$ for all $x\in [0,\lambda]$. Choosing $\tilde \gamma$ to be any of the two affine bijections $[0,1] \rightarrow \pi_{\tilde v^\bot}(K)$ and setting $\tilde h\coloneqq|f_{K,\tilde v}+g_{K,\tilde v}| \circ \tilde\gamma$, we similarly get $\tilde h(x)=\tilde h\left(1- x\right)$ for all $x\in [0,1]$. Now the assumption that any two X-rays are affinely equivalent implies that $h=c\tilde h$ for some $c>0$, and we conclude that $ h(x)= h\left(1- x\right)$ for all $x\in [0,1]$.
    Because $h$ is concave, this implies that $h$ attains global maxima at $\frac12$ and $\lambda$. Let $I \coloneqq h^{-1}(h(\lambda))$ be the interval of all global maximizers and set $a \coloneqq \inf I \leq \lambda$ and $b \coloneqq \sup I\geq \frac12$. Due to the symmetries of $h$, we have
    \begin{equation}
        a=1-b \quad \text{and} \quad a= \frac{\lambda}{1-\lambda}(1-b).
    \end{equation}
    This implies $\lambda=\frac12$ or $b=1$ (and hence $a=0$). Since $\lambda \neq \frac12$, we get that $h$ is constant, which implies that $K$ is a parallelogram. But a parallelogram clearly does not have the property that any two X-rays are affinely equivalent, so we conclude that $\lambda=\frac12$.
\end{proof}

On the other hand, if two distinct $\lambda$-flat directions of $K \in \Knn$ have affinely equivalent X-rays, we obtain an affine symmetry of $K$.

\begin{lemma} \label{lemma:two_lambda_flat_directions_affine_symmetry}
    Let $K \in \Knn$ and $\lambda \in [0,\frac12]$. If the directions $u,v\in\sphere$ are $\lambda$-flat for $K$ and the X-rays of $K$ in the directions $u$ and $v$ are affinely equivalent, then there exists an affine symmetry $\psi$ of $K$ with $\hat{\psi}(u)=v$.
\end{lemma}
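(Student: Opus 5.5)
The idea is to compose the pointwise $\lambda$-Steiner symmetrizations in directions $u$ and $v$ with the affine map that witnesses the equivalence of the X-rays. The key observation is that a $\lambda$-Steiner symmetral is determined by the X-ray alone. Definition~\ref{definition:lambda_steiner_symmetrization} fixes $\Sulambda K$ through $\pi_{u^\bot}(K)$ and $f_{K,u}+g_{K,u}$, and these are exactly the data of the Steiner symmetral $S_uK$. Hence $\Sulambda K=S^\lambda_u(S_uK)$, and likewise in direction $v$.

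Let $\varphi$ be an affine automorphism with $S_vK=\varphi(S_uK)$ and $\hat\varphi(u)=v$. Apply Lemma~\ref{lemma:lambda_steiner_affine_transformation} to the body $S_uK$, the direction $u$ and the map $\varphi$. It gives an affine automorphism $\psi_1$ with
\begin{equation}
\psi_1(S^\lambda_u S_uK)=S^\lambda_{\hat\varphi(u)}\varphi(S_uK)=S^\lambda_v S_vK,
\end{equation}
that is, $\psi_1(\Sulambda K)=S^\lambda_vK$, and moreover $\hat\psi_1(u)=v$.

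Because $u$ is $\lambda$-flat for $K$, the pointwise map $S^\lambda_{K,u}\colon K\to\Sulambda K$ is affine. It is a shear along $u$, so it extends to an affine automorphism $\alpha_u$ of $\R^n$. This extension fixes the direction $u$, i.e.\ $\hat\alpha_u(u)=u$, since it moves points only by multiples of $u$ via an affine function. In the same way $v$ being $\lambda$-flat gives an affine automorphism $\alpha_v$ with $\alpha_v(K)=S^\lambda_vK$ and $\hat\alpha_v(v)=v$.

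Set $\psi\coloneqq \alpha_v^{-1}\circ\psi_1\circ\alpha_u$. Then
\begin{equation}
\psi(K)=\alpha_v^{-1}(\psi_1(\Sulambda K))=\alpha_v^{-1}(S^\lambda_vK)=K.
\end{equation}
For the direction, the maps $\hat{\cdot}$ compose because direction maps of affine maps are the normalized linear parts. This gives $\hat\psi(u)=\hat\alpha_v^{-1}(\hat\psi_1(\hat\alpha_u(u)))=\hat\alpha_v^{-1}(v)=v$.

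There is one subtle point to check: that $\hat{\cdot}$ is compatible with composition, including orientation of directions. If $\varphi$ maps $u$ to $v$ with a positive scalar, every step preserves positive multiples, so this is routine. A second routine check is that the identity $\Sulambda K=S^\lambda_u(S_uK)$ holds up to translation. Any translations that appear can be absorbed into $\psi_1$, since $K$ is only considered up to translation.
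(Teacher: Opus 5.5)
Your proposal is correct and follows the paper's proof essentially step for step. The paper also applies Lemma~\ref{lemma:lambda_steiner_affine_transformation} to $S_uK$ with the witnessing map $\varphi$, uses $S^\lambda_u S_uK=S^\lambda_uK$, and conjugates by the affine maps $S^\lambda_{K,u}$ and $(S^\lambda_{K,v})^{-1}$. Your explicit checks that $\hat{\cdot}$ respects composition and that these shears fix $u$ and $v$ fill in details the paper leaves implicit. The identity $S^\lambda_u S_uK=S^\lambda_uK$ holds exactly, so no translations need to be absorbed.
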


\begin{proof}
    By assumption, there exists an affine automorphism $\varphi$ of $\R^n$ with $\varphi(S_uK)=S_vK$ and $\hat \varphi(u)=v$. By Lemma~\ref{lemma:lambda_steiner_affine_transformation}, the map
    \begin{equation}
        S^\lambda_{\varphi(S_uK),v}\circ \varphi\circ (S^\lambda_{S_uK,u})^{-1}\colon  \Sulambda S_u K \rightarrow S_v^\lambda \varphi(S_uK)
    \end{equation}
    extends to an affine automorphism $\psi_0$ of $\R^n$, which satisfies $\hat \psi_0(u)=\hat \varphi(u)=v$ and $\psi_0(\Sulambda S_uK)=S^\lambda_v\varphi(S_uK)$. This yields
    \begin{equation}
        \psi_0(\Sulambda K)=\psi_0(\Sulambda S_uK)=S_v^\lambda(\varphi(S_uK))=S_v^\lambda S_vK=S_v^\lambda K.
    \end{equation}
    Since $u$ and $v$ are $\lambda$-flat for $K$, $\psi\coloneqq (S_{K,v}^\lambda)^{-1} \circ \psi_0\circ S_{K,u}^\lambda$ is affine and has the desired properties.
\end{proof}

We also need the following elementary lemma, whose proof is included for the sake of completeness.

\begin{lemma} \label{lemma_infinite_symmetry_group_ball}
    Let $K \in \Ktt$. If the symmetry group $G(K)$ of $K$ is infinite, then $K$ is a disk.
\end{lemma}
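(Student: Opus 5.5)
We prove that if $G(K)$ is infinite, then $K$ is invariant under every rotation about a fixed centre, and hence is a disk.

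First, every isometry $\varphi\in G(K)$ maps $K$ onto itself, so it fixes the centroid $c$ of $K$. After translating we may assume $c=0$, so that $G(K)$ is a subgroup of $\Orthogonal(2)$. Next, $G(K)$ is closed in $\Orthogonal(2)$: if $O_i\to O$ with $O_iK=K$ for all $i$, then $OK=K$ by continuity. Hence $G(K)$ is a compact subgroup.

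Now pass to rotations. The subgroup $H\coloneqq G(K)\cap\SO(2)$ has index at most $2$ in $G(K)$, so $H$ is also infinite. Identify $\SO(2)$ with the circle group $\R/2\pi\mathbb Z$ via rotation angles. Every proper closed subgroup of the circle group is finite: it is the cyclic group generated by the rotation through $2\pi/m$ for some $m$. Since $H$ is closed and infinite, this forces $H=\SO(2)$. We can also see this directly. An infinite subgroup of the compact circle group contains rotations by arbitrarily small nonzero angles, by pigeonhole on the angles. Such rotations generate a dense subgroup, and because $H$ is closed, $H=\SO(2)$.

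Finally, $K$ is invariant under all rotations about $0$. Pick a point $x\in\bd K$ with $r\coloneqq\norm{x}$ maximal. Every rotation of $x$ lies in $K$, so the circle of radius $r$ is contained in $K$. Convexity then gives $rB\subset K$. Conversely, $K\subset rB$ by maximality of $r$. Therefore $K=rB$, a disk.

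The only step that needs care is the classification of closed subgroups of $\SO(2)$. It follows from the standard argument: the smallest positive angle in $H$ either exists, in which case $H$ is cyclic and finite, or does not, in which case $H$ is dense and hence equal to $\SO(2)$.
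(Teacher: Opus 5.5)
Your proposal is correct and follows essentially the same route as the paper: center $K$ so that $G(K)\leq\Orthogonal(2)$, note that the rotation subgroup $G(K)\cap\SO(2)$ is still infinite, and use that an infinite closed subgroup of $\SO(2)$ contains arbitrarily small rotations and hence is all of $\SO(2)$. You additionally justify two steps the paper leaves implicit, namely that $G(K)$ is closed and that invariance under all rotations about the centroid forces $K$ to be a disk.
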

\begin{proof}
    Without loss of generality, we assume that $K$ is centered, which ensures that $G(K)\leq\Orthogonal(2)$. If $G(K) \setminus \SO(2)$ is non-empty, then it has the same cardinality as $G(K) \cap \SO(2)$, so $G(K) \cap \SO(2)$ will always be infinite. Because $\SO(2)$ is compact, the subgroup $G(K) \cap \SO(2)$ cannot be discrete and hence contains rotations by arbitrarily small positive angles. Because $G(K) \cap \SO(2)$ is closed, we have $G(K) \supset \SO(2)$, which implies the claim.
\end{proof}

We are now ready to prove Proposition~\ref{proposition:there_exists_a_circular_arc_of_flat_directions}. 
\begin{proof}[Proof of Proposition~\ref{proposition:there_exists_a_circular_arc_of_flat_directions}]
    By Lemma~\ref{lemma:lambda_steiner_affine_transformation}, we can assume without loss of generality that $K$ is in John position.    If $G(K)$ is infinite, then  Lemma~\ref{lemma_infinite_symmetry_group_ball} implies that $K$ is a disk. Since the disk is only $\lambda$-invariant for $\lambda=\frac12$, we have $\lambda=\frac12$ and the claim is trivial. On the other hand, if $\lambda=\frac12$, then Lemma~\ref{lemma:sigma_invariance_and_sigma_universality} implies that $K$ is an ellipse. Because $K$ is in John position, it is a disk and the claim is trivial.
    
    It remains to consider the case where $G(K)$ is finite and $\lambda \in (0,\frac12)$. As a finite subgroup of $\Orthogonal(2)$, $G(K)$ has the structure of a dihedral or cyclic group of order $m \geq 1$. By Lemma~\ref{lemma:invariant_body_has_a_reflection_symmetry}, we know that $G(K)$ contains at least one reflection. So $G(K)$ is actually a dihedral group of order $m \geq 2$.

    The group action of $G(K)$ partitions the set $\sphereone$ into orbits, i.e., equivalence classes of the form
    \begin{equation}
        [u] \coloneqq \{Ou \mid O \in G(K)\}.
    \end{equation}
    A \emph{fundamental domain} is a subset of $\sphereone$ that contains exactly one representative of every orbit. In the case of the dihedral group of order $m\geq 2$, a fundamental domain is given by any inclusion-wise maximal circular arc $F \subset \sphereone$ whose interior does not contain fixed points of reflections in the group. Such a circular arc is closed, has arc length $\frac{2\pi}{m}$, and its boundary points are fixed points of reflections. Let $F$ be such a fundamental domain of $G(K)$.

    Since $K$ is $\lambda$-invariant, Lemma~\ref{lemma:lambda_steiner_affine_equivalence_flatness} asserts that for every $u \in F$ there exists a $\lambda$-flat direction $\phi(u) \in F$ whose X-ray is affinely equivalent to the X-ray in direction $u$. %
    We claim that any map $\phi \colon F\rightarrow F$ defined in this way must be continuous. For this, we consider a sequence $(v_i)_{i \in \N}$ in $F$ that converges to $v$. Let $w \in F$ be an accumulation point of $\{\phi(v_i)\mid i \in \N\}$. By Lemma~\ref{lemma:set_of_lambda_flat_directions_is_closed}, $w$ is $\lambda$-flat. Moreover, we claim that the X-ray of $K$ in direction $w$ is affinely equivalent to that in direction $\phi(v)$. Up to choosing a subsequence, we can assume that $\phi(v_i)\goestoinfty{i} w$. By the definition of $\phi$, there exist two sequences $A_1,A_2,\dots\in \GL(2,\R)$ and $z_1,z_2,\dots \in \R^2$ with $A_iS_{v_{i}}K+z_i=S_{\phi(v_{i})}K$ and $\phi(v_i)=\frac{A_iv_i}{\norm{A_iv_i}}$ for $i\in\N$. Since $(v_{i})_{i \in \N}$ and $(\phi(v_{i}))_{i \in \N}$ are convergent and Steiner symmetrization is continuous, the norms $\norm{A_i}$, $\norm{A_i^{-1}}$ and $\norm{z_i}$ are uniformly bounded in $i \in \N$, so we can find a subsequence with $A_{k(i)}\goestoinfty{i}A^*$ and $z_{k(i)}\goestoinfty{i}z^*$. Then we have $A^*S_vK+z^*=S_wK$ and $w=\frac{A^*v}{\norm{A^*v}}$, i.e., the X-rays of $K$ in the directions $v$ and $w$ are affinely equivalent. Moreover, by the definition of $\phi$, the X-ray in direction $\phi(v)$ is equivalent to that in direction $v$ and hence to that in direction $w$. By Lemma~\ref{lemma:two_lambda_flat_directions_affine_symmetry}, there exists an affine symmetry $\psi$ of $K$ that maps $\phi(v)$ to $w$.
    Since $K$ is in John position, we have $\psi \in G(K)$. This shows that $\phi(v),w \in F$ are in the same orbit and because $F$ is a fundamental domain, we have $\phi(v)=w$. Since $w$ was an arbitrary accumulation point of $\{\phi(v_i)\mid i \in \N\}$, it follows that $\phi$ is continuous.

    We have shown that there exists a continuous map $\phi \colon F \rightarrow F$ whose image contains only $\lambda$-flat directions. Since $\phi$ is continuous, $\phi(F)$ is a closed circular arc. Using that $\lambda \in (0,\frac12)$, Lemma~\ref{lemma:there_exist_two_non_equivalent_x_rays} implies that $\phi(F)$ has cardinality at least two. This shows that the relative interior of $\phi(F)$ is an open circular arc with the desired properties.
\end{proof}

At this point, we still have basically no information about the boundary structure of a $\lambda$-invariant body $K \in \Ktt$, and there is a priori no reason why it might not be highly irregular. In the range $\lambda\in(0,\frac12]$, the utility of Proposition~\ref{proposition:there_exists_a_circular_arc_of_flat_directions} is that it forces the boundary of $K$ to be highly regular, namely $C^2_+$. This will allow us to analyze the curvature of $K$ using a second-order differential equation. Before we proceed, we briefly recall the relevant basic notions from convex geometry (see \cite[Ch.~2]{sch13}).

\begin{definition}
    Let $K \in \Ktt$ and let $x \in \bd K$ be a \emph{regular} point of $K$, i.e., there exists a unique outer unit normal vector of $K$ at $x$, which we denote by $n(x)$. The regular boundary point $x$ is called \emph{normal} if $g_{K,n(x)}$ is twice differentiable at $\pi_{n(x)^\bot}(x)$ in the sense that there exists a second-order Taylor expansion. In this case, the (principal) curvature of $K$ at $x$ is given by
    \begin{equation} \label{equation:formula_for_kappa}
        \kappa(K,x) = g_{K,n(x)}''(\pi_{n(x)^\bot}(x)) \geq 0,
    \end{equation}
    where $g_{K,n(x)}''$ is to be interpreted as in Remark~\ref{remark:meaning_of_g''} below.
\end{definition}

\begin{remark} \label{remark:meaning_of_g''}
    Let $u \in \sphereone$. In order to compute first and second derivatives of $f_{K,u}$ and $g_{K,u}$, we choose a vector $v \in \sphereone \cap u^\bot$, say, the one which results from $u$ by a 90° clockwise rotation, and identify $f_{K,u}$ and $g_{K,u}$ with the functions given by
    \begin{equation}
        t \mapsto f_{K,u}(t\cdot v) \quad \text{and} \quad t \mapsto g_{K,u}(t\cdot v)
    \end{equation}
    on suitable intervals. This allows us to identify $f'_{K,u},g'_{K,u},f''_{K,u},g''_{K,u}$ with scalars. Because the calculations below do not depend on the chosen orientation, the choice of $v \in \sphereone \cap u^\bot$ has only been suggested for the sake of concreteness, but is irrelevant for our purposes.
\end{remark}

Although the boundary of an arbitrary convex body can be highly irregular, the set of boundary points that are not normal always has $(n-1)$-dimensional Hausdorff measure zero. In dimension $n\leq 3$, this was shown by Busemann--Feller \cite{bf36}; the general case is due to Aleksandrov \cite{ale39}. 

For $\lambda$-invariant bodies in the setting $\lambda\in(0,\frac12]$, Proposition~\ref{proposition:there_exists_a_circular_arc_of_flat_directions} allows us to transfer this knowledge about regularity to all boundary points of $K$, yielding the following result.

\begin{lemma} \label{lemma:boundary_is_C2}
    Let $K \in \Ktt$ be $\lambda$-invariant for $\lambda \in (0,\frac12]$. %
    Then $K$ has a $C^2_+$ boundary, i.e., $\bd K$ is $C^2$ and we have $\kappa(K,x)>0$ for all $x \in \bd K$.
\end{lemma}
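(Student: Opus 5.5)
The plan is to use the open arc $I$ of $\lambda$-flat directions from Proposition~\ref{proposition:there_exists_a_circular_arc_of_flat_directions}. For each $v\in I$, the map $S^\lambda_{K,v}$ is an affine bijection $K\to S^\lambda_vK$. Moreover, since $K$ is $\lambda$-invariant, each $S^\lambda_vK$ is an affine image of $K$. We therefore get a family of affine self-maps of the class $[K]_{\GL(2,\R)}$ depending continuously on $v$. The idea is to transport regularity of one boundary point to a neighbourhood of boundary points. The steps are as follows.

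\textbf{Step 1: relate the boundary of $K$ to itself near a normal point.} Since $v$ is $\lambda$-flat, $H^\lambda_vK$ is a chord of $K$. The map $S^\lambda_{K,v}$ fixes this chord and shears the two halves of $K$ along $v$. Write $S^\lambda_vK=\varphi_v(K)$ with $\varphi_v$ affine, and set $\Phi_v\coloneqq\varphi_v^{-1}\circ S^\lambda_{K,v}$. This is an affine automorphism of $\R^2$ mapping $K$ onto $K$. By the John-position normalization used in the proof of Proposition~\ref{proposition:there_exists_a_circular_arc_of_flat_directions}, $\Phi_v$ is then an element of the finite group $G(K)$. I do not expect this to work directly: it would give finitely many affine maps, which is too rigid.

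\textbf{Step 2: compare two nearby flat directions instead.} The better route is to use the defining property pointwise. For $v\in I$, the affineness of $s^\lambda_{K,v}$ means
\begin{equation}
(1-\lambda)f_{K,v}-\lambda g_{K,v}\ \text{is affine on } \pi_{v^\bot}(K).
\end{equation}
Hence $f_{K,v}$ and $\frac{\lambda}{1-\lambda}g_{K,v}$ differ by an affine function. This is a reflection-like relation between the two arcs of $\bd K$ cut off by the chord $H^\lambda_vK$, realised by an affine map $\rho_v$, an oblique ``reflection'' with ratio $\frac{\lambda}{1-\lambda}$ along $v$. The map $\rho_v$ maps one boundary arc onto the other and depends continuously on $v\in I$.

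\textbf{Step 3: propagate regularity.} Regularity, normality, existence of a second-order Taylor expansion, and positivity of curvature are preserved under affine maps. The exception is where the parametrisation degenerates, namely at the endpoints of the chord and where $\bd K$ contains segments parallel to $v$. By Busemann--Feller/Aleksandrov, normal boundary points are dense. Using a one-parameter family $\rho_v$, $v\in I$, we would show that compositions $\rho_{v}\circ\rho_{w}$ (with $\lambda<\frac12$ the ratios do not cancel, giving a genuine affine map) act locally transitively on $\bd K$. Concretely, the orbit of any point under the pseudogroup generated by $\{\rho_v\}_{v\in I}$ contains an open arc of $\bd K$. Thus every point is an affine image of a normal point with positive curvature. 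Positivity of curvature also needs care: curvature zero on a set of positive measure would mean $\bd K$ contains a segment, and segments are excluded because a segment would be mapped by the $\rho_v$ to a continuum of non-parallel segments. Zero curvature at isolated normal points should be handled by transporting a point of positive curvature there. Uniform $C^2$ then follows: $\kappa$ is defined everywhere, and continuity comes from continuous dependence of $\rho_v$ on $v$.

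\textbf{Main obstacle.} I expect the main obstacle to be Step 3's transitivity claim. It requires controlling how the chord endpoints move as $v$ ranges over $I$, and showing that pointwise second-order Taylor expansions transported along an affine family yield a genuinely $C^2$ boundary rather than merely everywhere-twice-differentiable. My first attempt would be to parametrise $\bd K$ near a chord endpoint and write the relation of Step 2 as a functional equation $f_{K,v}=\frac{\lambda}{1-\lambda}g_{K,v}+a_v$, then differentiate in $v$ to bootstrap regularity.
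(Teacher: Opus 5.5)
Your transport mechanism is the right one and is the paper's. For $v\in I$, affineness of $(1-\lambda)f_{K,v}-\lambda g_{K,v}$ transfers regularity, normality and vanishing of curvature between the two endpoints of a chord in direction $v$. Since the endpoints opposite a point $x$ sweep an open arc as $v$ varies, density of normal points lets you choose $v$ so that the opposite endpoint is normal. No compositions $\rho_v\circ\rho_w$ are needed.

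\textbf{First gap: the exceptional points.} Let $S$ be the set of boundary points $x$ such that $\pi_{v^\bot}(x)$ is an endpoint of $\pi_{v^\bot}(K)$ for \emph{every} $v\in I$. For such $x$ the chords through $x$ in the directions of $I$ degenerate, $x$ is (generically) fixed by $\rho_v$, and no opposite endpoint sweeps an arc. So your transitivity claim fails there, and nothing can be transported to $x$. These points are exactly the possible corners of $K$ (there are at most two), and the arc $I$ gives no local information at them. You flag chord endpoints but never exclude them. Without this you do not even get that $\bd K$ is $C^1$, and $C^1$ is needed for the slope factor in the curvature formula to vary continuously.

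The paper closes this gap with a global argument that uses $\lambda$-invariance once more. The transport step shows that all non-regular points lie in $S$, so there are at most two. Suppose $x\in S$ and pick $u$ with $\pi_{u^\bot}(x)\in\relint\pi_{u^\bot}(K)$. Then $f_{K,u}+g_{K,u}$ has a kink at $\pi_{u^\bot}(x)$, so $S^\lambda_uK$ has two corners on that chord. For $\tilde u\neq u$ close to $u$, the body $S^\lambda_{\tilde u}S^\lambda_uK$ then has at least four corners. This is impossible: that body is affinely equivalent to $K$, and the number of non-regular boundary points is an affine invariant.

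\textbf{Second gap: positivity of $\kappa$, and your ``main obstacle''.} Vanishing of $\kappa$ on a set of positive measure does not force a segment; a $C^2$ convex curve can have $\kappa$ vanishing exactly on a fat Cantor set. Nor does a boundary segment $\sigma$ produce a continuum of non-parallel segments. For $v$ near $v_0$, the images of $\sigma$ under $\rho_v$ fit together into a connected, locally straight piece of $\bd K$, so they are collinear.

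The missing idea is to fix the base point. Take $v\in I$ such that the point $y$ opposite to $x$ in direction $v$ is normal, and let $w$ run through a small subarc of $I$ around $v$. The points $z(w)$ opposite to $y$ in direction $w$ cover a neighbourhood of $x$. Up to swapping $f$ and $g$, flatness gives
\[
f''_{K,w}\bigl(\pi_{w^\bot}(z(w))\bigr)=\frac{\lambda}{1-\lambda}\,g''_{K,w}\bigl(\pi_{w^\bot}(y)\bigr)=\frac{\lambda}{1-\lambda}\,\kappa(K,y)\Bigl(1+g'_{K,w}\bigl(\pi_{w^\bot}(y)\bigr)^2\Bigr)^{3/2},
\]
where the slope at $y$ is the fixed tangent at $y$ rotated by the angle between $v$ and $w$. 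So the right-hand side is the single number $\kappa(K,y)$ times an explicit continuous function of $w$.

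Combined with $C^1$-regularity, this gives normality of every $z(w)$ and continuity of $\kappa$ near $x$. That is a genuine $C^2$ statement, not merely pointwise twice differentiability. The same formula shows that $N=\{x\in\bd K\mid \kappa(K,x)=0\}$ is open: $x\in N$ forces $y\in N$, hence $z(w)\in N$ for all $w$. Since $N$ is also closed, and $\bd K$ is connected and not contained in a line, $N=\emptyset$.
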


In order to prove Lemma~\ref{lemma:boundary_is_C2}, we need the following elementary result from calculus, whose proof we omit.

\begin{lemma} \label{lemma:curvature_formula}
    Let $K \in \Ktt$, $x \in \bd K$ be regular and $u \in \sphereone$ with $\scpr{u,n(K,x)}>0$. Then $g_{K,u}$ is twice differentiable at $\pi_{u^\bot}(x)$ if and only if $x$ is a normal boundary point of $K$. In this case, we have
    \begin{equation}
        g_{K,u}''(\pi_{u^\bot}(x))= \kappa(K,x) (1+ (g_{K,u}'(\pi_{u^\bot}(x)))^2)^{3/2}.
    \end{equation}
\end{lemma}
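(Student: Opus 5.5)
The plan is to reduce the statement to a comparison between two local graph representations of $\bd K$ near $x$, related by a rotation, and then to transfer second-order Taylor expansions from one representation to the other.

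\emph{Setup.} Let $n\coloneqq n(K,x)$ and let $\theta$ be the angle between $u$ and $n$. Since $\scpr{u,n}>0$, we have $\theta\in[0,\frac\pi2)$. Near $x$, the part of $\bd K$ with outer normals close to $n$ can be written in two ways.
\begin{enumerate}
\item Over the line $u^\bot$ (parametrized as in Remark~\ref{remark:meaning_of_g''}) it is the graph of $t\mapsto -g_{K,u}(t)$, possibly up to the sign convention for the upper part.
\item Over $n^\bot$ it is the graph of $s\mapsto -g_{K,n}(s)$.
\end{enumerate}
Because $x$ is regular and the tangent line at $x$ is not parallel to $u$, both functions are convex and differentiable at the respective base points. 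Moreover, $g_{K,n}'$ vanishes at $\pi_{n^\bot}(x)$, since the tangent line at $x$ is $n^\bot+x$. The two coordinate systems differ by a rotation by $\theta$ together with a translation. A point $(t,g(t))$ has new abscissa $s(t)=t\cos\theta-g(t)\sin\theta$, up to signs and additive constants. The derivative of this map at the base point is $\cos\theta-g'\sin\theta\neq 0$; this is exactly the transversality $\scpr{u,n}>0$. The map $t\mapsto s(t)$ is continuous and strictly monotone near the base point, since it is the projection of a convex arc onto a line transversal to it. It is also bi-Lipschitz near the base point, because the one-sided derivatives of the convex function $g$ are close to $g'$ there.

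\emph{Transfer lemma.} I would first show the following. If two functions $g_1,g_2$ are differentiable at $t_0$ with the same first-order data, and both have graphs transversal to the rotated axis, then $g_1-g_2=o((t-t_0)^2)$ if and only if their rotated representations $h_1,h_2$ satisfy $h_1-h_2=o((s-s_0)^2)$. The argument runs as follows.
\begin{enumerate}
\item A point on graph $g_1$ and the point on graph $g_2$ with the same $t$ are at distance $o(t^2)$.
\item Hence their $s$-coordinates differ by $o(s^2)$, because $t$ and $s$ are comparable by the bi-Lipschitz property.
\item Finally, $h_2$ is Lipschitz near $s_0$ with bounded difference quotients, so shifting $s$ by $o(s^2)$ changes $h_2$ by only $o(s^2)$.
\end{enumerate}
This gives one direction, and the converse is symmetric. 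I expect this lemma to be the main obstacle. The only delicate point is that we have merely a pointwise second-order expansion rather than $C^2$ regularity, so everything must be done with $o(\cdot)$ bounds and uniform Lipschitz estimates near the base point, not with derivatives at nearby points.

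\emph{Conclusion.} Suppose $g_{K,u}$ has a second-order expansion at $t_0=\pi_{u^\bot}(x)$, and let $P$ be its quadratic Taylor polynomial. The rotated graph of $P$ is a smooth curve, and for smooth curves the classical formula gives its curvature at the base point as
\[
  \frac{P''}{(1+P'^2)^{3/2}}.
\]
In the $n$-frame the rotated graph of $P$ has derivative $0$ at $s_0$, so its second derivative there equals this curvature. By the transfer lemma, $g_{K,n}$ differs from this smooth function by $o(s^2)$. Therefore $g_{K,n}$ is twice differentiable at $s_0$, and
\[
  g_{K,n}''(s_0)=\frac{g_{K,u}''(t_0)}{(1+g_{K,u}'(t_0)^2)^{3/2}}.
\]
By \eqref{equation:formula_for_kappa} the left-hand side is $\kappa(K,x)$, which is the claimed formula. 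For the converse, assume $x$ is normal, so $g_{K,n}$ has a second-order expansion at $s_0$. Applying the same argument in the reverse direction shows that $g_{K,u}$ has a second-order expansion at $t_0$, with the same relation between the second derivatives.
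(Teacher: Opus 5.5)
The paper gives no proof of this lemma to compare with: the authors call it an elementary calculus fact and omit the proof. Your proposal fills that gap, and it is correct. Comparing the two local graph descriptions of $\bd K$ near $x$ (over $u^\bot$ and over $n^\bot$, where $n\coloneqq n(K,x)$) is the natural route. Your transfer lemma is what makes it rigorous. A normal point only comes with a pointwise (Peano) second-order expansion, so a direct chain-rule or implicit-function computation is not available; that would need $C^2$ regularity near the base point. Comparing with the smooth graphs of the Taylor polynomials $P$ and $Q$, up to errors $o((t-t_0)^2)$ and $o((s-s_0)^2)$, avoids this, and it gives both implications and the formula at once.

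Two details should be made explicit when you write it up; neither is a gap in the argument.

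\textbf{Hypothesis of the transfer lemma.} The phrase ``graphs transversal to the rotated axis'' is too weak for functions that are differentiable only at $t_0$. Such a function can oscillate with unbounded slopes nearby, and then its rotated graph need not be a graph. What you actually use is:
\begin{itemize}
\item the abscissa map of one curve is a homeomorphism near the base point with $|t-t_0|\le C|s(t)-s_0|$;
\item the other curve is locally a Lipschitz graph in the rotated frame.
\end{itemize}
Both hold in your application. For the boundary arc they follow from convexity, as you note. For the graphs of $P$ and $Q$ they follow from the inverse function theorem. In the reverse direction this uses $\scpr{u,n}\neq 0$, so that the tangent line $n^\bot$ projects nondegenerately onto $u^\bot$.

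\textbf{Interior base points.} Note that $\pi_{u^\bot}(x)\in\relint\pi_{u^\bot}(K)$ and $\pi_{n^\bot}(x)\in\relint\pi_{n^\bot}(K)$. Suppose the first were an endpoint. Then $x+\linh\{u\}$ would support $K$, giving an outer normal at $x$ orthogonal to $u$. By regularity that normal would be $n$, contradicting $\scpr{u,n}>0$. Likewise, if the second were an endpoint, $x+\linh\{n\}$ would support $K$ and produce a second outer normal at $x$. These facts guarantee two things:
\begin{itemize}
\item $g_{K,u}$ is differentiable at $\pi_{u^\bot}(x)$, so the one-sided-derivative argument behind your bi-Lipschitz claim applies;
\item near $x$, the graphs of $-g_{K,u}$ and $-g_{K,n}$ describe the same arc of $\bd K$, so the two representations you compare are really representations of the same curve.
\end{itemize}
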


\begin{proof}[Proof of Lemma~\ref{lemma:boundary_is_C2}]
    Let $I$ be the circular arc of $\lambda$-flat directions from Proposition~\ref{proposition:there_exists_a_circular_arc_of_flat_directions}. We first show that every $x \in \bd K$ is regular. For this, we observe that the set
    \begin{equation} \label{equation:definition_of_S}
        S\coloneqq  \{x \in \bd K\mid \nexists v \in I\colon \pi_{v^\bot}(x) \in \relint (\pi_{v^\bot}(K))\}
    \end{equation}
    contains at most two points, and that every $x \in S$ is non-regular. Let $x \in (\bd K) \setminus S$. Because the set of normal boundary points of $K$ is dense in $\bd K$, there exists a direction $v \in I$ such that $\pi_{v^\bot}(x) \in \relint (\pi_{v^\bot}(K))$ and such that the unique boundary point
    \begin{equation} \label{equation:definition_of_y}
        y \in ((x + \linh\{v\}) \cap \bd K)\setminus \{x\} 
    \end{equation}
    is normal and, in particular, regular.   Because $v\in I$ is $\lambda$-flat, the function $-\lambda g_{K,v}+(1-\lambda)f_{K,v}$ is affine and hence either both $f_{K,v}$ and $g_{K,v}$ are differentiable at $\pi_{v^\bot}(x)=\pi_{v^\bot}(y)$ or none of them. Since $y$ is regular, this shows that $x$ is regular as well. Since $x \in (\bd K \setminus S)$ was arbitrary, this shows that the non-regular boundary points of $K$ are precisely the points in $S$, of which there are at most two. We assume towards a contradiction that there exists a point $x \in S$. If $u\in \sphereone$ satisfies $\pi_{u^\bot}(x)\in \relint \pi_{u^\bot}(K)$, then $f_{K,u}+g_{K,u}$ is not  differentiable at $\pi_{u^\bot}(x)$ and the two points in $(x+\linh\{u\}) \cap \bd \Sulambda K$ are non-regular points of $\Sulambda K$. By the same argument, $S_{\tilde u}^\lambda\Sulambda K$ has at least four non-regular boundary points if $\tilde u \in \sphereone \setminus \{u\}$ is sufficiently close to $u$. Since $S_{\tilde u}^\lambda\Sulambda K$ is affinely equivalent to $K$ and the number of non-regular boundary points is an affine invariant, this contradicts the fact that $K$ has at most two non-regular boundary points. It follows that $S=\emptyset$ and that every boundary point of $K$ is regular.
    
    Using a similar argument, we now show that every $x\in \bd K$ is normal. Let $x \in \bd K$. By the same argument as above and using that the set $S$ from \eqref{equation:definition_of_S} is empty, we know that there exists a direction $v\in I$ such that the unique boundary point defined as in \eqref{equation:definition_of_y} is normal. Because $\pi_{v^\bot}(y) \in \relint (\pi_{v^\bot}(K))$, the outer normal vector $n(y)$ is not orthogonal to $v$. We first consider the case where $\scpr{n(y),v}>0$. Let $\tilde I \subset I$ be an open circular arc with $\scpr{n(y),w}>0$ for all $w \in \tilde I$. By Lemma~\ref{lemma:curvature_formula}, we have
    \begin{equation}
        \kappa(K,y)=\frac{g''_{K,w}(\pi_{w^\bot}(y))}{(1+g'_{K,w}(\pi_{w^\bot}(y))^2)^{3/2}} \quad \text{for all } w\in \tilde I.
    \end{equation}
    Moreover, writing $\theta(v,w)$ for the signed angle between $v$ and $w$, we have
    \begin{equation}
        g'_{K,w}(\pi_{w^\bot}(y))=\tan\left(\arctan(g'_{K,v}(\pi_{v^\bot}(y)))+\theta(v,w)\right)\quad \text{for all } w\in \tilde I.
    \end{equation}
    We define a map $z \colon \tilde I \rightarrow \bd K$ by considering the unique boundary point
    \begin{equation}
        z(w) \in ((y + \linh\{w\}) \cap \bd K)\setminus \{y\} 
    \end{equation}
    for every $w \in \tilde I$.
    Since every $w \in \tilde I$ is $\lambda$-flat, we know that the function $(1-\lambda)f_{K,w}-\lambda g_{K,w}$ is affine and hence twice differentiable for all $w \in \tilde I$. This implies that every $z(w)$, $w\in \tilde I$, is normal with
    \begin{align} \label{eq:lemma:boundary_is_C2:formula_g''}
        g''_{K,-w}(\pi_{w^\bot}(z(w)))&=f''_{K,w}(\pi_{w^\bot}(z(w)))=\frac{\lambda}{1-\lambda}g''_{K,w}(\pi_{w^\bot}(y))\\
        &=\frac{\lambda}{1-\lambda}\cdot\kappa(K,y)\cdot\left(1+\left(\tan\left(\arctan(g'_{K,v}(\pi_{v^\bot}(y)))+\theta(v,w)\right)\right)^2\right)^{3/2}.
    \end{align}
    Because the right-hand side of \eqref{eq:lemma:boundary_is_C2:formula_g''} is continuous, it follows that $w\mapsto g''_{K,-w}(\pi_{w^\bot}(z(w)))$ is continuous. Moreover, since every boundary point of $K$ is regular, $\bd K$ is a $C^1$ submanifold of $\R^2$ \cite[Thm.~2.2.4]{sch13}, and the function $w\mapsto g'_{K,-w}(\pi_{w^\bot}(z(w)))$ is continuous as well. Therefore, Lemma~\ref{lemma:curvature_formula} implies that $w \mapsto \kappa(K,z(w))$ is continuous.

    In the case where $\scpr{n(y),v}<0$, we can repeat the argument above, reversing the roles of $f_{K,w}=g_{K,-w}$ and $g_{K,w}=f_{K,-w}$. Since $x \in \bd K$ was arbitrary, this concludes the proof that $K$ has a $C^2$ boundary.

    It remains to show that $\kappa(K,\cdot)$ is strictly positive. Because $x \mapsto \kappa(K,x)$ is continuous, the set $N\coloneqq\{x \in \bd K\mid \kappa(K,x)=0\}$ is a relatively closed subset of $\bd K$. Let $x \in N$. Because the set $S$ from \eqref{equation:definition_of_S} is empty, there exists a $v\in I$ such that $\pi_{v^\bot}(x)\in \relint \pi_{v^\bot}(K)$. Let $y \neq x$ be as in \eqref{equation:definition_of_y}. In view of \eqref{eq:lemma:boundary_is_C2:formula_g''} and $x \in N$, we have $y \in N$. Let $\tilde I \subset I$ be open with $v \in \tilde I$ and the property that $\pi_{\tilde v^\bot}(y)\in \relint \pi_{\tilde v^\bot}(K)$ for all $\tilde v\in \tilde I$. Again, by \eqref{eq:lemma:boundary_is_C2:formula_g''}, for every $z \in \bd K$ with $\pi_{\tilde v^\bot}(z)=\pi_{\tilde v^\bot}(y)$ we have $z \in N$. Since $\tilde I$ is open and contains $v$, the set of these points $z \in N$ contains a (relatively) open neighborhood of the arbitrary point $x \in N$, which shows that $N$ is relatively open. Because $N$ is also relatively closed and $\bd K$ is connected, it follows that $N=\bd K$ or $N=\emptyset$. But $N=\bd K$ is impossible because $\bd K$ is not contained in a line.
\end{proof}

Having established that the boundary of a $\lambda$-invariant body $K \in \Ktt$ for $\lambda \in(0,\frac12]$ is $C^2_+$, we can now use a second-order ordinary differential equation to analyze the structure of the boundary. It turns out that the existence of an open circular arc $I \subset \sphereone$ of $\lambda$-flat directions forces $K$ to be an ellipse; this leads to a contradiction unless $\lambda=\frac12$.

\begin{proposition} \label{proposition:if_there_exists_an_arc_then_lambda_is_1/2}
    Let $K \in \Ktt$ be $\lambda$-invariant for $\lambda \in (0,\frac12]$. %
    Then $\lambda =\frac12$.
\end{proposition}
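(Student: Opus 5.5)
Assume $\lambda\in(0,\frac12)$ and derive a contradiction. The plan is to show that $K$ must be an ellipse, and then observe that an ellipse is not $\lambda$-invariant for $\lambda\neq\frac12$.

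\textbf{Setup.} By Proposition~\ref{proposition:there_exists_a_circular_arc_of_flat_directions} there is an open arc $I$ of $\lambda$-flat directions. By Lemma~\ref{lemma:boundary_is_C2}, $\bd K$ is $C^2_+$. Fix $v\in I$ and write $f=f_{K,v}$, $g=g_{K,v}$ on the interval $\pi_{v^\bot}(K)=[\alpha,\beta]$. Flatness of $v$ says that $(1-\lambda)f-\lambda g$ is affine, hence $(1-\lambda)f''=\lambda g''$ on $(\alpha,\beta)$. By Lemma~\ref{lemma:curvature_formula}, this becomes a pointwise relation between the curvatures at the two endpoints $x,y$ of each chord parallel to $v$:
\begin{equation}
(1-\lambda)\,\kappa(K,x)\,(1+f'^2)^{3/2}=\lambda\,\kappa(K,y)\,(1+g'^2)^{3/2}.
\end{equation}

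\textbf{Key step: the endpoints of the projection interval.} At an endpoint $\alpha$, both $f'$ and $g'$ blow up. The $\lambda$-section $H^\lambda_vK$ is a segment that meets $\bd K$ at the two points where the supporting lines parallel to $v$ touch $K$, because these are the points where the chord degenerates. Near such a touching point $p$, use $C^2_+$: locally $\bd K$ is a graph over the tangent line with curvature $\kappa(K,p)>0$. Then $f(t)$ and $-g(t)$ both behave like $c\mp\sqrt{2(t-\alpha)/\kappa}$, with the same leading coefficient, because they are the two branches of the same osculating parabola. Hence $(1-\lambda)f-\lambda g$ has leading term $\bigl((1-\lambda)-\lambda\bigr)\cdot(\text{const})\sqrt{t-\alpha}$ with a nonzero sign in front. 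This term is nonzero unless $\lambda=\frac12$, which contradicts affinity. More carefully, writing the branches as $\mp\sqrt{2(t-\alpha)/\kappa}+O(t-\alpha)$, the combination is
\begin{equation}
(1-\lambda)f-\lambda g=\text{affine}-\sqrt{2(t-\alpha)/\kappa}\,\bigl((1-\lambda)-\lambda\bigr)+O(t-\alpha)\cdot(\dots),
\end{equation}
and a $\sqrt{t-\alpha}$ term cannot be cancelled by an affine function. Strictly, $C^2$ with $\kappa>0$ only gives $f=c-\sqrt{2(t-\alpha)/\kappa}\,(1+o(1))$, and that is enough, since an affine function has bounded derivative while the derivative here diverges like $\bigl(1-2\lambda\bigr)(t-\alpha)^{-1/2}$.

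\textbf{Main obstacle.} This local argument may already finish the proof. It is the place where $C^2_+$ is genuinely needed, and the ODE machinery alluded to in the text would be unnecessary. The delicate point is to justify the asymptotic expansion of the two branches of $\bd K$ near the tangency point from mere $C^2$ regularity with positive curvature. I would do this via the inverse function theorem: parametrize $\bd K$ near $p$ as $s\mapsto p+s\,v+h(s)\,n$, where $h(0)=h'(0)=0$ and $h''(0)=\kappa>0$. Then the two branches correspond to $s=\pm\sqrt{2(t-\alpha)/\kappa}\,(1+o(1))$. If this turned out subtler than expected, my fallback would be the intended global route: integrate the curvature relation along $I$ to show that $K$ is an ellipse, and then note that for an ellipse the $\lambda$-section is a straight chord only when $\lambda=\frac12$.
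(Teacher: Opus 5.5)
Your argument is correct, and it takes a genuinely different and much shorter route than the paper.

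\textbf{The paper's route.} The paper normalizes $K$ (via the shear $S^\lambda_{K,e_2}$ and a diagonal scaling) so that a tangency point sits at the origin with $\kappa(\tilde K,0)=\frac{2}{w(\tilde K,e_2)}\cdot\frac{\lambda}{1-\lambda}$. It then uses the whole arc $I$ of $\lambda$-flat directions to derive a second-order ODE for the boundary along rays from that point. Solving this ODE via Picard--Lindel\"of shows that an arc $A$ of $\bd\tilde K$ is circular. From this it deduces that the opposite arc $A'$ is elliptic with axis ratio $\frac{\lambda}{1-\lambda}$, and it reaches a contradiction by comparing curvatures at the co-vertex of $A'$.

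\textbf{Your route.} You use a single $\lambda$-flat direction $v$ and only second-order information at one tangency point $p$. Write $\bd K$ near $p$ as $\tau=\phi(\sigma)$ with $\phi(\sigma)=\frac{\kappa}{2}\sigma^2+o(\sigma^2)$. Then the chord endpoints at height $\tau$ are $\sigma_\pm=\pm\sqrt{2\tau/\kappa}\,(1+o(1))$, so $(1-\lambda)f-\lambda g=c-(1-2\lambda)\sqrt{2\tau/\kappa}+o(\sqrt{\tau})$. No affine function $c+b\tau$ can match this unless $\lambda=\frac12$: subtract $c$, divide by $\sqrt{\tau}$, and let $\tau\to0$.

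\textbf{What each approach buys.} Your argument needs only that $p$ is a normal boundary point with $\kappa(K,p)>0$ and that the supporting line meets $K$ only in $p$; both are supplied by Lemma~\ref{lemma:boundary_is_C2}. The arc $I$ enters only through that lemma. The ODE, the implicit function theorem and the elliptic-arc comparison become superfluous. Your route also shows that the entire difficulty lies in the regularity step. The same local obstruction works on the normal section through $p$ spanned by $v$ and $n(p)$, so in any dimension a $\lambda$-flat direction with $\lambda\in(0,\frac12)$ is incompatible with a tangency point of positive normal curvature in direction $v$. This may be useful for Conjecture~\ref{conjecture:characterization_of_ellipsoids}. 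The paper's route extracts more global structure (a circular arc of $\bd\tilde K$), but that extra information is not used afterwards.

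\textbf{Minor corrections.} The expansion of $\sigma_\pm$ follows directly from the second-order Taylor expansion of $\phi$, not from the inverse function theorem. That theorem does not apply at $\sigma=0$ anyway, since $\phi'(0)=0$. Your remark that the derivative diverges needs an extra step beyond the $(1+o(1))$ expansion of values, for instance $\sigma_\pm'=1/\phi'(\sigma_\pm)$ with $\phi'(\sigma)=\kappa\sigma+o(\sigma)$. Neither point matters, because the value-based argument already suffices. So your ``main obstacle'' is not really one, and the fallback is unnecessary.
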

\begin{proof}
    We define $u \colon \R\rightarrow \sphereone$, $u(\theta)\coloneqq (\sin(\theta), \cos(\theta))^\transp$. Without loss of generality, let $e_2=u(0) \in I$, where $I \subset \sphereone$ is the open circular arc of $\lambda$-flat directions from Proposition~\ref{proposition:there_exists_a_circular_arc_of_flat_directions}. We first apply some preparatory modifications to $K$. Because $e_2$ is $\lambda$-flat, the map $S_{K,e_2}^\lambda$ from Definition \ref{definition:pointwise_lambda_symmetrization} is affine (in fact, an affine shear), and Lemma~\ref{lemma:lambda_steiner_affine_transformation} implies that there exists an $\varepsilon>0$ such that every  $u(\theta)$ with $\theta \in (-\varepsilon,\varepsilon)$ is $\lambda$-flat for the body $S^\lambda_{e_2}(K)$. By Lemma~\ref{lemma:boundary_is_C2}, the boundary of $S^\lambda_{e_2}(K)$ is $C^2_+$, and hence
    the concave function $|f_{K,e_2}+g_{K,e_2}|=|f_{S^\lambda_{e_2}(K),e_2}+g_{S^\lambda_{e_2}(K),e_2}|$ has a unique global maximizer $y$. Writing $w(S^\lambda_{e_2}(K),e_2)$ for the width of $S^\lambda_{e_2}(K)$ in direction $e_2$, the global maximum satisfies
    \begin{equation}
        \omega \coloneqq|f_{S^\lambda_{e_2}(K),e_2}(y)+g_{S^\lambda_{e_2}(K),e_2}(y)|=w(S^\lambda_{e_2}(K),e_2).
    \end{equation}
    We consider the unique boundary point $z \in (y + \pos\{-e_2\}) \cap \bd S^\lambda_{e_2}(K)$. By Lemma~\ref{lemma:boundary_is_C2}, $\kappa_z \coloneqq \kappa(S^\lambda_{e_2}(K),z)$ is positive. Setting
    \begin{equation}
        M\coloneq\begin{bmatrix} \sqrt{\frac{\kappa_z\omega}2\cdot\frac{1-\lambda}\lambda} &0 \\
            0& 1
        \end{bmatrix} \quad \text{and} \quad \tilde K \coloneqq M(S^\lambda_{e_2}(K)-z),
    \end{equation}
    we observe that $\tilde K$ has the following properties:
    \begin{enumerate}[label=(\roman*)]
        \item \label{item:property1} The origin is the unique global maximizer of $|f_{\tilde K,e_2}+g_{\tilde K,e_2}|$ and satisfies
        \begin{equation} \label{eq:property1_width}
            |f_{\tilde K,e_2}(0)+g_{\tilde K,e_2}(0)|=-g_{\tilde K,e_2}(0)=w(\tilde K,e_2)=w(S^\lambda_{e_2}(K),e_2)=\omega
        \end{equation}
        as well as
        \begin{equation} \label{eq:formula_for_kappa_at_origin}
            \kappa(\tilde K, 0)=\kappa_z \cdot \frac{2}{\kappa_z\omega} \cdot \frac\lambda{1-\lambda}= \frac{2}{w(\tilde K, e_2)} \cdot \frac\lambda{1-\lambda}.%
        \end{equation}
        Moreover, because $(1-\lambda)\cdot f_{\tilde K,e_2}  -\lambda \cdot g_{\tilde K,e_2}$ is constant, we have
        \begin{equation} \label{eq:property1_derivatives}
            f'_{\tilde K,e_2}(0) = \frac{\lambda}{1-\lambda} g'_{\tilde K,e_2}(0) = 0.
        \end{equation}
        \item \label{item:property2} By Lemma~\ref{lemma:lambda_steiner_affine_transformation}, there exists an $\varepsilon>0$ such that any  $u(\theta)$ with $\theta \in (-\varepsilon,\varepsilon)$ is $\lambda$-flat for the body $\tilde K$. Choosing $\varepsilon\in\left(0,\frac\pi2\right)$ small enough, we can ensure the existence of a $\delta>0$ such that the lines
        \begin{equation}
            \linh \{u(\theta)\}+\delta \cdot u\left(\theta+\frac\pi2\right) \quad\text{and}\quad \linh \{u(\theta)\}- \delta\cdot u\left(\theta+\frac\pi2\right)
        \end{equation}
        intersect the body $\tilde K$ for all $\theta \in (-\varepsilon,\varepsilon)$ and such that the unique boundary point
        \begin{equation} \label{eq:z_theta}
            z(\theta) \in (\linh\{u(\theta)\} \cap \bd \tilde K)\setminus \{0\} 
        \end{equation}
        satisfies
        \begin{equation}
            \norm{\pi_{e_2^\bot}(z(\theta))} < \delta \quad \text{for all } \theta \in (-\varepsilon,\varepsilon).
        \end{equation}
    \end{enumerate}
    By property \ref{item:property2}, the function
    \begin{align}
        h\colon (-\varepsilon,\varepsilon) \times (-\delta,\delta) \rightarrow \R, \quad h(\theta,x)\coloneqq-g_{\tilde K,u(\theta)}\left(x\cdot u\left(\theta+\frac\pi2\right)\right),
    \end{align}     
    is well-defined. Our intermediate goal is to show that $h(\theta,0)=h(0,0)\cos(\theta)=\omega \cos(\theta)$ for $\theta \in (-\varepsilon,\varepsilon)$, which implies that $\bd \tilde K$ contains an arc of a circle.

    We know from Lemma~\ref{lemma:boundary_is_C2} that $h$ is twice continuously differentiable in the second argument for all $\theta \in (-\varepsilon,\varepsilon)$. To show that $h(\cdotbox,0)$ is of class $C^2$, we recall the definition of $z(\theta)$ from \eqref{eq:z_theta} and define
    \begin{equation}
        x \colon (-\varepsilon,\varepsilon)\rightarrow (-\delta,\delta), \quad x(\theta) = \scpr{\pi_{e_2^\bot}(z(\theta)),e_1},
    \end{equation}
    which is again well-defined by property \ref{item:property2}. We observe that
    \begin{align}
        h(\theta,0) = \norm{z(\theta)}_2 = \sqrt{x(\theta)^2+h(0,x(\theta))^2}.
    \end{align}
    Since $\norm{z(\theta)}_2^2 =x(\theta)^2+h(0,x(\theta))^2>0$ for $\theta \in (-\varepsilon,\varepsilon)$ by property \ref{item:property2},  the twofold continuous differentiability of $h(\cdotbox,0)$ is shown if $\theta \mapsto x(\theta)$ is of class $C^2$. Decreasing $\varepsilon>0$ if necessary, the latter follows from the implicit function theorem, in combination with the fact that $x(\theta)$ can be implicitly defined by
    \begin{align}
        F(\theta,x(\theta))\coloneqq \tan (\theta) - \frac{x(\theta)}{h(0,x(\theta))} =0 \quad \text{for } \theta \in (-\varepsilon,\varepsilon),
    \end{align}
    where $F$ is $C^2$ and satisfies $F^{(0,1)}(0,0)=-\frac1{h(0,0)}\neq 0$.%

    The crucial ingredient for the rest of the proof is the following functional equality, which follows immediately from the definition of $h$: We have
    \begin{align}
        h(\theta, \sin(\alpha) \cdot h(\alpha+\theta,0)) = \cos(\alpha) \cdot h(\alpha+\theta,0)
    \end{align}
    for all $\alpha, \theta \in \R$ for which the expression makes sense. Differentiating once with respect to $\alpha$, we get
    \begin{align} \label{eq_first_derivative_of_g}
        h^{(0,1)}(\theta, \sin(\alpha) \cdot h(\alpha+\theta,0)) \left(\cos(\alpha) \cdot h(\alpha+\theta,0)+\sin(\alpha) \cdot h^{(1,0)}(\alpha+\theta,0) \right)\\
        \quad\quad=-\sin(\alpha)\cdot h(\alpha+\theta,0)+\cos(\alpha) \cdot h^{(1,0)}(\alpha+\theta,0).
    \end{align}
    Setting $\alpha=0$, this yields
    \begin{equation}  \label{eq_first_derivative_of_g_alpha=0}
        h^{(0,1)}(\theta,0) \cdot h(\theta,0) = h^{(1,0)}(\theta,0).
    \end{equation}
    If we differentiate \eqref{eq_first_derivative_of_g} once again with respect to $\alpha$, we obtain
    \begin{align}
        &h^{(0,2)}(\theta, \sin(\alpha) \cdot h(\alpha+\theta,0))\left(\cos(\alpha) \cdot h(\alpha+\theta,0)+\sin(\alpha) \cdot h^{(1,0)}(\alpha+\theta,0)\right)^2\\
        &\quad\quad\quad +h^{(0,1)}(\theta, \sin(\alpha)  h(\alpha+\theta,0)) \left(-\sin(\alpha) h(\alpha+\theta,0)+2\cos(\alpha)h^{(1,0)}(\alpha+\theta,0)\right.\\&\hspace{9cm}\left.+\sin(\alpha) \cdot h^{(2,0)}(\alpha+\theta,0)\right)\\
        &\quad\quad =-\cos(\alpha)\cdot h(\alpha+\theta,0)-2\sin(\alpha)\cdot h^{(1,0)}(\alpha+\theta,0) + \cos(\alpha) \cdot h^{(2,0)}(\alpha+\theta,0),
    \end{align}
    which for $\alpha=0$ simplifies to
    \begin{align} \label{equation:ODE_preliminary_form}
        h^{(0,2)}(\theta,0) \cdot h(\theta,0)^2+2h^{(0,1)}(\theta,0) \cdot h^{(1,0)}(\theta,0)=-h(\theta,0)+h^{(2,0)}(\theta,0).
    \end{align}
    In the following, we write $h(\theta) \coloneqq h(\theta,0)$ for simplicity. Because every $u(\theta)$ for $\theta \in (-\varepsilon,\varepsilon)$ is $\lambda$-flat, we have
    \begin{equation}
        \frac{\lambda}{1-\lambda}g''_{\tilde K,u(\theta)}(0)=f''_{\tilde K,u(\theta)}(0)
    \end{equation}
    for all $\theta \in (-\varepsilon,\varepsilon)$. Using that
    $|f'_{\tilde K,u(\theta)}(\pi_{u(\theta)^\bot}(0))|=|\!\tan(\theta)|$, Lemma~\ref{lemma:curvature_formula} yields the relation
    \begin{equation} \label{equation:relation_for_second_derivative}
        h^{(0,2)}(\theta,0) = -\frac{1-\lambda}\lambda \cdot \kappa(\tilde K,0)\cdot(1+\tan(\theta)^2)^{3/2} = -\frac{1-\lambda}\lambda \cdot \frac{\kappa(\tilde K,0)}{\cos(\theta)^3}.
    \end{equation}
    If we plug \eqref{eq_first_derivative_of_g_alpha=0} and \eqref{equation:relation_for_second_derivative} into \eqref{equation:ODE_preliminary_form}, we obtain the second-order ordinary differential equation
    \begin{equation} \label{eq:second_order_ODE}
        -\frac{1-\lambda}\lambda \cdot \frac{\kappa(\tilde K,0)}{\cos(\theta)^3}\cdot h(\theta)^2+2 \frac{(h'(\theta))^2}{h(\theta)}+ h(\theta) = h''(\theta).
    \end{equation}
    Because $h(\theta)>0$ for $\theta \in (-\varepsilon,\varepsilon)$ and $\varepsilon<\frac\pi2$, the left-hand side is locally Lipschitz continuous and the Picard-Lindelöf theorem implies that \eqref{eq:second_order_ODE} uniquely determines $h$, given a pair of initial values $h(0)$ and $h'(0)$. Recalling property \ref{item:property1}, we know from \eqref{eq:property1_width} that
    \begin{equation}
        h(0)=-g_{\tilde K,e_2}(0)=w(\tilde K,e_2).
    \end{equation}
    Combining \eqref{eq_first_derivative_of_g_alpha=0} with \eqref{eq:property1_derivatives}, we get
    \begin{equation}
        h'(0)= h^{(0,1)}(0,0) \cdot h(0)=-g'_{\tilde K, e_2}(0)\cdot h(0)=0.
    \end{equation}
    We also recall from \eqref{eq:formula_for_kappa_at_origin} that
    \begin{equation}
        \kappa(\tilde K,0)= \frac{2}{w(\tilde K, e_2)}\cdot \frac\lambda{1-\lambda}=\frac{2}{h(0)}\cdot \frac\lambda{1-\lambda}.
    \end{equation}
    Plugging this into the differential equation \eqref{eq:second_order_ODE}, we get
    \begin{equation}
         h''(\theta)=-\frac{2}{h(0)}\cdot\frac{h(\theta)^2}{\cos(\theta)^3}+2 \frac{(h'(\theta))^2}{h(\theta)}+ h(\theta).
    \end{equation}
    Given the initial values $h(0)=w(\tilde K,e_2)$ and $h'(0)=0$, the unique solution is given by
    \begin{equation}
        h(\theta)=h(0) \cdot \cos(\theta)= w(\tilde K,e_2) \cdot\cos(\theta),
    \end{equation}
    because we have
    \begin{equation}
         -h(0)\cdot\cos(\theta)=-\frac{2}{h(0)}\cdot h(0)^2\cdot\frac{\cos(\theta)^2}{\cos(\theta)^3}+2\cdot h(0)\cdot \frac{\sin(\theta)^2}{\cos(\theta)}+ h(0)\cdot\cos(\theta)
    \end{equation}
    for $\theta \in (-\varepsilon,\varepsilon)$.
    
    We recall that $z(\theta)=h(\theta) u(\theta)$. Because $h(\theta)=\omega\cos(\theta)=w(\tilde K,e_2)  \cos(\theta)$, the set
    \begin{equation}
        A\coloneqq \{z(\theta)\mid \theta \in(-\varepsilon,\varepsilon)\}=\bd \tilde K \cap \left(\pos \{u(\theta)\mid \theta \in (-\varepsilon,\varepsilon)\}\setminus\{0\}\right)
    \end{equation}
    is an arc of a circle. This circle contains $0$ and $h(0) e_2$ as antipodal points, it is centered at $\frac{w(\tilde K,e_2)}{2} e_2$ and has radius $\frac{w(\tilde K,e_2)}{2}$. 
    
    We assume towards a contradiction that $\lambda < \frac12$. Because the $\lambda$-section $H^\lambda_{e_2}(\tilde K)$ is parallel to $e_2^\bot$, the set
    \begin{equation}
        A' \coloneqq [\bd \tilde K \cap \pi^{-1}_{e_2^\bot}(\pi_{e_2^\bot}(A))] \setminus A
    \end{equation}
    is an arc of an ellipse, which is symmetric around $\linh\{e_2\}$, and whose principal axes have lengths
    \begin{equation}
        w(\tilde K,e_2) \quad \text{and} \quad \frac\lambda{1-\lambda}\cdot w(\tilde K,e_2)<w(\tilde K,e_2).
    \end{equation}
    Since the curvature of an ellipse is minimized precisely at its co-vertices, the curvature $\kappa(\tilde K,\cdot)$ is uniquely minimized on $A'$ at the origin. By Lemma~\ref{lemma:curvature_formula} and \eqref{eq:property1_derivatives}, this minimum satisfies
    \begin{equation}
        \kappa(\tilde K,0)=f_{\tilde K,e_2}''(0)=\frac\lambda{1-\lambda}\cdot g_{\tilde K,e_2}''(0)=\frac\lambda{1-\lambda}\cdot \kappa_A,
    \end{equation}
    where $\kappa_A$ denotes the constant curvature of the circular arc $A$. Now let $x\in A \setminus\{\omega e_2\}$ be sufficiently close to $\omega e_2$ such that $n(\tilde K,x) \in \{u(\theta)\mid \theta \in (-\varepsilon,\varepsilon)\}$. Because $A$ is a circular arc, the line
    \begin{equation}
        x+ \linh \{n(\tilde K,x)\}
    \end{equation}
    intersects the elliptic arc $A'$ at some point $x'\neq 0$. Writing $\tilde u \coloneqq n(\tilde K,x)$, we have $g'_{\tilde K,\tilde u}(\pi_{\tilde u^\bot}(x))=0$. Using Lemma~\ref{lemma:curvature_formula} and the fact that $\tilde u$ is $\lambda$-flat, we obtain
    \begin{align}
        \kappa(\tilde K, x')=\frac{f_{\tilde K,\tilde u}''(\pi_{\tilde u^\bot}(x))}{ (1+ (f_{\tilde K,\tilde u}'(\pi_{\tilde u^\bot}(x)))^2)^{3/2}}
         \leq f_{\tilde K,\tilde u}''(\pi_{\tilde u^\bot}(x)) = \frac\lambda{1-\lambda} \cdot g_{\tilde K,\tilde u}''(\pi_{\tilde u^\bot}(x))&= \frac\lambda{1-\lambda} \cdot \kappa_A\\&=\kappa(\tilde K,0),%
    \end{align}
    in contradiction to the fact that $\kappa(\tilde K,\cdot)$ is uniquely minimized on $A'$ at $0$. It follows that $\lambda=\frac12$.
\end{proof}

It is now easy to prove Theorem~\ref{theorem:characterization_of_ellipses_and_triangles}.

\begin{proof}[Proof of Theorem~\ref{theorem:characterization_of_ellipses_and_triangles}]
    Let $K \in \Ktt$ be $\lambda$-invariant for some $\lambda\in[0,\frac12]$. The claim that $\lambda \in \{0,\frac12\}$ follows from Proposition~\ref{proposition:if_there_exists_an_arc_then_lambda_is_1/2}. That $K$ has to be a triangle in the first case and an ellipse in the second case follows from Lemma~\ref{lemma:sigma_invariance_and_sigma_universality}.
\end{proof}

\section{Proof of Theorem~\ref{theorem:solution_to_planar_question}}

In order to resolve the planar case of Question~\ref{question:precise_question}, we need one final ingredient.

\begin{proposition} \label{proposition:if_there_exists_S0_invariant_body_containment_of_SL2}
    Let $G\leq\GL(2,\R)$ be closed. If there exists a body $L^*\in\Ktt$ that is $S^0$-invariant modulo $G$, then $G$ contains $\SL(2,\R).$
\end{proposition}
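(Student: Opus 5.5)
Suppose $L^*\in\Ktt$ is $S^0$-invariant modulo $G$. Then it is in particular $S^0$-invariant modulo $\GL(2,\R)$, i.e.\ $0$-invariant. By Theorem~\ref{theorem:characterization_of_ellipses_and_triangles}, $L^*$ must be a triangle. Alternatively, apply Lemma~\ref{lemma:sigma_invariance_and_sigma_universality} with the $S^0$-universal triangle $T=\conv\{0,e_1,e_2\}$ and $G=\GL(2,\R)$. After applying Lemma~\ref{lemma:sigma_invariance_and_sigma_universality} once more with the given $G$, some homothet of $T$ is $G$-equivalent to $L^*$, and hence $T$ itself is $S^0$-invariant modulo $G$. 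So it suffices to show: if $T$ is $S^0$-invariant modulo $G$, then $\SL(2,\R)\subset G$.

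The plan is to extract many elements of $G$ from single shakings of $T$. For each $u\in\sphereone$, the shaking $S^0_uT$ is a triangle of the same area, so $S^0_uT=A_uT+z_u$ with $A_u\in G$. Since $T$ has only finitely many affine symmetries, namely the permutations of its vertices, $A_u$ is determined up to right multiplication by the finite group $P\subset\GL(2,\R)$ of linear parts of these symmetries. Shaking preserves area, so $|\det A_u|=1$. Consequently the set $\{A_u\}\cdot P\subset G$ is contained in $G':=\langle G\cap\{|\det|=1\}\rangle$. Let $H$ be the closed subgroup generated by all $A_uP$. Since $G$ is closed, $H\subset G$. The task is to show $H\supset\SL(2,\R)$.

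Next, I would consider the identity component $H_0$. The map $u\mapsto S^0_uT$ is continuous by Lemma~\ref{lemma:steiner_operations_continuity}. Along an arc of directions $u$ for which $u$ is $0$-flat for $T$ (every direction is $0$-flat for a triangle, with the flat side being the one hit by the chords), the shaking map $S^0_{T,u}$ is an explicit affine shear-like map. I would choose $A_u$ to be the linear part of this map, which depends continuously on $u$. Then $A_uA_{u_0}^{-1}$ traces a continuous curve in $H$ through the identity, so $H_0$ is a nontrivial connected Lie subgroup of $\SL^{\pm}$ with determinant $\pm1$. Concretely, for $u$ near $e_2$, $S^0_{T,u}$ fixes the line through the base and shears along $u$. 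Its linear part is a transvection $x\mapsto x-\ell_u(x)u$ with $\ell_u(u)=0$, whose fixed line is the direction of the side being mapped onto $u^\bot$. Varying $u$ gives transvections with varying fixed directions and varying shear directions. The Lie algebra of $H_0$ therefore contains the nilpotent elements $n_u=u\,\ell_u^{\transp}$ for a continuum of $u$. Two such nilpotents with different kernels already generate $\mathfrak{sl}(2,\R)$, because the only proper subalgebras of $\mathfrak{sl}_2$ are of dimension at most $2$ (Borel subalgebras or tori), and a Borel contains nilpotents with only one common kernel. Hence $H_0=\SL(2,\R)$.

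The main obstacle is making rigorous that the linear parts $A_u$ genuinely lie in $G$ and form a continuous family. Invariance only guarantees $S^0_uT\in[T]_G$ through \emph{some} $A\in G$, which is determined only up to the finite symmetry group $P$. The flat map $S^0_{T,u}$ is one choice, but the witness in $G$ may differ from it by an element $p\in P$. I would handle this with a measure/Baire argument. Partition the directions according to which $p\in P$ satisfies $S^0_{T,u}$-linear-part $\cdot\, p\in G$. Each class is closed, since $G$ is closed and the linear part of $S^0_{T,u}$ is continuous in $u$. There are finitely many classes, so one class contains an arc. On that arc, the products $B_uB_{u_0}^{-1}$ with $B_u:=(\text{linear part of }S^0_{T,u})\,p$ lie in $G$ and equal $(\text{lin } S^0_{T,u})(\text{lin } S^0_{T,u_0})^{-1}$, in which the factor $p$ cancels. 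This produces the continuous curve in $G$ needed for the Lie algebra argument above. From there, $\mathfrak{sl}(2,\R)\subset\mathrm{Lie}(G)$ by Cartan's closed subgroup theorem, and hence $\SL(2,\R)\subset G$.
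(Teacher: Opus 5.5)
Your route differs from the paper's. The paper uses that \emph{every} non-obtuse triangle is $S^0$-universal, so by Lemma~\ref{lemma:sigma_invariance_and_sigma_universality} all non-obtuse triangles of equal area are equivalent modulo $H=G\cap\SL^\pm(2,\R)$. It then applies this to $M_iT$ with $M_i\to I_2$ and passes to a subsequence with constant vertex permutation. This shows that $H$ contains a neighbourhood of $I_2$, so $H$ is open and contains $\SL(2,\R)$, with no Lie theory. Your reduction to $T=\conv\{0,e_1,e_2\}$ being $S^0$-invariant modulo $G$ is fine. So is the Baire argument giving a sub-arc $J$ and a fixed $p\in P$ with $A_up\in G$, hence $A_uA_{u_0}^{-1}\in G$, for $u,u_0\in J$.

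There is, however, a gap in the step that has to carry the geometric content. From ``$u\mapsto A_uA_{u_0}^{-1}$ is a curve in $G$'' you conclude that $\mathrm{Lie}(G)$ contains the nilpotents $n_u$ with $A_u=I-n_u$. This does not follow. A smooth curve $\gamma$ in a closed subgroup only gives $\dot\gamma\gamma^{-1}\in\mathrm{Lie}(G)$, not the logarithms of its points; moreover $A_u$ itself need not lie in $G$.

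Concretely, take $u=(\sin\theta,\cos\theta)^\transp$ with $\theta\in(-\frac\pi4,0)$, where $[0,e_1]$ is the lower edge, and $v=(\cos\theta,-\sin\theta)^\transp$. Then $A_u=I-\tan\theta\,uv^\transp$, and the elements you actually obtain are
\[
X(\theta)=\dot A_uA_u^{-1}=-uv^\transp+\tan\theta\,(uu^\transp-vv^\transp).
\]
These have determinant $-\tan^2\theta\neq0$, so they are not nilpotent and the ``two nilpotents with different kernels'' argument does not apply.

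What you must show instead is that the $X(\theta)$, $\theta\in J$, span $\mathfrak{sl}(2,\R)$. Equivalently, the curve lies neither in a Borel subgroup nor in a one-parameter subgroup. This is true:
\begin{itemize}
\item $X_{12}=3\sin^2\theta$ and $X_{21}=3\sin^2\theta-1$;
\item $X_{11}=-2\sin\theta\cos\theta+\sin^3\theta/\cos\theta$ is odd and not identically zero.
\end{itemize}
Hence the three coordinate functions are linearly independent real-analytic functions on $(-\frac\pi2,\frac\pi2)$, and therefore also on every subinterval $J$. With this computation in place of the nilpotent claim, Cartan's theorem gives $\mathfrak{sl}(2,\R)\subset\mathrm{Lie}(G)$ and the proof goes through.

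Several side claims also need correcting:
\begin{itemize}
\item Not every direction is $0$-flat for a triangle; only one of $\pm u$ is. For the edge $[0,e_1]$ the flat arc is $\theta\in[-\frac\pi4,0]$, so ``$u$ near $e_2$'' works only on one side.
\item The linear part of $S^0_{T,u}$ fixes $\R u$ and sends the edge direction into $u^\bot$. It does not fix the base line.
\item The claim $\{A_u\}\cdot P\subset G$, and hence $H\subset G$, is false in general: $G=\SL(2,\R)$ is allowed while $P$ contains reflections. Your last paragraph already avoids this, so the earlier definition of $H$ should be dropped.
\end{itemize}
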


\begin{proof}
    Writing $\SL^\pm(2,\R)\coloneqq \{A \in \GL(2,\R) \mid |\!\det A|=1\}$, we set $H \coloneqq G \cap\SL^\pm(2,\R)$. The claim follows if we show that $\SL(2,\R)\leq H$. By \cite[Rem.~2.2]{ccg01}, any triangle with non-obtuse angles is $S^0$-universal. Because shaking preserves volume, the body $L^*$ is $S^0$-invariant modulo $H$. Now Lemma~\ref{lemma:sigma_invariance_and_sigma_universality}  implies that every pair of non-obtuse triangles $T_1,T_2$ with $\vol T_1=\vol T_2$ satisfies $T_1,T_2\in [rL^*]_H$ for a suitably chosen $r>0$ and hence $T_1\in[T_2]_H$. For $i \in \N$, let $U_i\coloneqq \{M \in \SL^\pm(2,\R) \mid \norm{M-I_2}<\frac1i\}$. We assume towards a contradiction that there exists a sequence $(M_i)_{i\in\N}$ with $M_i \in U_i$ but $M_i \notin H$ for all $i\in \N$. Let $T \in \Ktt$ be a regular triangle and let $T_i=M_iT$ for $i \in \N$. Then there exists an index $i^*$ such that all $T_i$ with $i\geq i^*$ are non-obtuse. For all $i\geq i^*$, we have $T_i \in [T]_H$ and hence $M_i P_i \in H$, where $P_i \in \SL^\pm(2,\R)$ corresponds to a permutation of the vertices of $T$. But there are only six possibilities for $P_i$, which implies that there exists a subsequence $(M_{k(i)})_{i\in\N}$ such that $P_{k(i)}=P_{k(j)}\coloneqq P^*$ for all $i,j \in \N$. Since $(M_{k(i)})\goestoinfty{i}I_2$, we have $I_2 P^* \in H$ and hence $(P^*)^{-1}\in H$. Recalling $M_{k(i)}P^*\in H$, this implies $M_{k(i)}\in H$ for all $i \in \N$, which is a contradiction. It follows that there exists an index $i \in \N$ with $U_i \subset H$. But this implies that $H= \bigcup_{A \in H}U_iA$ is a union of open sets in $\SL^{\pm}(2,\R)$ and hence open in $\SL^{\pm}(2,\R)$. Since $H$ is also closed, $H$ contains the connected component of $I_2$ in $\SL^\pm(2,\R)$, which is $\SL(2,\R)$.
\end{proof}

The proof of Theorem~\ref{theorem:solution_to_planar_question} is now merely a matter of combining previous results.
\begin{proof}[Proof of Theorem~\ref{theorem:solution_to_planar_question}]
    We consider the random $\lambda$-Steiner process starting at $K$.
    
    Let $\lambda \in (0,\frac12)$ and let $G\leq\GL(2,\R)$ be closed. By Theorem~\ref{theorem:characterization_of_ellipses_and_triangles}, there exists no $\lambda$-invariant body $L^* \in \Ktt$. Since $G\leq \GL(2,\R)$, it follows that there exists also no $L^* \in \Ktt$ that is $S^\lambda$-invariant modulo $G$. Hence, $K$ is $S^\lambda$-errant modulo $G$ and, using Proposition~\ref{proposition:planar_lambda_steiner_symmetrization_is_compactly_recurrent}, Theorem~\ref{theorem:sufficient_condition_errant} implies that the random $\lambda$-Steiner process diverges almost surely modulo $G$.

    Let $\lambda =\frac12$ and let $G\leq\GL(2,\R)$ be closed. By Corollary~\ref{corollary:convergence_to_ball_for_lambda=1/2}, the random Steiner process starting at $K$ converges to any Euclidean disk $B$ with $\vol K=\vol B$ almost surely. Since $G$ contains $\{\Id\}$, the process almost surely converges modulo $G$.

    Finally, for $\lambda = 0$, the claim about almost sure convergence follows from Corollary~\ref{corollary:random_shaking_process_converges_to_any_triangle}, whereas the claim about almost sure divergence follows from Theorem~\ref{theorem:sufficient_condition_errant}, using  Proposition~\ref{proposition:planar_lambda_steiner_symmetrization_is_compactly_recurrent} and Proposition~\ref{proposition:if_there_exists_S0_invariant_body_containment_of_SL2}.
\end{proof}

\section{Concluding remarks}

Since Question~\ref{question:precise_question} makes sense for arbitrary $n\geq 1$, whereas Theorem~\ref{theorem:solution_to_planar_question} only addresses the case $n=2$, the most obvious question that has been left open in the preceding section is how our results might be extended to the case $n \geq 3$. Although the arguments in Section~\ref{section:compact_recurrence_of_planar_lambda_steiner_processes} are very specific to the planar case, it seems likely that the main result of this section holds in higher dimensions as well.

\begin{conjecture} \label{conjecture:lambda_steiner_symmetrization_is_compactly_recurrent}
    For every $\lambda \in [0,\frac12]$ and $n \geq 1$, $\lambda$-Steiner symmetrization is compactly recurrent in $\R^n$.
\end{conjecture}

If we assume the truth of Conjecture~\ref{conjecture:lambda_steiner_symmetrization_is_compactly_recurrent}, then as in the planar case, the criteria from Section~\ref{section:random_convergence_of_generalized_steiner_processes} resolve Question~\ref{question:precise_question} for all triples $(K,\lambda,G)$ where $K$ is $S^\lambda$-errant or $S^\lambda$-tethered modulo $G$. We know that every convex body in arbitrary dimension is $S^{\frac12}$-tethered modulo $\{\Id\}$. On the other hand, since the $n$-dimensional simplex fails to be $0$-invariant for $n \geq 3$, Lemma~\ref{lemma:sigma_invariance_and_sigma_universality} implies that there exists no $0$-invariant $n$-dimensional convex body for $n \geq 3$, and hence every $K \in \Knn$ is $S^0$-errant modulo $G$ for $n \geq 3$ and arbitrary closed subgroups $G$ of $\GL(n,\R)$. It remains unresolved what happens for $\lambda \in (0,\frac12)$; here we conjecture that no $\lambda$-invariant body exists or, equivalently, that the following characterization of ellipsoids holds.

\begin{conjecture} \label{conjecture:characterization_of_ellipsoids}
    Let $n \geq 3$, $K \in \Knn$ and $\lambda \in [0,\frac12]$ such that $K$ is $\lambda$-invariant. Then $\lambda=\frac12$ and $K$ is an ellipsoid.
\end{conjecture}

The main reason to believe  Conjecture~\ref{conjecture:characterization_of_ellipsoids} is the heuristic idea that $\lambda$-invariance should become a more restrictive property as $n$ increases (as witnessed by the simplex). Although it seems plausible that our arguments from Section~\ref{section:a_characterization_of_ellipses_and_triangles} might be extended to the case $n \geq 3$, additional technical difficulties are to be expected.
In this context, it makes sense to consider the following related question, which we have not even addressed in the planar setting.

\begin{question} \label{question:S_lambda_universal_body}
    Let $n \geq 2$ and $\lambda \in (0,\frac12)$. Does there exist an $S^\lambda$-universal body $K \in \Knn$?
\end{question}

If we could construct such a body for given $n$ and $\lambda$, then by Lemma~\ref{lemma:sigma_invariance_and_sigma_universality} it would be the only candidate for a $\lambda$-invariant body. In particular, such a construction in the planar case could render the arguments from Section~\ref{section:a_characterization_of_ellipses_and_triangles} obsolete. More generally, one might ask about the behavior of $\lambda$-Steiner symmetrization using finite sets of directions; for $\lambda=\frac12$, this topic was studied by Klain \cite{k12}.
Independently of the answer to Question~\ref{question:S_lambda_universal_body}, Conjecture~\ref{conjecture:lambda_steiner_symmetrization_is_compactly_recurrent} and Conjecture~\ref{conjecture:characterization_of_ellipsoids} imply the following conjectural answer to Question~\ref{question:precise_question}.

\begin{conjecture} \label{conjecture:conjectural_solution_to_precise_question}
    Let $n\geq 3$, $K \in \Knn$, $\lambda \in [0,\frac12]$ and let $G\leq \GL(n,\R)$ be closed. The random $\lambda$-Steiner process starting at $K$ converges modulo $G$ almost surely if and only if $\lambda=\frac12$.
    If $\lambda=\frac12$, the process converges modulo $G$ to (the equivalence class of) the Euclidean ball of volume $\vol K$. For all other values of $\lambda$, the process diverges modulo $G$ almost surely.
\end{conjecture}

Conjecture~\ref{conjecture:conjectural_solution_to_precise_question} asserts that in dimension $n \geq 3$, the convergence behavior of Steiner symmetrization is exceptional in the class of $\lambda$-Steiner symmetrization, whereas the behavior of shaking is generic, in contrast to the planar case. More broadly, the exact correspondence between Steiner symmetrization and shaking seems to be a peculiar feature of $\R^2$, and a natural counterpart to Steiner symmetrization seems to be unavailable in higher dimensions. This asymmetry makes the task of establishing reverse counterparts to isoperimetric-type inequalities very challenging. On the other hand, the exceptional nature of Steiner symmetrization might lead us to appreciate its usefulness in a new light. %

\subsection*{Acknowledgments} The research leading to these results is part of a project that has received funding from the European Research Council (ERC) under the European Union’s Horizon 2020 research and innovation programme (grant agreement No 101001677). The first-named author thanks the University of Szeged, where part of the research was done, for its hospitality.

This research was supported by NKFIH projects no.~150151 and no.~150613. Projects no.~150151 and no.~150613 have been implemented with the support provided by the Ministry of Culture and Innovation of Hungary from the National Research, Development and Innovation Fund, financed under the ADVANCED\_24 funding scheme.

The second author received funding from project no.~2024-1.2.8-T\'ET-IPARI-CN-2025-00011,
with the support provided by the Ministry of Innovation and Technology
of Hungary from the National Research, Development and Innovation Fund.

\subsection*{Statement on the use of artificial intelligence} The ideas presented in the article were developed by the authors without assistance from artificial intelligence. ChatGPT was used to search for references, to avoid terminological collisions with existing literature, and to correct errors in the manuscript.

    \printbibliography
\end{document}